\newcommand{\ubar}[1]{\underaccent{\bar}{#1}}
\theoremstyle{plain}
\newtheorem{theorem}{Theorem}
\newtheorem{proposition}{Proposition}[section]
\newtheorem{lemma}[proposition]{Lemma}
\newtheorem{corollary}[proposition]{Corollary}
\newtheorem{remark}[proposition]{Remark}
\theoremstyle{problem}
\newtheorem {problem}{Problem}
\numberwithin{equation}{section}
\def\Xint#1{\mathchoice
	{\XXint\displaystyle\textstyle{#1}}
	{\XXint\textstyle\scriptstyle{#1}}
	{\XXint\scriptstyle\scriptscriptstyle{#1}}
	{\XXint\scriptscriptstyle\scriptscriptstyle{#1}}
	\!\int}
\def\XXint#1#2#3{{\setbox0=\hbox{$#1{#2#3}{\int}$ }
		\vcenter{\hbox{$#2#3$ }}\kern-.6\wd0}}
\def\dashint{\Xint-}
\def\a{\alpha}
\def\b{\beta}
\def\c{\cdot}
\def\d{\delta}
\def\g{\gamma}
\def\G{\Gamma}
\def\ld{\lambda}
\def\Ld{\Lambda}
\def\n{\nabla}
\def\O{\Omega}
\def\bp{\mathbf{p}}
\def\pt{\partial}
\def\q{\quad}
\def\r{\rho}
\def\th{\theta}
\def\ul{\ubar}
\def\ur{\Upsilon}
\def\v{\epsilon}
\def\vp{\varphi}
\def\B{\mathcal{B}}
\def\D{\mathcal{D}}
\def\MG{\mathcal{G}}
\def\h{\mathfrak{h}}
\def\J{\mathcal{J}}
\def\K{\mathcal{K}}
\def\N{\mathcal{N}}
\def\MO{\mathcal{O}}
\def\bp{\mathbf{p}}
\def\R{\mathbb{R}}
\def\t{\mathfrak{t}}
\def\mn{\mathbf{n}}
\def\u{\mathbf{u}}
\begin{document}

\title[three-dimensional axisymmetric jet flows with large vorticity]{\bf The jet problem for three-dimensional axisymmetric compressible subsonic flows with large vorticity}

\author{Yan Li}
\address{School of Mathematical Sciences, Shanghai Jiao Tong University, 800 Dongchuan Road, Shanghai, 200240, China}
\email{liyanly@sjtu.edu.cn}

\begin{abstract}
	In this paper, we establish the existence of three-dimensional axisymmetric compressible jet flows for steady Euler system with large vorticity by using the variational method. More precisely, for given axial velocity of the flow at the upstream, if the mass flux is sufficiently large, we can find a unique outer pressure such that a smooth subsonic three-dimensional axisymmetric jet flow with large vorticity exists and has certain far fields behavior.
\end{abstract}

\keywords{axisymmetric flows, Euler system, free boundary, subsonic jet, vorticity.}
\subjclass[2010]{
	35Q31, 35R35, 35J20, 35J70, 35M32, 76N10}
\maketitle

\section{Introduction and main results}
In the 1980s, Alt, Caffarelli and Friedman developed a systematic variational method to study the Bernoulli type free boundary problems such as jets and cavities (\cite{AC81,ACF84,ACF85,F82_book_variational}).  
Based on this variational method, various physical models have been studied (cf. \cite{CDW2017,CDX2019,CDX2020_Rethy,CDZ2021,DW2021_two_axi,LSTX2023,L2023,L2023_axi_small} and references therein). 
In particular, the well-posedness of two-dimensional subsonic jet flows for steady Euler system with large vorticity was investigated in \cite{LSTX2023,L2023}. The existence of three-dimensional axisymmetric subsonic jet flows for steady full Euler system with nonzero vorticity was proved in \cite{L2023_axi_small}. In this paper, we study three-dimensional axisymmetric subsonic jet flows for steady Euler system with large vorticity.  

Three-dimensional steady isentropic ideal flows are governed by the following Euler system
\begin{equation}\label{Euler origional}
\begin{cases}\begin{split}
& \n\cdot(\r \mathbf{u})=0,\\
& \n \cdot(\r \mathbf{u}\otimes\mathbf{u})+\n p=0,
\end{split}\end{cases}
\end{equation}
where $\mathbf u=(u_1,u_2,u_3)$ denotes the flow velocity, $\r$ is the density, and $p$ is the pressure of the flow. For the polytropic gas, after nondimensionalization, the equation of state can be written as  $ p(\rho)=\rho^{\gamma}/\gamma$, where the constant $\gamma>1$ is called the adiabatic exponent. The local sound speed and the Mach number of the flow are defined as
\begin{equation}\label{eq:sound}
	c(\rho)=\sqrt{p'(\rho)}=\rho^{\frac{\gamma-1}{2}}\quad \text{and}\quad M=\frac{|\textbf{u}|}{c(\rho)},
\end{equation}
respectively. 
The flow is called subsonic if $M<1$, sonic if $M=1$, and supersonic if $M>1$, respectively.

We specialize to axisymmetric compressible flows and take the symmetry axis to be the $x_1$-axis. Denote $x=x_1$ and $y=\sqrt{x_2^2+x_3^2}$. Let $u$ and $v$ be the axial velocity and the radial velocity of the flow,  respectively. Assume that the swirl velocity of the flow is zero, then
\begin{equation*}
	\begin{cases}\begin{split}
			&u_1(x_1,x_2,x_3)=u(x,y),\\
			&u_2(x_1,x_2,x_3)=v(x,y)\frac{x_2}{y},\\
			&u_3(x_1,x_2,x_3)=v(x,y)\frac{x_3}{y}.
	\end{split}\end{cases}
\end{equation*}
Hence in cylindrical coordinates, the Euler system \eqref{Euler origional} can be rewritten as
\begin{equation}\label{Euler system}
	\begin{cases}\begin{split}
			&\n\c(y\r \u)=0,\\
			&\n\c(y\r \u\otimes \u)+y\n p=0,
	\end{split}\end{cases}
\end{equation}
where $\u=(u,v)$ and $\n=(\pt_x,\pt_y)$.

Now given an axisymmetric nozzle in $\R^3$ with the symmetry axis and the upper solid boundary of the nozzle expressed as
\begin{equation}\label{nozzle}
	\N_0:=\{(x, 0): x\in \mathbb{R}\}
	\quad \text{and}\quad	
	\N:=\{(x,y): x=N(y),\, y\in[1,\bar H)\},
\end{equation}
respectively, where $\bar{H}>1$, $N\in C^{1,\bar\a}([1,\bar{H}])$ for some  $\bar\a\in(0,1)$ and it satisfies
\begin{equation}\label{nozzle condition}
	N(1)=0 \q {\rm and} \q \lim_{y\to\bar{H}-}N(y)=-\infty.
\end{equation}

The main goal of this paper is to study the following jet problem.

\begin{problem}\label{probelm 1}
	Given a mass flux $Q>0$ and a positive axial velocity $\bar{u}(y)$ of the flow at upstream $x\to-\infty$, find $(\r,\u)$, the free boundary $\G$, and the outer pressure $\ul{p}$ which is assumed to be a constant, such that the following statements hold. 
	\begin{enumerate}
		\item[\rm (1)] The free boundary $\G$ joins the nozzle boundary $\N$ as a continuous surface and tends asymptotically horizontal at downstream as $x\to\infty$.
		
		\item[\rm (2)] The solution $(\r,\u)$ solves the Euler system \eqref{Euler system} in the flow region $\MO$ bounded by $\N_0$, $\N$, and $\Gamma$. It takes the incoming data at the upstream, i.e.,
		\begin{equation}\label{upstream condition}
			u(x,y)\to\bar u(y)  \q \text{as } x\to-\infty,
		\end{equation}
		and
		\begin{equation}\label{Q}
			\int_0^1 y(\r u)(0,y)dy=Q.
		\end{equation}
		Moreover, it satisfies the boundary conditions 
		\begin{equation*}
			p(\rho)=\ul{p} \text{ on } \G, \q\text{and}\q \u\c\mn=0  \text{ on } \N\cup \G,
		\end{equation*}
		where $\mn$ is the unit normal along $\N\cup\G$.
	\end{enumerate}
\end{problem}

The main results in this paper can be stated as follows.

\begin{theorem}\label{result}
	Let the nozzle boundary $\N$ defined in \eqref{nozzle} satisfy \eqref{nozzle condition} 
	and $Q>0$ be a constant. Assume that $\bar{u}\in C^{1,1}([0,\bar{H}])$ satisfies 
	\begin{equation}\label{ubar_condition}\begin{split}
			&\inf_{y\in[0,\bar H]}\bar u(y)>0,\q \lim_{y\to0}\frac{\bar u'(y)}{y}=0,\q
			\text{and }  \q
			0\leq \frac1y\left(\frac{\bar u'(y)}y\right)'<\infty  
			\text{ for } y\in(0,\bar H].
	\end{split}\end{equation}
	Then there exists a constant $Q^*=Q^*(\bar u,\g,\N)>0$ such that for any $Q>Q^*$, 
	there are functions $\rho,\mathbf{u}\in C^{1,\alpha}(\mathcal{O})\cap C^0(\overline{\mathcal{O}})$ (for any $\alpha\in(0,1)$) where $\mathcal O$ is the flow region, the free boundary $\G$, and the outer pressure $\ubar p$ such that $(\rho,\mathbf u,\Gamma,\ul p)$ solves Problem \ref{probelm 1}.  Furthermore, the following properties hold.
	\begin{itemize}
	\item[(i)] (Smooth fit) The free boundary $\G$ joins the nozzle boundary $\N$ as a $C^1$ surface.
	
	\item[(ii)] The free boundary $\Gamma$ is given by a graph $x=\Upsilon(y)$, $y\in (\ubar H, 1]$, where $\Upsilon$ is a $C^{2,\alpha}$ function, $\ubar H\in(0,1)$, and $\lim_{y\rightarrow \ubar H+} \Upsilon(y)=\infty$. For $x$ sufficiently large, the free boundary  can also be written as $y=f(x)$ for some $C^{2,\alpha}$ function $f$ which satisfies
		$$\lim_{x\rightarrow \infty}f(x)=\ubar H
		\quad\text{and}\quad  \lim_{x\rightarrow \infty}f'(x)=0.$$
	
	\item[(iii)] The flow is globally uniformly subsonic and has negative vertical velocity in the flow region $\MO$, i.e.,
	\begin{equation*}
		\sup_{\overline{\MO}}\frac{|\u|^2}{c^2}<1 \q\text{and}\q v<0 \ \text{in }\MO.
	\end{equation*}
	
		\item[(iv)] (Upstream and downstream asymptotics)
		There exist positive constants $\bar\rho$ and $\ubar{\rho}$, which are the upstream and downstream density respectively, and a positive function $\ubar {u}\in C^{1,\alpha}((0,\ubar H])$, which is the downstream axial velocity, such that 
		\begin{equation}\label{asymptotic upstream}
			\|(\r,\u)(x,\c)-(\bar{\r},\bar{u}(\c),0)\|_{C^{1,\alpha}((0,\bar{H}))}\to0, \q\text{as } x\to-\infty
		\end{equation}
		and
		\begin{equation}\label{asymptotic downstream}
			\|(\r,\u)(x,\c)-(\ul{\r},\ul{u}(\c),0)\|_{C^{1,\alpha}((0,\ul{H}))}\to0, \q\text{as } x\to\infty.
		\end{equation}
		Moreover, 
		\begin{align}\label{def_pbar_rhobar}
			\bar\r=\frac{Q}{\int_{0}^{\bar H}y\bar u(y)dy} 	\q\text{and}\q
			\ul\r=(\g \ul p)^{\frac1\g};
		\end{align}
		the downstream axial velocity $\ul{u}$ and height $\ul{H}>0$ are also  uniquely determined by $Q$, $\bar{u},\,\g,\,\bar H$, and $\ul p$.
	
		\item[(v)] (Uniqueness of the outer pressure) The outer pressure $\ubar p$ such that the solution $(\rho,\mathbf u,\Gamma)$ satisfies the properties (i)-(iv) is uniquely determined by $Q$, $\bar{u},\,\g$, and $\bar H$.
	\end{itemize}
\end{theorem}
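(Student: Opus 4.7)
My strategy is to convert Problem \ref{probelm 1} into a Bernoulli-type one-phase free boundary problem for an axisymmetric stream function, solve a truncated variational problem on a bounded subdomain using the Alt--Caffarelli--Friedman framework, establish smooth fit at the nozzle lip and the correct far-field asymptotics, and finally identify the outer pressure by a continuous-family argument.

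First, I would introduce a stream function $\psi$ by $\psi_y=y\r u$ and $\psi_x=-y\r v$, so that the continuity equation in \eqref{Euler system} is automatically satisfied. Using the Bernoulli law along streamlines, the density $\r$ becomes a single-valued smooth function of $|\n\psi|^2/y^2$ and $\psi$ as long as the flow stays strictly subsonic; combined with the vorticity-transport equation this encodes the large vorticity through a function $\K(\psi)$ built from the upstream datum $\bar u$ via the normalization $\psi(-\infty,y)=\int_0^y s\bar\r\bar u(s)\,ds$. The momentum system then reduces to a single quasilinear elliptic equation of the form
\begin{equation*}
	\pt_x\!\left(\frac{\psi_x}{y\r}\right)+\pt_y\!\left(\frac{\psi_y}{y\r}\right)=y\K(\psi),
\end{equation*}
degenerate on the axis $y=0$; assumption \eqref{ubar_condition} is exactly what is needed to make $\K$ continuous up to the axis and to control its sign.

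Next, for each candidate outer pressure $\ul p>0$ and truncation radius $R>0$, I would pose a variational problem on the truncated domain $\MO_R=\MO\cap\{|x|<R\}$ for the functional
\begin{equation*}
	J_R(\psi)=\int_{\MO_R}\left(\frac{|\n\psi|^2}{2y\r}+y\Phi(\psi)+\ld(\ul p)^2\,y\,\chi_{\{\psi<Q\}}\right)dx\,dy,
\end{equation*}
where $\Phi$ is a primitive of $\K$ and the Bernoulli parameter $\ld(\ul p)$ is chosen so that critical points realize the free-surface condition $p(\r)=\ul p$. After a subsonic truncation of $\r$ restores coercivity and lower semicontinuity, in the spirit of \cite{LSTX2023,L2023,DW2021_two_axi}, the direct method produces a minimizer $\psi_R$ with the appropriate Dirichlet data. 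The Alt--Caffarelli regularity theory then yields Lipschitz regularity of $\psi_R$, non-degeneracy near $\{\psi_R=0\}$, and $C^{1,\a}$ regularity of the free boundary away from the nozzle lip; the continuous-fit-to-smooth-fit upgrade at $(0,1)$ is carried out by a standard flatness/blow-up argument.

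Finally, uniform Lipschitz and non-degeneracy estimates on $\psi_R$ allow me to pass to $R\to\infty$. Bernoulli-type identities on vertical cross-sections force horizontal limits at both ends; mass conservation together with the Bernoulli law then uniquely determines $\ul\r$, $\ul u$ and $\ul H$ as in \eqref{def_pbar_rhobar}. Monotonicity and continuity of the resulting downstream height $\ul H(\ul p)$, combined with the intermediate value theorem, yield the unique $\ul p$ for which the free boundary meets the nozzle exactly at $y=1$; the largeness $Q>Q^*$ is what makes the admissible range of $\ul p$ non-empty and forces the flow to remain uniformly subsonic, so that the a priori sonic truncation is inactive. Property (iii), including $v<0$ in $\MO$, then follows from a maximum principle applied to $\psi_x$ using the asymptotic states at both ends. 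The main obstacle is to handle simultaneously the axis degeneracy at $y=0$, the nonlinear $\r$-dependence of the leading coefficient (which requires the sonic truncation and its a posteriori removal via the largeness of $Q$), and the smooth fit together with far-field asymptotics in the unbounded $R\to\infty$ limit.
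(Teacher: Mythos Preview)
Your overall architecture matches the paper's: stream-function reduction of \eqref{Euler system} to a quasilinear elliptic equation, subsonic truncation plus domain truncation, an ACF-type variational problem on $\Omega_{\mu,R}$, Lipschitz/nondegeneracy and free boundary regularity, then removing truncations and reading off the asymptotics. So the skeleton is right.

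Where your plan drifts from the paper is in how the correct $\ul p$ (equivalently the free-boundary momentum $\Lambda$) is selected, and how its uniqueness is established. You propose to detect the right $\ul p$ by monotonicity and continuity of the downstream height $\ul H(\ul p)$ and the intermediate value theorem, and you describe this as the condition ``the free boundary meets the nozzle exactly at $y=1$.'' These are two different things: continuous fit is about the initial point $\Upsilon(1)=0$ at the nozzle lip, not about the asymptotic height $\ul H$. The paper does not use $\ul H(\ul p)$ at all for this step; instead, at the \emph{truncated} level it shows that $\Upsilon_{\mu,R,\Lambda}(1)$ is continuous in $\Lambda$ and changes sign as $\Lambda$ crosses the range $[C^{-1}Q,\,CQ]$ (Lemma \ref{continuous fit}, Corollary \ref{Ld determine}), so some $\Lambda(\mu,R)$ achieves continuous fit before any limit is taken. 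Uniqueness of $\Lambda$ (hence of $\ul p$) is then proved separately, and by a different mechanism than monotonicity of $\ul H$: assuming two subsonic solutions with $\Lambda_1<\Lambda_2$, the paper orders the downstream states via the Bernoulli law, shifts one domain until the two free boundaries touch, applies the comparison principle inside (which needs the structural positivity \eqref{det_0} coming from \eqref{ubar_condition}), and finishes with the Hopf lemma at the touching point (Proposition \ref{uniqueness pro}). Your monotone-$\ul H$ route would still need such a comparison argument to justify strict monotonicity, so it is not really a shortcut.

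A second, smaller gap: for the far-field asymptotics you invoke ``Bernoulli-type identities on vertical cross-sections.'' The paper instead passes to limits $\psi(\cdot\pm n,\cdot)$ and identifies the limit with the unique solution of a one-dimensional problem; the uniqueness uses energy estimates that again hinge on the structural condition \eqref{det_0} (and, downstream, the uniqueness of the overdetermined Cauchy problem \eqref{system downstream}). It is precisely this convexity/determinant condition---not just the sign of $\mathcal K$---that \eqref{ubar_condition} is designed to produce, and your sketch does not isolate it. Also, minor: the nondegeneracy is near $\{\psi_R=Q\}$ (equivalently $\{Q-\psi_R>0\}$), not $\{\psi_R=0\}$.
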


\begin{remark}
The vorticity $\omega$ of the flow at the upstream is actually $-\bar u'$,  and the quantity $\omega/(y\rho)$ is a constant along each streamline (cf. the third equation in \eqref{eq:euler}). Therefore,  the vorticity could be large under the conditions of $\bar u$ in \eqref{ubar_condition}. 
This is the main difference between Theorem \ref{result} and \cite[Theorem 1]{L2023_axi_small}.
\end{remark}

\begin{remark}
For simplicity we consider isentropic jet flows in Theorem \ref{result}.  
The arguments in this paper also hold for three-dimensional axisymmetric non-isentropic jet flows with large vorticity. 
\end{remark}

The main idea of the proof for Theorem \ref{result} is essentially the same as that in \cite{L2023_axi_small}. From \cite{L2023_axi_small} we know  that, in the subsonic state, the Euler system can be reduced to an elliptic equation of the stream function, which is an Euler-Lagrange equation of an energy functional. Then we can describe the jet problem in terms of the stream function. In order to solve the jet problem, we introduce the subsonic and domain truncations, and then give appropriate boundary conditions for the truncated problems. The truncated problems have a  variational structure (cf. Section \ref{sec variational formulation}). With the help of the framework in \cite{ACF84,ACF85} and techniques in  \cite{LSTX2023,L2023_axi_small}, we obtain the existence of minimizers for the corresponding variational problems, the regularity of the free boundaries, and other good properties of the solutions. After removing the domain and subsonic truncations, we prove the existence of subsonic solutions to the original jet problem. Finally, using a shifting technique, we show that the outer pressure that makes the subsonic solutions satisfy all properties in Theorem \ref{result}, especially the far field behavior \eqref{asymptotic downstream}, is unique.

Compared with \cite{L2023_axi_small}, the difficulties in proving Theorem \ref{result} arise from the inhomogeneous term of the elliptic equation satisfied by the stream function (cf. Lemma \ref{EL}), which is not small as in \cite{L2023_axi_small}. Fortunately, under the conditions of $\bar u$ in \eqref{ubar_condition}, we can get the non-negativity of the inhomogeneous term and certain structural conditions of the elliptic equation (Proposition \ref{Gproperties pro}). 
The non-negativity of the inhomogeneous term (cf. \eqref{supportG}) enables us to derive a uniform estimate of solutions to the truncated problems (cf. Lemma \ref{psi0 sup-subsol} and Proposition \ref{psi monotonic}), which is crucial for avoiding the singularity of solutions near the symmetry axis. The structural conditions \eqref{det_0} allow us to prove the asymptotic behavior of solutions by energy estimates (cf. Proposition \ref{pro_asymptotic_upstream}) and prove the uniqueness of the outer pressure by the comparison principle (cf. Proposition \ref{uniqueness pro}).

The rest of the paper is organized as follows. In Section \ref{sec stream formulation}, we reformulate the Euler system and the jet problem in terms of the stream function. In Section \ref{sec variational formulation}, we give the variational formulation for the jet problem after the domain and subsonic truncations. In Section \ref{sec existence and regularity}, the existence and regularity of solutions to the truncated free boundary problems are obtained. Fine properties of the solutions such as the monotonicity property, the continuous fit and smooth fit of the free boundary, and some uniform estimates are established in Section \ref{sec fine properties}. In Section \ref{sec remove truncation}, we remove the   the domain and subsonic truncations, and then prove the far fields behavior of the solutions. Section \ref{sec uniqueness} is devoted to the uniqueness of the outer pressure.

In the rest of the paper, $(\pt_1,\pt_2)$ represents $(\pt_x,\pt_y)$, and for convention the repeated indices mean the summation.

\section{Stream function formulation and subsonic truncation}\label{sec stream formulation}

In this section, we use the stream function to reduce the Euler system \eqref{Euler system} to a {second order} quasilinear equation, which is elliptic in the subsonic region and becomes singular at the sonic state. Then we describe the jet problem in terms of the stream function. One of the main difficulties to solve the jet problem comes from the possible degeneracy of the quasilinear equation near the sonic state. Thus we introduce a subsonic truncation such that the equation is always uniformly elliptic after the truncation. 

\subsection{The equation for the stream function} 
We first give an equivalent formulation for the compressible Euler system. 
\begin{proposition}\label{prop:Euler_equivalent}
	Let ${\tilde\MO} \subset \mathbb{R}^2$ be the domain bounded by two streamlines $\N_0$ (the $x$-axis) and
	\[
	{\tilde \N} :=\{(x, y): x={\tilde N(y)}, \ubar H< y< \bar H\},\]
	where  $0<\ubar H< \bar H<\infty$ and $\tilde N:(\ubar H, \bar H)\rightarrow \R$  is a $C^1$ function with
	\[
	\lim_{y\to \bar H-}\tilde N(y) =-\infty\quad\text{and}\quad  \lim_{y\to \ubar H+} \tilde N(y) =\infty.
	\]
	Let $\rho:\overline{\tilde \MO}\rightarrow (0,\infty)$ and $\u=(u,v):\overline{\tilde\MO}\rightarrow \R^2$ be $C^{1,1}$ in $\tilde\MO$ and continuous up to $\pt\tilde\MO$ except finitely many points. Suppose that  $\u$ satisfies the slip boundary condition $\u\cdot \mn=0$ on $\partial\tilde\MO$, $(\rho, \u)$ satisfies the upstream asymptotics \eqref{asymptotic upstream} with a positive constant $\bar\rho$ and a positive function $\bar u\in C^{1,1}([0,\bar H])$. Moreover, suppose that
	\begin{equation}\label{v_negative}
		v< 0\quad \text{ in } \tilde\MO.
	\end{equation}
	Then $(\rho, \u)$ solves the Euler system \eqref{Euler system} in $\tilde\MO$ if and only if $(\rho, \u)$ satisfies
	\begin{equation}\label{eq:euler}
		\left\{
		\begin{aligned}
			&\nabla\cdot(y\rho \u)=0, \\
			&\u \cdot \nabla \mathscr{B}(\rho, \u) =0, \\
			&\u\cdot \nabla \left(\frac{\omega}{y\rho}\right) =0,
		\end{aligned}
		\right.
	\end{equation}
	where
	\begin{align}\label{def:euler}
		\mathscr{B}(\rho, \u):= \frac{|\u|^2}{2}+h(\rho),\quad \omega:=\pt_{x}v-\pt_{y}u,\quad\text{and}\quad h(\rho):=\frac{\rho^{\gamma-1}}{\gamma-1}
	\end{align}
	are the Bernoulli function, the vorticity, and the enthalpy of the flow, respectively.
\end{proposition}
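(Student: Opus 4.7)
The forward implication is a standard calculation. The mass equation is identical in both formulations, so only the Bernoulli and vorticity transport equations need to be derived from the momentum equation. Using mass conservation, the momentum equation reduces to the non-conservative form $(\u\c\n)\u + \n h(\r)=0$. The two-dimensional identity
\[
(\u\c\n)\u = \n\bigl(|\u|^2/2\bigr) + \o\,\u^\perp, \qquad \u^\perp:=(-v,u),
\]
then rewrites it as $\n\mathscr{B}(\r,\u) + \o\,\u^\perp = 0$. Dotting with $\u$ yields $\u\c\n\mathscr{B}=0$ since $\u\c\u^\perp\equiv 0$. Taking the two-dimensional scalar curl yields $\n\c(\o\u)=0$, which combined with the transport identity $\n\c(g\u)=y\r\,\u\c\n(g/(y\r))$ (valid whenever $\n\c(y\r\u)=0$) gives $\u\c\n(\o/(y\r))=0$.

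For the reverse implication, the hypothesis $v<0$ on $\tilde\MO$ implies $\u\neq 0$, so there is a unique $\lambda:\tilde\MO\to\R$ with
\[
\n\mathscr{B} + \o\,\u^\perp = \lambda\,\u^\perp,
\]
because $\u\c\n\mathscr{B}=0$ forces $\n\mathscr{B}+\o\u^\perp$ to be parallel to $\u^\perp$. The target is $\lambda\equiv 0$ in $\tilde\MO$, which is equivalent (by reversing the vector identity and multiplying by $y\r$) to the momentum equation $\n\c(y\r\,\u\otimes\u)+y\n p=0$.

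Taking the scalar curl of the defining identity of $\lambda$ yields $\n\c((\o-\lambda)\u)=0$. Applying the transport identity from the first paragraph and using $\u\c\n(\o/(y\r))=0$ then gives $\u\c\n(\lambda/(y\r))=0$, so $\lambda/(y\r)$ is transported along streamlines. Next, evaluating the defining identity of $\lambda$ in the upstream limit, the asymptotics \eqref{asymptotic upstream} give $\n\mathscr{B}\to(0,\bar u\bar u')$, $\o\to-\bar u'$, and $\u^\perp\to(0,\bar u)$; hence $\lambda\u^\perp\to 0$ and, since $\bar u>0$, $\lambda\to 0$ along every streamline reaching the upstream.

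The remaining step, which I anticipate to be the main technical obstacle, is the global argument that every point of $\tilde\MO$ lies on a streamline extending backward to $x\to-\infty$. This should rely on three ingredients: the negativity of $v$ (so backward trajectories strictly increase in $y$ and remain bounded above by $\bar H$), the slip conditions on the two streamline boundaries $\N_0$ and $\tilde\N$ (so backward trajectories cannot escape through them), and the upstream asymptotics $u\to\bar u>0$ at $-\infty$ (forcing $x\to-\infty$ along the backward trajectory, with $y$ converging to some upstream height $y^*\in(0,\bar H)$). Once this is established, the transport equation for $\lambda/(y\r)$ propagates $\lambda\equiv 0$ throughout $\tilde\MO$, completing the reverse implication.
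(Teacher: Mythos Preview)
Your argument is correct and follows the standard route for this type of equivalence. The paper itself omits the proof entirely, stating that it is essentially the same as \cite[Proposition~2.1]{LSTX2023} and \cite[Proposition~2.1]{L2023_axi_small}; your write-up matches what one finds in those references. The forward implication via the Lamb form $\nabla\mathscr{B}+\omega\,\u^\perp=0$ and the reverse implication via the auxiliary scalar $\lambda$ (showing $\lambda/(y\rho)$ is transported and vanishes at upstream) is exactly the expected mechanism, and your identification of the global streamline structure as the only nontrivial step is accurate --- the paper simply asserts this after the proposition (``the condition \eqref{v_negative} assures that through each point \ldots there is one and only one streamline, which is globally defined'').
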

The proof of Proposition \ref{prop:Euler_equivalent} is essentially the same as that in \cite[Proposition 2.1]{LSTX2023} and \cite[Proposition 2.1]{L2023_axi_small}, so we omit it here. Note that the condition \eqref{v_negative} assures that through each point in the domain $\tilde \MO$ there is one and only one streamline, which is globally defined in $\tilde\MO$.  

It follows from the continuity equation  (the first  equation in \eqref{eq:euler}) that there is a stream function $\psi$ satisfying
\begin{equation}\label{psi gradient}
	\n\psi=(-y\r v, y\r u).
\end{equation}
Besides, the Bernoulli law (the second equation in \eqref{eq:euler}) implies $\mathscr{B}(\rho,\mathbf{u})$ is conserved along each streamline. Now we show that $\mathscr{B}(\rho,\mathbf{u})$ can be expressed as a function of the stream function $\psi$ under the assumptions of Proposition \ref{prop:Euler_equivalent}.
\begin{proposition}\label{BS pro}
	Let an axial velocity $\bar u\in C^{1,1}([0,\bar H])$ satisfy \eqref{ubar_condition} and a mass flux $Q>0$. Suppose that $(\rho,
	\u)$ is a solution to the Euler system \eqref{Euler system} and satisfies the assumptions of Proposition \ref{prop:Euler_equivalent}. Then there is a function $\B:[0,Q]\to\R$, $\B\in C^{1,1}([0,Q])$ such that 
	\begin{equation}\label{Bpsi relation}
		\mathscr{B}(\r,\u)=\frac{|\n\psi|^2}{2y^2\r^2}+h(\r)=\B(\psi) \q \text{in }\tilde\MO.
	\end{equation}
	Denote
	\begin{equation}\label{k0}
		\kappa_0:=\|\bar u''\|_{L^{\infty}((0, \bar H])}+
		\left\|\frac1y\left(\frac{\bar u'}y\right)'\right\|_{L^{\infty}((0, \bar H])}.
	\end{equation}
	Then
\begin{equation}\label{eq:u0_eps0_B}
	0\leq \mathcal{B}'(z)\leq  \frac{\kappa_0}{\bar\rho} \quad{and}\quad
	0\leq \mathcal{B}''(z)\leq \frac{\kappa_0}{\bar\rho^2\inf_{y\in[0,\bar H]}\bar u(y)},\q\text{for } z\in (0,Q],
\end{equation}
	where
	\begin{equation}\label{eq:rhobar}
		\bar \rho =\frac{Q}{\int_0^{\bar H} y\bar u(y) dy}.
	\end{equation}
\end{proposition}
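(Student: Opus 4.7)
The plan is to exploit the fact that both the Bernoulli function $\mathscr B(\rho,\u)$ and the stream function $\psi$ are constant along streamlines, so $\mathscr B$ can be expressed as a function of $\psi$, and then to pin down this function explicitly from the upstream asymptotics \eqref{asymptotic upstream}.

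First I would verify the streamline structure. From \eqref{psi gradient} one has $\u\cdot\nabla\psi = -uy\rho v + vy\rho u = 0$, so $\psi$ is constant on every streamline. Combined with the second equation in \eqref{eq:euler}, which says $\u\cdot\nabla\mathscr B = 0$, and with the hypothesis $v<0$ in $\tilde\MO$, which by Proposition \ref{prop:Euler_equivalent} guarantees that each streamline is globally defined and extends back to $x=-\infty$, it follows that $\mathscr B$ and $\psi$ share level sets. Hence there is a well-defined function $\B$ such that \eqref{Bpsi relation} holds, provided $\psi$ is one-to-one on the upstream cross-section.

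Next I would read off $\B$ from the upstream profile. Normalizing $\psi=0$ on $\N_0$, the upstream stream function is
\begin{equation*}
\bar\psi(y):=\bar\rho\int_0^y s\,\bar u(s)\,ds,\qquad y\in[0,\bar H],
\end{equation*}
which by $\bar u>0$ is strictly increasing from $0$ to $Q$ (using \eqref{eq:rhobar}), hence has a $C^{1,1}$ inverse $\bar\psi^{-1}:[0,Q]\to[0,\bar H]$. I would then define
\begin{equation*}
\B(z):=\tfrac12\bar u(\bar\psi^{-1}(z))^2+h(\bar\rho),\qquad z\in[0,Q],
\end{equation*}
and, by passing to the limit $x\to-\infty$ in \eqref{Bpsi relation} using \eqref{asymptotic upstream}, conclude that this is the required function.

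For the derivative bounds I would differentiate directly. Setting $y=\bar\psi^{-1}(z)$ and using $\bar\psi'(y)=y\bar\rho\bar u(y)$, a short computation gives
\begin{equation*}
\B'(z)=\frac{\bar u'(y)}{y\bar\rho},\qquad \B''(z)=\frac{1}{\bar\rho^2\,\bar u(y)}\cdot\frac1y\left(\frac{\bar u'(y)}{y}\right)'.
\end{equation*}
The lower bounds $\B',\B''\geq 0$ follow from the two monotonicity conditions in \eqref{ubar_condition} (the second forces $\bar u'(y)/y$ to be nondecreasing in $y$, hence nonnegative, using $\lim_{y\to 0}\bar u'(y)/y=0$). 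The upper bounds come from $|\bar u'(y)/y|\leq\|\bar u''\|_{L^\infty}\leq \kappa_0$ (since $\bar u'(0)=0$ by the same limit condition, so $\bar u'(y)/y=y^{-1}\int_0^y\bar u''\,ds$) and from the direct estimate of $y^{-1}(\bar u'/y)'$ by $\kappa_0$, yielding precisely \eqref{eq:u0_eps0_B}.

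The main technical obstacle is the behavior near the symmetry axis $y=0$ (which corresponds to $z=0$): the formulas for $\B'$ and $\B''$ involve division by $y$, and one has to check that the resulting extensions are continuous up to $z=0$ and that $\B'$ is Lipschitz, so that $\B\in C^{1,1}([0,Q])$. This is exactly where the hypotheses $\lim_{y\to 0}\bar u'(y)/y=0$ and the $L^\infty$ bound on $y^{-1}(\bar u'/y)'$ in \eqref{ubar_condition} are used; together with $\bar u\in C^{1,1}$ they give the required regularity of $\B$, and the bound on $\B''$ already delivers the Lipschitz property of $\B'$ on $(0,Q]$, extending to the endpoint by continuity.
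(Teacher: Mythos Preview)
Your proposal is correct and follows essentially the same approach as the paper: both define $\mathcal B$ via the inverse of the upstream stream function $\bar\psi(y)=\bar\rho\int_0^y s\bar u(s)\,ds$ (the paper calls this inverse $\mathfrak h$), compute $\mathcal B'(z)=\bar u'(y)/(y\bar\rho)$ and $\mathcal B''(z)=\frac{1}{\bar\rho^2\bar u(y)}\cdot\frac{1}{y}\bigl(\frac{\bar u'}{y}\bigr)'$, and read off the sign and size estimates from \eqref{ubar_condition} and the definition of $\kappa_0$. Your bound $|\bar u'(y)/y|\le\|\bar u''\|_{L^\infty}$ via $\bar u'(0)=0$ and the mean value is a minor variant of the paper's pointwise inequality $0\le\bar u'(y)/y\le\bar u''(y)$ obtained from the monotonicity of $\bar u'/y$, but the conclusion is identical.
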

\begin{proof}
	In view of Proposition \ref{prop:Euler_equivalent}, the Bernoulli function $\mathscr{B}(\rho,\mathbf{u})$ is conserved along each streamline, which is globally well-defined in the flow region $\tilde\MO$. In particular, $\mathscr{B}(\rho, \mathbf{u})$ is uniquely determined by its value at the upstream. Let $\h(\psi;\bar\rho):[0,Q]\rightarrow [0,\bar H]$ be the position of the streamline at upstream where the stream function has the value $\psi$, i.e.,
	\begin{align}\label{h def}
		\psi=\bar\rho\int_0^{\h(\psi;\bar \rho)}y\bar u(y)\ dy,
	\end{align}
	where the upstream density $\bar\rho$ is uniquely determined by $Q$ and $\bar u$ from the upstream asymptotics \eqref{asymptotic upstream}, i.e. $\bar\rho$ satisfies \eqref{eq:rhobar}. The function $\h$ is well-defined as $\bar u>0$, moreover, differentiating \eqref{h def} with respect to $\psi$ one has 
	$$\h'(\psi;\bar\r)=\frac1{\bar\r\h(\psi;\bar\r)\bar u(\h(\psi;\bar\r))}.$$
	Since the Bernoulli function at the upstream is
	\begin{equation}\label{def:B_upstream}
		B(y):=\lim_{x\rightarrow -\infty} \mathscr{B}(\rho,\mathbf{u})(x,y)=\frac{\bar u^2(y)}{2}+h(\bar\rho) \quad\text{for }y\in[0,\bar H],
	\end{equation}
    using \eqref{psi gradient} one has 
	\begin{align}\label{eq:psi}
		\frac{|\nabla\psi|^2}{2y^2\rho^2}+h(\rho)=\mathcal{B}(\psi) \ \text{ in } \tilde\MO,
	\end{align}
	where the function $\mathcal B$ is defined as
	\begin{equation}\label{defB}
		\mathcal{B}(z):=B(\h(z;\bar\rho))=\frac{\bar u^2(\h(z;\bar\rho))}{2}+h(\bar\rho) \quad\text{for } z\in [0,Q].
	\end{equation}
	The straightforward computations give
	\begin{align}\label{eq:dB}
		\mathcal{B}'(z)=\frac{\bar u'(\h(z;\bar\rho))}{\bar\rho\h(z;\bar\r)} \quad\text{and}\quad  \mathcal{B}''(z)=\frac1
		{\bar\r^2\h^2(z;\bar\r)\bar u(\h(z;\bar\rho))}
		\left(\bar u''(\h(z;\bar\rho))-\frac{\bar u'(\h(z;\bar\rho))}{\h(z;\bar\rho)}\right).
	\end{align}
	Note that the conditions of $\bar u$ in \eqref{ubar_condition} imply $y(\bar u'(y)/y)'\geq0$. Hence $0\leq\bar u'(y)/y\leq \bar u''(y)$. Then  \eqref{eq:u0_eps0_B} follows directly from \eqref{eq:dB}, \eqref{ubar_condition}, and the definition of $\kappa_0$ in \eqref{k0}.
\end{proof}

For later purpose we extend the Bernoulli function $\mathcal{B}$ from $[0,Q]$ to $\R$ as follows:
firstly in view of \eqref{ubar_condition}, $\bar u$ can be extended to a $C^{1,1}$ function defined on  $\R$ (still denoted by $\bar u$) such that
\[
\bar u>0 \text{ on } \R, \quad {\bar u'= 0 \text{ on } (-\infty,0],}
\quad \bar u'\geq 0  \text{ on } [\bar H,\infty).
\]
Furthermore, the extension can be made such that
\begin{align}\label{ubar_extension}
	0<\bar u_\ast=\inf_{\R}\bar u\leq \bar u^*\leq \sup_{\R} \bar u =:\tilde u^\ast <\infty \quad\text{with} \quad \bar u_*:=\inf_{[0,\bar H]}\bar u, \quad \bar u^*:=\sup_{[0,\bar H]}\bar u,
\end{align}
where $\tilde u^\ast$ depends on $\bar u^*$ and $\|\bar u\|_{C^{1,1}([0,\bar H])}$, and
$$\|\bar u\|_{C^{1,1}(\R)}\leq C\|\bar u\|_{C^{1,1}([0,\bar H])}.$$
Consequently, using \eqref{defB} one naturally gets an extension of $\mathcal B$ to a $C^{1,1}$ function in $\R$ (still denoted by $\mathcal{B}$), which satisfies
\begin{equation}\label{eq:sign_B}
	{\mathcal{B}'(z)= 0 \text{ on } (-\infty, 0]}
	\quad \text{ and } \quad \mathcal{B}'(z)\geq 0 \text{ on } [Q,\infty).
\end{equation}
The function $\mathcal{B}$ is bounded from above and below, i.e.,
\begin{align}\label{eq:B}
	0<{B}_*\leq \mathcal{B}(z)\leq {B}^*<\infty,\quad z\in \R,
\end{align}
where $B_*$ and $B^*$ are defined as
\begin{equation}\label{defB*}
	B_*:=\frac12(\bar u_*)^2+h(\bar\rho),\quad  B^*:=\frac12(\tilde u^*)^2+h(\bar\rho).
\end{equation}
Moreover, 
\begin{equation*}\label{eq:B_derivative}
	\|\mathcal{B}'\|_{L^\infty(\R)}+\|\mathcal{B}''\|_{L^\infty(\R)}\leq C (\|\mathcal{B}'\|_{L^\infty((0, Q])}+\|\mathcal{B}''\|_{L^\infty((0, Q])})
\end{equation*}
where $C$ is independent of $\bar\rho$.
 
Let us digress for the study on the flow state with the given Bernoulli constant.
For the flow state with given Bernoulli constant $s$, the flow
density $\rho$ and flow speed $q$ satisfy
\begin{equation*}
	\frac{q^2}{2}+h(\rho)=s.
\end{equation*}
Therefore, the speed $q$ satisfies
\begin{equation*}
	q=\mathfrak q(\rho,s)= \sqrt{2(s-h(\rho))}.
\end{equation*}
For each {fixed $s$}, we denote the critical density and the maximum density by
\begin{equation}\label{defrhoc}
	\varrho_c(s):=\left\{\frac{2(\gamma-1)}{\gamma+1} s\right\}^{\frac{1}{\gamma-1}}\quad\text{and}\quad   \varrho_m(s) :=\left\{(\gamma-1)s\right\}^{\frac{1}{\gamma-1}},
\end{equation}
respectively. 
For states with given Bernoulli constant $s$, note that {$s-h(\rho)\geq 0$ for $\rho\leq \varrho_m(s)$. Thus the flow state $q=\mathfrak q(\rho,s)$ is well-defined when $\rho\leq \varrho_m(s)$.} The flow is subsonic (i.e. $\mathfrak q(\rho,s)<c(\rho)$, where $c(\rho)$ is the sound speed defined in \eqref{eq:sound}) if and only if $\varrho_c(s)<\rho\leq \varrho_m(s)$. At the critical density one has $\mathfrak q(\varrho_c(s),s)= c(\varrho_c(s))$. We denote the square of the momentum and {the square of}  critical momentum by
\begin{equation}\label{tc}
	\t(\rho,s):=\rho^2 \mathfrak q(\rho,s)^2=2\rho^2(s-h(\rho)) \ \text{ and }\ \t_c(s):=\t(\varrho_c(s),s)=\left\{\frac{2(\gamma-1)}{\gamma+1} s\right\}^{\frac{\gamma+1}{\gamma-1}}.
\end{equation}

\begin{remark}\label{rmk:Q}
	Suppose that the flow $(\rho, \mathbf{u})$ has the mass flux $Q$ and  satisfies the asymptotics \eqref{asymptotic upstream} at the upstream  with a constant density $\bar \rho$. Then $\bar \rho$ must be defined by \eqref{eq:rhobar}. 
	The flow is subsonic at the upstream if $\tilde u^* < c(\bar \rho)=\bar\rho^{\frac{\gamma-1}{2}}$, where $\tilde u^*$ is defined in \eqref{ubar_extension}. This can be guaranteed by letting
	\begin{equation}\label{def:Q_*}
		Q>\tilde Q,\quad \text{where}\quad
		\tilde Q:=(\tilde u^*)^{\frac2{\gamma-1}}\int_0^{\bar H}y\bar u(y)\ dy\geq Q_*:=(\bar u^*)^{\frac2{\gamma-1}}\int_0^{\bar H}y\bar u(y)\ dy.
	\end{equation}
	
	An immediate consequence of \eqref{def:Q_*} is that the upper and lower bounds of the Bernoulli function $\mathcal{B}$ are comparable, i.e.
	\begin{equation}\label{def:t_bound}
		0<B_\ast \leq \mathcal{B}(z)\leq B^\ast \leq \frac{\gamma+1}{2}B_\ast,
	\end{equation}
	where $B_\ast$ and $B^\ast$ are defined in \eqref{defB*}. Furthermore, $B_\ast$ (thus $B^\ast$) is comparable to $\bar\rho^{\gamma-1}$, i.e.
	\begin{equation}\label{eq:rhobar_B_*}
		(\gamma-1)^{-\frac{1}{\gamma-1}}\bar \rho \leq B_\ast^{\frac{1}{\gamma-1}}\leq \left(\frac{\gamma+1}{2(\gamma-1)}\right)^{\frac{1}{\gamma-1}}\bar\rho.
	\end{equation}
	As $Q=\bar\rho \|y\bar u\|_{L^1([0,\bar{H}])}$, the above inequality can also be reformulated as
	\begin{align}\label{label_7}
		(\gamma-1)^{-\frac{1}{\gamma-1}}\frac{Q}{\|y\bar u\|_{L^1([0,\bar H])}}\leq B_\ast^{\frac{1}{\gamma-1}}\leq\left(\frac{\gamma+1}{2(\gamma-1)}\right)^{\frac{1}{\gamma-1}}\frac{Q}{\|y\bar u\|_{L^1([0,\bar H])}}.
	\end{align}
\end{remark}

Now we have the following lemma on the representation of the density $\rho$ in terms of the stream function $\psi$ in the subsonic region.

\begin{lemma}\label{g}
Suppose the density function $\r$ and the stream function $\psi$ satisfy the Bernoulli law \eqref{eq:psi}. Then the following statements hold.
\begin{itemize}
	\item [(i)] The density function $\r$ can be expressed as a function of $|\n\psi/y|^2$ and $\psi$ in the subsonic region, i.e.,
	\begin{equation}\label{rho g}
		\r=\frac 1{g(|\frac{\n\psi}{y}|^2,\psi)}, \q \text{if } \r\in (\varrho_c(\B(\psi)),\varrho_m(\B(\psi))],
	\end{equation}
	where $\varrho_c$ and $\varrho_m$ are functions defined in \eqref{defrhoc}, and 
	$$g:\{(t,z):0\leq t<\mathfrak t_c(\B(z)),\, z\in \R\}\to \R$$ 
	is a function smooth in $t$ and $C^{1,1}$ in $z$ with $\mathfrak t_c$ defined in \eqref{tc}. Furthermore, 
	\begin{equation}\label{g bound}
	\frac{1}{\varrho_m(B^*)}=:g_*\leq g(t,z)\leq g^*:= \frac{1}{\varrho_c(B_*)}.
	\end{equation}
	\item [(ii)] The function $g$ satisfies the identity 
	\begin{equation}\label{dzg dtg}
	g^2(t,z)\pt_z g(t,z)=-2\B'(z)\pt_t g(t,z), \quad t\in[0,\mathfrak t_c(\B(z))), \ z\in\R.
	\end{equation}
\end{itemize}
\end{lemma}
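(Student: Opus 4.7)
The plan is to view the Bernoulli law \eqref{eq:psi} as an equation implicitly defining the density. Set $\sigma = 1/\rho$, $t = |\nabla\psi/y|^2$, and $z = \psi$; recalling $h(\rho) = \rho^{\gamma-1}/(\gamma-1)$, the identity \eqref{eq:psi} is equivalent to
\[
F(\sigma, t, z) := \frac{t\sigma^2}{2} + \frac{\sigma^{1-\gamma}}{\gamma-1} - \mathcal{B}(z) = 0,
\]
and I will invoke the implicit function theorem to solve $F = 0$ for $\sigma = g(t,z)$ on the subsonic branch.

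For part (i), the key computation is
\[
\partial_\sigma F = t\sigma - \sigma^{-\gamma} = \sigma\bigl(t - \sigma^{-\gamma-1}\bigr).
\]
When $F = 0$, the equation $\partial_\sigma F = 0$ reduces to $t = \rho^{\gamma+1} = \mathfrak{t}_c(\mathcal{B}(z))$, i.e.\ the sonic threshold from \eqref{tc}. On the subsonic branch $\rho \in (\varrho_c(\mathcal{B}(z)), \varrho_m(\mathcal{B}(z))]$ one has $\sigma < 1/\varrho_c(\mathcal{B}(z))$, hence $\sigma^{-\gamma-1} > \mathfrak{t}_c(\mathcal{B}(z)) > t$, and therefore $\partial_\sigma F < 0$. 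The implicit function theorem then provides a unique solution $\sigma = g(t,z)$ on $\{(t,z) : 0 \le t < \mathfrak{t}_c(\mathcal{B}(z)),\ z \in \mathbb{R}\}$. Smoothness of $g$ in $t$ and its $C^{1,1}$ regularity in $z$ are inherited from the corresponding regularity of $F$, which is real-analytic in $(\sigma, t)$ and $C^{1,1}$ in $z$ by Proposition \ref{BS pro}. Finally, \eqref{g bound} follows by translating the subsonic inclusion $\rho \in (\varrho_c(\mathcal{B}(z)), \varrho_m(\mathcal{B}(z))]$ together with $B_\ast \le \mathcal{B}(z) \le B^\ast$ from \eqref{def:t_bound} into reciprocal bounds on $g = 1/\rho$, namely $1/\varrho_m(B^\ast) \le g(t,z) < 1/\varrho_c(B_\ast)$.

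For part (ii), the method is straightforward implicit differentiation of $F(g(t,z), t, z) = 0$ separately in $t$ and $z$. Using $\partial_t F = g^2/2$ and $\partial_z F = -\mathcal{B}'(z)$ at $\sigma = g$, I obtain
\[
\partial_t g = -\frac{g^2/2}{tg - g^{-\gamma}}, \qquad \partial_z g = \frac{\mathcal{B}'(z)}{tg - g^{-\gamma}}.
\]
Eliminating the common nonvanishing denominator $tg - g^{-\gamma} = \partial_\sigma F$ between these two expressions immediately yields $g^2 \partial_z g = -2\mathcal{B}'(z) \partial_t g$, which is \eqref{dzg dtg}.

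No substantive obstacle arises. The only delicate point is the sign of $\partial_\sigma F$ on the subsonic branch, which simultaneously legitimizes the implicit function theorem and pins down the physically correct root; beyond that, the entire argument reduces to direct calculation.
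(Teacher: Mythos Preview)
Your proof is correct and follows essentially the same approach as the paper: both invert the Bernoulli relation on the subsonic branch via the implicit/inverse function theorem and then obtain \eqref{dzg dtg} by implicit differentiation. The only cosmetic difference is that you work directly with $\sigma=1/\rho$ and the function $F$, whereas the paper inverts $t=\mathfrak t(\rho,s)$ in the $\rho$-variable and converts to $g=1/\varrho$ at the end; the computations are line-by-line equivalent.
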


\begin{proof}
(i) From the expression \eqref{tc}, {the straightforward computations give}
\begin{align}\label{eq:dF}
	\pt_\rho \t(\rho,s)=4\rho\left(s-\frac{\gamma+1}{2}h(\rho)\right).
\end{align}
Now, one can see that with $\varrho_c$ and $\varrho_m$ defined in \eqref{defrhoc}, the following statements hold:
\begin{itemize}
	\item[(a)] $\rho\mapsto\t(\rho,s)$ achieves its maximum $\t_c(s)$ at $\rho=\varrho_{c}(s)$;
	\item[(b)] $\pt_\rho\t(\rho,s)<0$ when $\varrho_{c}(s)< \rho < \varrho_m(s)$.
\end{itemize}
Thus by the inverse function theorem, for each fixed $s>0$ and $\rho\in (\varrho_c(s), \varrho_m (s)]$, one can express $\rho$ as a function of $t:= \t(\rho,s)\in [0,\t_c(s))$ and $s$, i.e. $\rho=\rho(t,s)$. Let
\begin{equation} \label{eq:branch}
	g(t,z):=\frac{1}{\rho(t,\mathcal{B}(z))}, \quad t\in [0, \t_c(\B(z)))
\end{equation}
where $\mathcal{B}$ is the function defined in \eqref{defB}. The function $g$ is smooth in $t$ by the inverse function theorem and $C^{1,1}$ in $z$ by the $C^{1,1}$ regularity of $\mathcal{B}$ and the smooth dependence of $\rho$ on $s$. In view of the Bernoulli law \eqref{eq:psi}, one has \eqref{rho g} and \eqref{g bound}.

(ii) Let 
\begin{equation}\label{varrho_def}
	\varrho(t,z):=\rho(t,\mathcal{B}(z)).
\end{equation}
From \eqref{tc} and the Bernoulli law \eqref{eq:psi} one has
\begin{align}\label{eq:claim}
	t= 2\varrho(t,z)^2 (\mathcal{B}(z)-h(\varrho(t,z))).
\end{align}
Differentiating \eqref{eq:claim} with respect to $t$ gives
\begin{align}\label{eq:drhot}
	\frac{1}{2}=\frac{\pt_t\varrho}{\varrho}\left(2\varrho^2\mathcal{B}(z)-2\varrho^2h(\varrho)-\varrho^3h'(\varrho)\right)\stackrel{\eqref{eq:claim}}{=}\frac{\pt_t \varrho}{\varrho}\left(t-\varrho^{\gamma+1}\right),
\end{align}
and differentiating \eqref{eq:claim} with respect to $z$ gives
\begin{align}\label{eq40-1}
	-\varrho^2\mathcal{B}'(z)=\frac{\pt_z \varrho}{\varrho}\left(2\varrho^2\mathcal{B}(z)-2\varrho^2h(\varrho)-\varrho^3h'(\varrho)\right)\stackrel{\eqref{eq:claim}}{=}\frac{\pt_z\varrho}{\varrho}\left(t-\varrho^{\gamma+1}\right).
\end{align}
Combining  \eqref{eq:drhot} and \eqref{eq40-1} one has
\[
\pt_z \varrho(t,z)=-2\varrho(t,z)^2\pt_t\varrho(t,z) \mathcal{B}'(z).
\]
This together with \eqref{eq:branch} yields \eqref{dzg dtg}. 
\end{proof}

Now we are in a position to formulate the Euler system into a quasilinear equation of the stream function $\psi$ in the subsonic region.

\begin{lemma}\label{quasilinearequ lem}
Let $(\r,\u)$ be a solution to the system \eqref{eq:euler}.  Assume $(\r,\u)$ satisfies the assumptions in Proposition \ref{prop:Euler_equivalent}. Then in the subsonic region $|\n\psi/y|^2<\t_c(\B(\psi))$, the stream function $\psi$ solves
\begin{equation}\label{elliptic equ}
\n\c\bigg(g\bigg(\left|\frac{\n\psi}{y}\right|^2,\psi\bigg)\frac{\n\psi}{y}\bigg)
=\frac{y\B'(\psi)}{g(|\frac{\n\psi}{y}|^2,\psi)},
\end{equation}
where $g$ is defined in \eqref{eq:branch} and $\B$ is the Bernoulli function defined in \eqref{defB}. Moreover, the equation \eqref{elliptic equ} is elliptic if and only if $|\n\psi/y|^2<\t_c(\B(\psi))$.
\end{lemma}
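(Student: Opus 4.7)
The proof splits naturally into two parts: deriving equation \eqref{elliptic equ} from the Euler system, and verifying its ellipticity characterization.

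\textbf{Deriving the PDE.} My starting point is the vorticity transport equation, i.e.\ the third equation in \eqref{eq:euler}. Combined with the hypothesis \eqref{v_negative} (which, via Proposition \ref{prop:Euler_equivalent}, guarantees every streamline extends back to the upstream), it shows $\omega/(y\r)$ is constant along streamlines and therefore a function of $\psi$ alone. I would evaluate this function at the upstream: the asymptotics \eqref{asymptotic upstream} give $\omega|_{x\to-\infty}=-\bar u'(y)$ and $\r\to\bar\r$, while the identification $y=\h(\psi;\bar\r)$ from \eqref{h def} combined with \eqref{eq:dB} yields
\[
\frac{\omega}{y\r}\bigg|_{x\to-\infty} = -\frac{\bar u'(\h(\psi;\bar\r))}{\bar\r\,\h(\psi;\bar\r)} = -\B'(\psi).
\]
Propagating along streamlines, $\omega=-y\r\,\B'(\psi)$ throughout $\tilde{\MO}$. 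Next, I substitute the stream-function representation of the velocity: \eqref{psi gradient} combined with $\r=1/g(|\n\psi/y|^2,\psi)$ from \eqref{rho g} gives $u=g\,\pt_y\psi/y$ and $v=-g\,\pt_x\psi/y$ in the subsonic region, whence a direct differentiation yields $\omega=\pt_x v-\pt_y u=-\n\cdot(g\,\n\psi/y)$. Equating with $-y\r\B'(\psi)=-(y/g)\B'(\psi)$ produces exactly \eqref{elliptic equ}.

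\textbf{Ellipticity.} I would expand the divergence in \eqref{elliptic equ} and collect the terms with second derivatives of $\psi$; the principal symbol has coefficient matrix
\[
A_{ij} = \frac{g}{y}\d_{ij} + \frac{2\,\pt_t g}{y^3}\,\pt_i\psi\,\pt_j\psi,
\]
with eigenvalues $\lambda_\perp=g/y$ transverse to $\n\psi$, and $\lambda_\parallel=g/y+2(\pt_t g)|\n\psi|^2/y^3$ parallel to $\n\psi$. The first is positive because $g=1/\r>0$. For the second, I use \eqref{eq:claim}--\eqref{eq:drhot} to derive $\pt_t g = 1/[2\r^3(c^2-q^2)]$, and note that by \eqref{psi gradient} one has $|\n\psi|^2/y^2=\r^2 q^2=\t(\r,\B(\psi))$. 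Substituting collapses the parallel eigenvalue to $\lambda_\parallel=c^2/[\r y(c^2-q^2)]$. Both eigenvalues are strictly positive precisely when $q^2<c^2$, which by the monotonicity of $\r\mapsto\t(\r,\B(\psi))$ on the subsonic branch established in the proof of Lemma \ref{g} is equivalent to $|\n\psi/y|^2=\t(\r,\B(\psi))<\t_c(\B(\psi))$; moreover as $|\n\psi/y|^2\to \t_c(\B(\psi))$ one has $\pt_t g\to\infty$ and $\lambda_\parallel\to\infty$, showing loss of ellipticity at the sonic state. Both directions of the claimed equivalence follow.

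\textbf{Main obstacle.} The computations themselves are largely routine bookkeeping with the axisymmetric $y$-factor, so the only real subtlety is the identification of the right-hand side: namely, that $\omega/(y\r)$ agrees \emph{globally} in $\tilde{\MO}$ with its upstream value $-\B'(\psi)$. This is precisely where \eqref{v_negative} enters, ensuring a globally well-defined streamline through every point, and where \eqref{eq:dB} is invoked to convert the upstream expression into $-\B'(\psi)$.
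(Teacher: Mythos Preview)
Your proposal is correct and follows essentially the same line as the paper's proof: identify $\omega/(y\rho)$ with $-\B'(\psi)$ via the upstream asymptotics and \eqref{eq:dB}, then rewrite $\omega$ as $-\nabla\cdot(g\,\nabla\psi/y)$ using \eqref{rho g}. For the ellipticity, the paper computes the same two eigenvalues (up to a harmless positive factor of $y$) and invokes the identity $\pt_t g=-g^2/\pt_\rho\t$ from \eqref{dtg drF}, which is equivalent to your explicit formula $\pt_t g=1/(2\rho^3(c^2-q^2))$.
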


\begin{proof}
	Let $\theta(s;X)$ be the streamlines satisfying
	\begin{equation*}
		\left\{
		\begin{aligned}
			& \frac{d\theta}{ds} =\mathbf{u}(\theta(s;X)),\\
			& \theta (0; X)=X.
		\end{aligned}
		\right.
	\end{equation*}
	It follows from the third equation in \eqref{eq:euler} that $\omega/(y\rho)$ is a constant along each streamline. Hence $\omega/(y\rho)$ can be determined by the associated data at the upstream as long as the streamlines of the flows have simple topological structure, which is guaranteed by the assumption $v<0$. If $X\in \{\psi=z\}$, then
	\begin{align*}
		\frac{\omega}{y\rho}(X)=\lim_{s \rightarrow -\infty}\frac{\omega}{y\rho}(\theta(s;X)) =-\frac{\bar u'(\h(z;\bar \rho))}{\h(z;\bar \rho)\bar\rho}.
	\end{align*}
	Expressing the vorticity $\omega$ in terms of the stream function $\psi$ and using \eqref{eq:dB} one has
	\begin{align*}
		-\nabla\cdot \left(\frac{\nabla \psi}{y\rho} \right)=\omega = -y\r\mathcal{B}'(\psi).
	\end{align*}
    In view of \eqref{rho g} the above equation can be rewritten into \eqref{elliptic equ}.

The equation \eqref{elliptic equ} can be written in the nondivergence form as follows
$$\mathfrak  a^{ij}\bigg(\frac{\n\psi}{y},\psi\bigg)\pt_{ij}\psi+\mathfrak b\bigg(\frac{\n\psi}{y},\psi\bigg)=\frac{y^2\B'(\psi)}{g(|\frac{\n\psi}{y}|^2,\psi)},$$
where the matrix
$$(\mathfrak a^{ij}):=g\bigg(\left|\frac{\n\psi}{y}\right|^2,\psi\bigg)I_{2}+2\pt_t g\bigg(\left|\frac{\n\psi}{y}\right|^2,\psi\bigg)\frac{\n\psi}{y}\otimes\frac{\n\psi}{y}$$
is symmetric with the eigenvalues
$$\b_0=g\bigg(\left|\frac{\n\psi}{y}\right|^2,\psi\bigg) \q{\rm and}\q \b_1=g\bigg(\left|\frac{\n\psi}{y}\right|^2,\psi\bigg)+2\pt_tg\bigg(\left|\frac{\n\psi}{y}\right|^2,\psi\bigg)\left|\frac{\n\psi}{y}\right|^2,$$
and
$$\mathfrak b=\pt_zg\bigg(\left|\frac{\n\psi}{y}\right|^2,\psi\bigg)|\nabla \psi|^2-\frac{\pt_y\psi}{y}\bigg[g\bigg(\left|\frac{\n\psi}{y}\right|^2,\psi\bigg)+2\pt_tg\bigg(\left|\frac{\n\psi}{y}\right|^2,\psi\bigg)\left|\frac{\n\psi}{y}\right|^2\bigg].
$$
Besides, differentiating the identity $t=\t(\frac{1}{g(t,z)},\B(z))$ gives
\begin{equation}\label{dtg drF}
\pt_tg(t,z)=-\frac{g^2(t,z)}{\pt_\rho\t(\frac1{g(t,z)},\B(z))} \q\text{for } t\in[0,\t_c(\B(z))).
\end{equation}
This implies
$$\pt_tg(t,z)\geq0 \q {\rm and}\q \lim_{t\to\t_c(\B(z))-}\pt_tg(t,z)=\infty.$$
Thus $\beta_0$ has uniform upper and lower bounds, and $\beta_1$ has a uniform lower bound but blows up when $|\nabla\psi/y|^2$ approaches $\t_c(\B(\psi))$. 
Therefore, the equation \eqref{elliptic equ} is elliptic as long as $|\n\psi/y|^2< \t_c(\B(\psi))$, and is singular when $|\n\psi/y|^2= \t_c(\B(\psi))$. This completes the proof of the lemma.
\end{proof}

\subsection{Reformulation for the jet flows in terms of the stream function}
Let 
\begin{equation}\label{Ld}
	\Lambda:=
	{\ul\rho\sqrt{2B(\bar H)-2h(\ul\rho)}} \q \text{with } \ul\rho :=(\gamma\ul p)^{1/\gamma}
\end{equation}
be the constant momentum on the free boundary, where {$B$ is defined in \eqref{def:B_upstream} and} $\ul p$ is the pressure on the free boundary as in Problem \ref{probelm 1}. From previous analysis, Problem \ref{probelm 1} can be solved as long as the following problem in terms of the stream function $\psi$ is solved.
\begin{problem}\label{pb2}
	Given a mass flux $Q>0$ and an axial velocity $\bar u\in C^{1,1}([0,\bar H])$ satisfying \eqref{ubar_condition}. One looks for a triple $(\psi, \Gamma_\psi, \Lambda)$ satisfying
	$\psi\in C^{2,\alpha}(\{\psi<Q\})\cap C^{1}(\overline{\{\psi<Q\}})$ for any $\alpha\in(0,1)$, $\partial_{x}\psi>0$ in $\{0<\psi<Q\}$ and
	\begin{equation}\label{eq:pb2}
		\left\{
		\begin{aligned}
			&\n\c\bigg(g\bigg(\left|\frac{\n\psi}{y}\right|^2,\psi\bigg)\frac{\n\psi}{y}\bigg)=\frac{y\B'(\psi)}{g(|\frac{\n\psi}{y}|^2,\psi)} &&\text{ in } \{0<\psi<Q\},\\
			&\psi =0 &&\text{ on } \N_0,\\
			&\psi =Q &&\text{ on } \N \cup \Gamma_\psi,\\
			&\left|\frac{\nabla \psi}y\right| =\Lambda &&\text{ on } \Gamma_\psi,
		\end{aligned}
		\right.
	\end{equation}
	where the free boundary $\Gamma_\psi:=\pt\{\psi<Q\}\backslash \N$.  Furthermore, the free boundary $\Gamma_\psi$ and the flow 
	\begin{align}\label{def:rhou_psi}
	(\r,\u)=\left(\frac1{g(|\n\psi/y|^2,\psi)},\frac{g(|\n\psi/y|^2,\psi)}{y}\pt_y\psi,-\frac{g(|\n\psi/y|^2,\psi)}{y}\pt_x\psi\right)
	\end{align}
	are expected to satisfy  the following properties.
	\begin{enumerate}
	
	\item The free boundary $\Gamma_\psi$ is given by a graph $x=\Upsilon(y)$, $y\in (\ubar{H}, 1]$ for some $C^{2,\alpha}$ function $\Upsilon$ and some $\ubar{H}\in (0,1)$.
	
	\item The free boundary $\G_\psi$ fits the nozzle at $A=(0,1)$ continuous differentiably, i.e., $\Upsilon(1)=N(1)$ and $\Upsilon'(1)=N'(1)$.
	
	\item For $x$ sufficiently large, the free boundary is also an $x$-graph, i.e., it can be written as $y=f(x)$ for some $C^{2,\alpha}$  function $f$. Furthermore, one has
	$$\lim_{x\to\infty}f(x)=\ul{H} \q \text{and}\q \lim_{x\to\infty}f'(x)=0.$$
	
	  \item The flow is subsonic in the flow region, i.e., $|\n\psi/y|^2<\t_c(\B(\psi))$ in $\{\psi<Q\}$, where $\t_c$ is defined in \eqref{tc} and the Bernoulli function $\B$ is defined in \eqref{defB}.
	
	\item  At the upstream the flow satisfies the asymptotic behavior \eqref{asymptotic upstream}, where the upstream density $\bar\rho$ is given by \eqref{eq:rhobar}. At the downstream the flow satisfies
	$$\lim_{x\to\infty}(\r,\u)=(\ul{\r},\ul{u},0)$$
	for some positive constant $\ul\rho$ and positive function $\ul u=\ul u(x_2)$.
	\end{enumerate}
\end{problem}

\begin{remark}\label{rmk_pb2}
	If we find a solution $\psi$ which solves Problem \ref{pb2}, then we obtain a jet flow $(\rho,\mathbf u)\in (C^{1,\alpha}(\{\psi<Q\})\cap C^{0}(\overline{\{\psi<Q\}}))^2$ by \eqref{def:rhou_psi}. The flow $(\rho,\mathbf u)$ is actually a solution of the Euler system \eqref{Euler system}, even if $\rho$ and $\mathbf u$ are not $C^{1,1}$ in the flow region as required in Proposition \ref{prop:Euler_equivalent}. For the proof we refer to \cite[Proposition 2.5]{LSTX2023} and \cite[Proposition 2.7]{L2023_axi_small}. 
\end{remark}

\subsection{Subsonic truncation}\label{subsec_subsonic}
The equation of the stream function $\psi$ in \eqref{eq:pb2} becomes degenerate when the flows approach the sonic state (cf. Lemma \ref{quasilinearequ lem}). In order to deal with this possible degeneracy, we introduce the following subsonic truncation. 

Let $\varpi:\R\to[0,1]$ be a smooth nonincreasing function such that
$$\varpi(s)=\begin{cases}
	1& \text{ if } s\leq-1,\\
	0& \text{ if } s\leq -\frac12,
\end{cases}
\q
{\rm and}\q
|\varpi'|+|\varpi''|\leq8.$$
For $\epsilon\in (0,1/4)$,
let $\varpi_\epsilon(s):=\varpi((s-1)/\epsilon)$. We define $g_\epsilon:[0,\infty)\times \R\rightarrow \R$,
\begin{equation}\label{eq:truncation_g}
	g_\epsilon(t,z):= g(t, z)\varpi_\epsilon(t/\t_c(\B(z)))+ (1-\varpi_\epsilon(t/\t_c(\B(z))))g^* ,
\end{equation}
where $\t_c$ is defined in \eqref{tc} and $g^*$ is the upper bound for $g$ in \eqref{g bound}. The properties of $g_\epsilon$ are summarized in the following lemma.

\begin{lemma}\label{lem:truncation_g}
Let $g$ be the function defined in Lemma \ref{g} and let $g_\epsilon$ be the subsonic truncation of $g$ defined in \eqref{eq:truncation_g}. Then the function $g_\epsilon(t, z)$ is smooth with respect to $t$ and {$C^{1,1}$} with respect to $z$. Furthermore, under the assumption \eqref{def:Q_*}, there exist  positive constants $c_\ast$ and $c^\ast$ depending only on $\gamma$, such that for all $(t,z)\in [0,\infty)\times \R$
\begin{align}
	&c_\ast B_*^{-\frac{1}{\gamma-1}} \leq g_\epsilon(t,z)\leq c^\ast B_*^{-\frac{1}{\gamma-1}},\,\label{eq:g}\\
	&c_\ast B_*^{-\frac{1}{\gamma-1}}\leq g_\epsilon(t, z)+2\partial_t g_\epsilon(t, z) t\leq c^\ast \epsilon^{-1}B_*^{-\frac{1}{\gamma-1}},\label{beta1eps}\\
	&|\pt_zg_\epsilon(t,z)|\leq c^\ast\kappa_0\epsilon^{-1}B_\ast^{-\frac{\gamma+1}{\gamma-1}},\quad  |t\pt_z g_\epsilon(t,z)|\leq c^\ast \kappa_0\epsilon^{-1}.
	\label{eq:dzg}
\end{align}
\end{lemma}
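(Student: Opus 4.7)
My plan is to establish regularity and the pointwise bound \eqref{eq:g} by direct inspection of the defining formula \eqref{eq:truncation_g}, and then to derive the derivative bounds \eqref{beta1eps} and \eqref{eq:dzg} by differentiating \eqref{eq:truncation_g} and quantifying how $\partial_t g$ (and hence $\partial_z g$ via \eqref{dzg dtg}) diverges as $t\uparrow \mathfrak{t}_c(\mathcal B(z))$.

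For the regularity, Proposition \ref{BS pro} gives $\mathcal B\in C^{1,1}(\mathbb R)$ and $\mathfrak{t}_c$ is smooth, so the argument $t/\mathfrak{t}_c(\mathcal B(z))$ of $\varpi_\epsilon$ is smooth in $t$ and $C^{1,1}$ in $z$. On $\{\varpi_\epsilon>0\}$ one has $t\leq (1-\epsilon/2)\mathfrak{t}_c(\mathcal B(z))$, so $g$ is defined with the regularity given by Lemma \ref{g}, while off this set $g_\epsilon\equiv g^*$. Hence $g_\epsilon$ is smooth in $t$ and $C^{1,1}$ in $z$ on $[0,\infty)\times\mathbb R$. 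The bound \eqref{eq:g} then follows from the fact that $g_\epsilon$ is a convex combination of $g$ and $g^*$, hence lies in $[g_*,g^*]$ by \eqref{g bound}; the explicit formulas in \eqref{defrhoc} together with the comparison $B^*\leq\frac{\gamma+1}{2}B_*$ from \eqref{def:t_bound} (valid under \eqref{def:Q_*} by Remark \ref{rmk:Q}) show $g_*=1/\varrho_m(B^*)$ and $g^*=1/\varrho_c(B_*)$ are both comparable to $B_*^{-1/(\gamma-1)}$ with $\gamma$-dependent constants.

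For \eqref{beta1eps}, setting $s:=t/\mathfrak{t}_c(\mathcal B(z))$ and differentiating \eqref{eq:truncation_g} in $t$ I would get
\begin{equation*}
g_\epsilon+2t\partial_t g_\epsilon=\varpi_\epsilon(g+2t\partial_t g)+(1-\varpi_\epsilon)g^*+\frac{2s(g-g^*)}{\epsilon}\varpi'\!\left(\frac{s-1}{\epsilon}\right).
\end{equation*}
By \eqref{dtg drF} one has $\partial_t g\geq 0$ in the subsonic branch, while $g\leq g^*$ and $\varpi'\leq 0$, so every summand above is nonnegative and the lower bound $\geq g_*$ is clear. For the upper bound the last summand is $O(\epsilon^{-1}g^*)$ directly from $|\varpi'|\leq 8$, so the work lies in bounding $t\partial_t g$ on $\{\varpi_\epsilon>0\}$, where $\mathfrak{t}_c-t\geq \epsilon\mathfrak{t}_c/2$. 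Using \eqref{eq:dF} and a Taylor expansion at the critical point $\rho=\varrho_c(s)$ (at which $\partial_\rho\mathfrak{t}$ vanishes and $\partial_\rho^2\mathfrak{t}=-4(\gamma-1)s$), I would derive $|\partial_\rho\mathfrak{t}|\gtrsim\sqrt{\varrho_c^{\gamma-1}(\mathfrak{t}_c-t)}$ on the full subsonic branch; combined with $\varrho_c^{\gamma-1}\sim B_*$, $\mathfrak{t}_c\sim B_*^{(\gamma+1)/(\gamma-1)}$, and $\partial_t g=g^2/(-\partial_\rho\mathfrak{t})$, this produces $t\partial_t g\lesssim B_*^{-1/(\gamma-1)}/\sqrt{\epsilon}$.

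For \eqref{eq:dzg}, differentiating \eqref{eq:truncation_g} in $z$ yields
\begin{equation*}
\partial_z g_\epsilon=\varpi_\epsilon\partial_z g-\frac{s(g-g^*)}{\epsilon}\varpi'\!\left(\frac{s-1}{\epsilon}\right)\frac{\mathfrak{t}_c'(\mathcal B(z))\mathcal B'(z)}{\mathfrak{t}_c(\mathcal B(z))};
\end{equation*}
I would then substitute $\partial_z g=-2\mathcal B'(z)\partial_t g/g^2$ from \eqref{dzg dtg}, insert $|\mathcal B'|\lesssim \kappa_0 B_*^{-1/(\gamma-1)}$ from \eqref{eq:u0_eps0_B} and \eqref{eq:rhobar_B_*}, the bound on $\partial_t g$ from the previous paragraph, and the identity $\mathfrak{t}_c'/\mathfrak{t}_c\sim B_*^{-1}$, and keep track of the powers of $B_*$ to obtain $|\partial_z g_\epsilon|\lesssim \kappa_0\epsilon^{-1}B_*^{-(\gamma+1)/(\gamma-1)}$. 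Multiplying by $t\leq \mathfrak{t}_c\lesssim B_*^{(\gamma+1)/(\gamma-1)}$ then gives $|t\partial_z g_\epsilon|\lesssim \kappa_0\epsilon^{-1}$. The principal obstacle throughout is the quantitative sonic-blowup estimate for $\partial_t g$; once that is in hand, the remaining content is elementary differentiation together with bookkeeping via the explicit formulas \eqref{defrhoc}--\eqref{tc} and the cutoff estimates $|\varpi'|+|\varpi''|\leq 8$.
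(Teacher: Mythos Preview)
Your proposal is correct and follows the same overall architecture as the paper: regularity and \eqref{eq:g} from the convex-combination formula, \eqref{beta1eps} from differentiating \eqref{eq:truncation_g} and controlling $\partial_t g$ on the cutoff region, and \eqref{eq:dzg} by reducing $\partial_z g$ to $\partial_t g$ via \eqref{dzg dtg}. The one substantive difference is how you bound $\partial_t g$ near the sonic state. The paper does \emph{not} use a Taylor expansion; instead it rewrites $\partial_\rho\mathfrak t(\rho,s)=2(\gamma+1)\rho\bigl(h(\varrho_c(s))-h(\rho)\bigr)$ and uses the elementary inequality
\[
\partial_\rho\mathfrak t(\rho,s)=(\gamma+1)\left(\frac{\mathfrak t}{\rho}-\frac{\rho\,\mathfrak t_c}{\varrho_c^2}\right)\leq (\gamma+1)\,\frac{\mathfrak t-\mathfrak t_c}{\rho}\leq -\frac{(\gamma+1)\epsilon\,\mathfrak t_c}{2\rho}
\]
on $\{t\leq(1-\epsilon/2)\mathfrak t_c\}$, which via \eqref{dtg drF} gives the clean linear bound $0<\partial_t g\leq C_\gamma g/(\epsilon\,\mathfrak t_c(B_*))$ and hence $t\partial_t g\leq C_\gamma g^*\epsilon^{-1}$. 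Your route through second-order information at $\varrho_c$ yields a sharper rate $O(\epsilon^{-1/2})$, but to make the inequality $|\partial_\rho\mathfrak t|\gtrsim\sqrt{\varrho_c^{\gamma-1}(\mathfrak t_c-t)}$ hold on the \emph{entire} subsonic branch (not just near $\varrho_c$) you must supplement the Taylor expansion with the observation that $\partial_\rho^2\mathfrak t\leq -4(\gamma-1)s$ uniformly on $[\varrho_c,\varrho_m]$; this is true and easy from \eqref{eq:dF}, but you should state it explicitly. In short, the paper's algebraic one-liner is more direct, while your concavity argument buys a better $\epsilon$-power at the cost of an extra verification.
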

\begin{proof}
	The proof is the same as that in \cite[Lemma 2.6]{LSTX2023}. For later use we give the details here. 
	
    \emph{(i)}. It follows from Lemma \ref{g} and the definition of $\varpi_\epsilon$ that $g_\epsilon$ is smooth with respect to $t$ and {$C^{1,1}$} with respect to $z$. Clearly, \eqref{eq:g} follows directly from the upper and lower bound for $g$ in \eqref{g bound} and the definition of $g_\epsilon$ in \eqref{eq:truncation_g}.
	
	\emph{(ii)}. To show \eqref{beta1eps},  we first claim that if $0\leq t\leq\left(1-\frac{\epsilon}2\right) \t_c(\B(z))$ then \begin{equation}\label{eq:g_dt_bound}
		0<\pt_t g(t,z)\leq \frac{C_\gamma  g(t,z)}{\epsilon \t_c(B_\ast)},
	\end{equation}
	where $C_\gamma$ represents a positive constant depending only on $\gamma$. In view of the expression of $\pt_t g(t,z)$ in \eqref{dtg drF} it suffices to estimate $\pt_{\rho}\t(\rho,s)$ at $\rho=\rho(t,s)$ for $0\leq t\leq\left(1-\frac{\epsilon}2\right) \t_c(s)$. In fact, it follows from \eqref{defrhoc} that one has  $s=\frac{\gamma+1}{2}h(\varrho_c(s))$. Hence for $0\leq t\leq\left(1-\frac{\epsilon}2\right) \t_c(s)$  and $\rho\geq \varrho_c(s)$ it holds that
	\begin{align*}
		\partial_{\rho}\t(\rho, s)&\stackrel{\eqref{eq:dF}}{=}2(\gamma+1)\rho\left(h(\varrho_c(s))-h(\rho)\right)\stackrel{\eqref{tc}}{=}2(\gamma+1)\rho\left(\frac{
			\t(\rho,s)}{2\rho^2}-\frac{\t_c(s)}{2\varrho_c(s)^2}\right)\\
		&\leq (\gamma+1)\frac{\t(\rho,s)-\t_c(s)}{\rho}\leq -\frac{(\gamma+1)\epsilon \t_c(s)}{2\rho}.
	\end{align*}
	Thus from \eqref{dtg drF} and \eqref{def:t_bound} we conclude that
	\begin{equation*}
		0<\pt_tg(t,z)=-\frac{g^2(t,z)}{\pt_\rho\t(\rho, s)}\Big|_{\rho=\frac{1}{g(t,z)}, s=\mathcal{B}(z)}\leq \frac{2g(t,z)}{(\gamma+1)\epsilon\t_c(\mathcal{B}(z))}\leq \frac{C_\gamma  g(t,z)}{\epsilon \t_c(B_\ast)},
	\end{equation*}
	which is the claimed inequality \eqref{eq:g_dt_bound}.
	With \eqref{eq:g_dt_bound} at hand, we have
	\begin{equation}\label{eq:g_dt}
		\begin{split}
			\pt_tg_\epsilon(t,z) &= \partial_t g (t, z) \varpi_\epsilon\left(\frac{t}{\t_c(\B(z))}\right)+(g(t, z)-g^*) \varpi'_\epsilon\left(\frac{t}{\t_c(\B(z))}\right)\frac{1}{\t_c(\B(z))}\\
			&\leq \frac{C_\gamma  g(t,z)}{\epsilon \t_c(B_\ast)}+\frac{8g^\ast}{\epsilon \t_c(B_\ast)} \leq \frac{C_\gamma g^\ast}{\epsilon \t_c(B_\ast)}.
		\end{split}
	\end{equation}
	Note that 
	\begin{equation}\label{dtgm_0}
		\pt_tg_\epsilon(t,z)=0 \q\text{in }\{(t,z): t\geq (1-\epsilon/2)\t_c(\mathcal{B}(z)), z\in \R\}.
	\end{equation} 
	Thus
	\begin{align*}
		0\leq t\pt_tg_\epsilon(t,z)\leq \frac{C_\gamma g^\ast \t_c(\B(z))}{\epsilon \t_c(B_\ast)}\leq \frac{C_\gamma g^\ast \t_c(B^\ast)}{\epsilon \t_c(B_\ast)}.
	\end{align*}
	Since $B_\ast$ and $B^\ast$ are equivalent, cf. \eqref{def:t_bound}, then the estimate \eqref{beta1eps} follows directly from  \eqref{eq:g}.
	
	\emph{(iii).}  Direct computations give
		\begin{equation}\label{gm_dz}
			\begin{aligned}
				&\pt_z g_\epsilon (t,z)= \pt_z g(t,z)\varpi_\epsilon\left(\frac{t}{\t_c(\B(z))}\right) + (g^\ast-g(t,z))\varpi_\epsilon'\left(\frac{t}{\t_c(\B(z))}\right) \frac{t\t_c'(\B(z))\B'(z)}{\t_c(\B(z))^2}\\
				\stackrel{\eqref{dzg dtg}}{=} & \left\{ -2\frac{\pt_t g(t,z)}{ g(t,z)^2} \varpi_\epsilon\left(\frac{t}{\t_c(\B(z))}\right)+ (g^\ast-g(t,z))\varpi_\epsilon'\left(\frac{t}{\t_c(\B(z))}\right) \frac{t\t_c'(\B(z))}{\t_c(\B(z))^2}\right\}\B'(z).
			\end{aligned}
		\end{equation}
		Note that both sums in the expression of $\pt_tg_\epsilon$, cf. \eqref{eq:g_dt}, are nonnegative. Thus we get from  the expression of $\pt_zg_\epsilon$ in \eqref{gm_dz} that
		\begin{equation}\label{eq:dzgm_dtgm}\begin{split}
				|\pt_zg_\epsilon(t,z)|\leq& \max\left\{\frac{2}{g^2(t,z)},\frac{t\mathfrak t_c'(\mathcal B(z))}{\mathfrak t_c(\mathcal B(z))}\right\}|\mathcal B'(z)| \pt_tg_\epsilon(t,z).
			\end{split}
		\end{equation}
		By \eqref{eq:u0_eps0_B} and \eqref{eq:rhobar_B_*} one has 
		\begin{equation}\label{eq:Bdz}
			|\mathcal{B}'(z)|
			\leq C_\gamma\kappa_0B_\ast^{-\frac{1}{\gamma-1}}.
		\end{equation}
		Combining the above two estimates with  \eqref{g bound}, 
		\begin{align}\label{lable_2}
			\t_c(\B(z))\sim B_\ast^{\frac{\gamma+1}{\gamma-1}},
			\quad \t'_c(\B(z))\sim B_\ast^{\frac{2}{\gamma-1}}
		\end{align}
		and \eqref{eq:g_dt}-\eqref{dtgm_0} yields the first inequality in \eqref{eq:dzg}.
		
		To show the second inequality in \eqref{eq:dzg}, first we note that from \eqref{eq:dzgm_dtgm} and \eqref{dtgm_0} one has
		\begin{equation}\label{dzgm_0}
			\pt_zg_\epsilon(t,z)=0 \q\text{in }\{(t,z): t\geq (1-\epsilon/2)\t_c(\mathcal{B}(z)), z\in \R\}.
		\end{equation} 
		Then it follows from the first inequality in \eqref{eq:dzg} that
		\begin{align*}
			\left|t\pt_zg_\epsilon(t,z)\right|\leq \t_c(\mathcal{B}(z))|\pt_zg_\epsilon(t,z)|\leq C_\gamma\kappa_0\epsilon^{-1}.
		\end{align*}
		This finishes the proof of the lemma.
\end{proof}

\section{Variational formulation for the free boundary problem}\label{sec variational formulation}

As in \cite[Section 3]{L2023_axi_small}, we can show that the quasilinear equation of the stream function is an Euler-Lagrange equation for an energy functional. Thus the jet problem can be transformed into a variational problem.

In Sections \ref{sec variational formulation}-\ref{sec fine properties}, we will always assume that the mass flux $Q$ satisfies  $Q>\tilde Q$, where $\tilde Q$ is defined in \eqref{def:Q_*}.

Let $\O$ be the domain bounded by $\N_0$ (the $x$-axis) and $\N\cup([0,\infty)\times\{1\})$. Since $\Omega$ is unbounded, we make an  approximation by considering the problems in a family of truncated domains $\O_{\mu,R}:=\O\cap\{-\mu<x<R\}$, where $\mu$ and $R$ are positive numbers. Set
\begin{equation}\label{G def}
G_\v(t,z):=\frac12\int_0^tg_\v(s,z)ds+\frac{1}{\g}(g^{-\g}_\v(0,z)-g_\v^{-\g}(0,Q)).
\end{equation}
and
\begin{equation}\label{Phi def}
\Phi_\v(t,z):=-G_\v(t,z)+2\pt_tG_\v(t,z)t.
\end{equation}
Given  $\psi^\sharp_{\mu,R}\in C(\pt\Omega_{\mu,R})\cap H^1(\Omega_{\mu,R})$ with $0\leq \psi^\sharp_{\mu,R}\leq Q$, we consider the following minimization problem:
\begin{equation}\label{variation problem}
	\text{find }\psi\in K_{\psi^\sharp_{\mu,R}}  \, \text{ s.t. } \, J_{\mu,R,\Ld}^\v(\psi)= \inf_{\phi\in K_{\psi^\sharp_{\mu,R}}} J_{\mu,R,\Ld}^\v(\phi),
\end{equation}
where 
\begin{equation}\label{Jm def}
	J^\v_{\mu,R,\Ld}(\phi):=\int_{\Omega_{\mu,R}}y\bigg[G_\v\bigg(\left|\frac{\n\phi}{y}\right|^2,\phi\bigg)+\ld_\v^2\chi_{\{\phi<Q\}}\bigg]dX, \q\q \ld_\v: =\sqrt{\Phi_\v(\Ld^2,Q)}
\end{equation}
with $\Ld$ given by \eqref{Ld}, and where
$$K_{\psi^\sharp_{\mu,R}}:=\{\phi\in H^1(\O_{\mu,R}):\phi=\psi^\sharp_{\mu,R}\text{ on } \pt\O_{\mu,R}\}.$$
Note that $\Phi_\epsilon(\Lambda^2,Q)>0$.
Indeed, with the aid of ellipticity condition \eqref{beta1eps},  straightforward computations show that $t\mapsto \Phi_\epsilon(t,z)$ is monotone increasing:
\begin{align}\label{Phim dt}
	0<\frac12	c_\ast B_*^{-\frac{1}{\gamma-1}}\leq\pt_t\Phi_\v(t,z)=\frac12g_\v(t,z)+\pt_tg_\v(t,z)t\leq\frac12	c^\ast\v^{-1} B_*^{-\frac{1}{\gamma-1}}.
\end{align}
Since $\Phi_\epsilon(0,Q)=-G_\epsilon(0,Q)=0$, one immediately has $\Phi_\epsilon(t,Q)>0$ for $t>0$.

The existence and the H\"{o}lder regularity of minimizers for \eqref{variation problem} have been shown in \cite[Lemmas 4.2 and 4.7]{L2023_axi_small} (see also Lemmas \ref{minimizer existence} and \ref{lemmas}). We now derive the Euler-Lagrange equation for the variational problem \eqref{variation problem} in the open set $\O_{\mu,R}\cap\{\psi<Q\}$.
	
\begin{lemma}\label{EL}
Let $\psi$ be a minimizer of problem \eqref{variation problem}. Then $\psi$ is a solution to
\begin{equation}\label{EL equ}
\n\c\bigg(g_\v\bigg(\left|\frac{\n\psi}{y}\right|^2,\psi\bigg)\frac{\n\psi}{y}\bigg)=y\pt_zG_\v\bigg(\left|\frac{\n\psi}{y}\right|^2,\psi\bigg) \q\text{in } \O_{\mu,R}\cap\{\psi<Q\}.
\end{equation}
Furthermore, if $|\n\psi/y|^2\leq(1-\v)\t_c(\B(\psi))$, it holds that
\begin{equation}\label{Gdz expression}
\pt_zG_\v\bigg(\left|\frac{\n\psi}{y}\right|^2,\psi\bigg)=\frac{\B'(\psi)}{g(|\frac{\n\psi}{y}|^2,\psi)}.
\end{equation}
\end{lemma}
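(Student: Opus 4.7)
The plan is to derive \eqref{EL equ} by computing the first variation of $J^\epsilon_{\mu,R,\Lambda}$ and then to verify \eqref{Gdz expression} by direct calculation using the defining relation \eqref{dzg dtg}. By the Hölder regularity of minimizers cited from \cite{L2023_axi_small}, the set $\Omega_{\mu,R}\cap\{\psi<Q\}$ is open, so I take an arbitrary test function $\phi\in C_c^\infty(\Omega_{\mu,R}\cap\{\psi<Q\})$ and consider the perturbation $\psi_\tau:=\psi+\tau\phi$. For $|\tau|$ small enough one still has $\psi_\tau<Q$ on the support of $\phi$, while $\psi_\tau=\psi$ outside, so $\psi_\tau\in K_{\psi^\sharp_{\mu,R}}$ and, crucially, $\chi_{\{\psi_\tau<Q\}}=\chi_{\{\psi<Q\}}$ pointwise. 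Thus the free boundary term in $J^\epsilon_{\mu,R,\Lambda}$ contributes nothing to the first variation.

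Differentiating $J^\epsilon_{\mu,R,\Lambda}(\psi_\tau)$ in $\tau$ at $\tau=0$ and using that $\psi$ is a minimizer then gives
\begin{equation*}
\int_{\Omega_{\mu,R}} y\left[2\partial_t G_\epsilon\!\left(\left|\tfrac{\nabla\psi}{y}\right|^2,\psi\right)\tfrac{\nabla\psi}{y^2}\cdot\nabla\phi+\partial_z G_\epsilon\!\left(\left|\tfrac{\nabla\psi}{y}\right|^2,\psi\right)\phi\right]dX=0.
\end{equation*}
By the definition \eqref{G def}, $2\partial_t G_\epsilon(t,z)=g_\epsilon(t,z)$, and the identity above is exactly the weak form of \eqref{EL equ}. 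Since $\phi$ is arbitrary this yields \eqref{EL equ} in the open set $\Omega_{\mu,R}\cap\{\psi<Q\}$.

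For the second assertion, note that $\varpi_\epsilon(s)=\varpi((s-1)/\epsilon)\equiv 1$ for $s\le 1-\epsilon$, so whenever $|\nabla\psi/y|^2\le(1-\epsilon)\mathfrak t_c(\mathcal B(\psi))$ one has $g_\epsilon(t,z)=g(t,z)$ (and hence $\partial_z g_\epsilon=\partial_z g$) at the relevant argument and also at $t=0$. Differentiating the definition of $G_\epsilon$ in $z$ gives
\begin{equation*}
\partial_z G_\epsilon(t,z)=\tfrac12\int_0^t\partial_z g(s,z)\,ds-g^{-\gamma-1}(0,z)\,\partial_z g(0,z).
\end{equation*}
Applying the relation \eqref{dzg dtg} as $\partial_z g(s,z)=-2\mathcal B'(z)\partial_s g(s,z)/g^2(s,z)=2\mathcal B'(z)\partial_s(1/g(s,z))$, the integral collapses to $\mathcal B'(z)(1/g(t,z)-1/g(0,z))$. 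For the boundary term I use that $1/g(0,z)=\varrho_m(\mathcal B(z))=((\gamma-1)\mathcal B(z))^{1/(\gamma-1)}$ so that $g^{-\gamma-1}(0,z)=\varrho_m(\mathcal B(z))^{\gamma+1}$, and combine with \eqref{eq:drhot} evaluated at $t=0$ (which gives $\partial_t\varrho(0,z)=-1/(2\varrho_m(\mathcal B(z))^\gamma)$ and hence $\partial_t g(0,z)=1/(2\varrho_m(\mathcal B(z))^{\gamma+2})$). Together with \eqref{dzg dtg} at $t=0$, a short algebraic simplification shows $-g^{-\gamma-1}(0,z)\partial_z g(0,z)=\mathcal B'(z)/g(0,z)$, which cancels the $-\mathcal B'(z)/g(0,z)$ from the integral, leaving $\partial_z G_\epsilon(t,z)=\mathcal B'(z)/g(t,z)$ as claimed.

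The only subtle point is the admissibility of the perturbation: one must confirm that both the $\chi_{\{\phi<Q\}}$ term contributes nothing and that $\phi$ sees no boundary contribution from $\partial\Omega_{\mu,R}$. The first is handled by restricting to compactly supported $\phi$ inside the open set $\{\psi<Q\}$ and choosing $\tau$ small enough; the second is automatic. The remainder is calculus, and the cancellation in the second part is driven entirely by the structural identity \eqref{dzg dtg} together with the particular normalization $\frac{1}{\gamma}g^{-\gamma}_\epsilon(0,z)$ chosen in \eqref{G def}, which is precisely what makes $\partial_zG_\epsilon$ reduce to $\mathcal B'(\psi)/g$ in the subsonic zone.
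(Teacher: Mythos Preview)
Your proof is correct and follows essentially the same route as the paper: both derive \eqref{EL equ} by a first-variation computation with test functions compactly supported in the open set $\{\psi<Q\}$ (so the characteristic-function term is inert), and both obtain \eqref{Gdz expression} by differentiating \eqref{G def} in $z$, using the structural identity \eqref{dzg dtg} to collapse the integral $\tfrac12\int_0^t\partial_z g\,ds$ and invoking \eqref{eq:drhot} at $t=0$ to evaluate the boundary term coming from the $\tfrac{1}{\gamma}g_\epsilon^{-\gamma}(0,z)$ normalization. Your write-up is slightly more explicit about the admissibility of the perturbation, but otherwise the arguments coincide.
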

\begin{proof}
For any $\eta\in C^\infty_0(\O_{\mu,R}\cap\{\psi<Q\})$,  direct computations give
$$\frac{d}{d\tau}J_{\mu,R,\Ld}^\v(\psi+\tau\eta)\bigg|_{\tau=0}
=\int_{\O_{\mu,R}}2\pt_tG_\v\bigg(\left|\frac{\n\psi}{y}\right|^2,\psi\bigg)\frac{\n\psi}{y}\c\n\eta
+y\pt_zG_\v\bigg(\left|\frac{\n\psi}{y}\right|^2,\psi\bigg)\eta.$$
By the definition of $G_\v$ in \eqref{G def}, minimizers of \eqref{variation problem} satisfy the equation \eqref{EL equ}.

Next, by the definition of $G_\epsilon$ in \eqref{G def}, one has
$$\pt_z G_\epsilon(t,z)=\frac{1}{2}\int_0^t \pt_z g_\epsilon(\tau, z) d\tau-g^{-\gamma-1}_\epsilon(0,z)\pt_zg_\epsilon(0,z).$$
Note that 
\begin{equation}\label{gm_g}
	g_\epsilon(t,z)=g(t,z) \q\text{in } \mathcal T_\epsilon:=\{(t,z): 0\leq t\leq(1-\epsilon)\t_c(\B(z)), z\in \R\}.
\end{equation} 
Thus using \eqref{dzg dtg} one has
\begin{align*}
	\pt_zG_\epsilon(t,z)=\mathcal{B}'(z)\varrho(t,z)-\mathcal{B}'(z)\varrho(0,z)-g^{-\gamma-1}(0,z)\pt_zg(0,z)\quad\text{in } \mathcal T_\epsilon.
\end{align*}
It follows from \eqref{eq:drhot}  that $\pt_t\varrho (0,z)=-\frac{1}{2}\varrho^{-\gamma}(0,z)$. This together with \eqref{dzg dtg} gives
\begin{align}
	\label{eq:dg_0}
	\pt_zg(0,z)=-\mathcal{B}'(z) \varrho^{-\gamma}(0,z)=-\mathcal{B}'(z)g^{\gamma}(0,z) .
\end{align}
Substituting  \eqref{eq:dg_0} into the expression of $\pt_zG_\epsilon$ one gets
\begin{align}\label{eq:dzG}
	\pt_zG_\epsilon(t,z)=\mathcal{B}'(z)\varrho(t,z)=\frac{\mathcal{B}'(z)}{g(t,z)} \quad \text{in } \mathcal T_\epsilon.
\end{align}
This completes the proof for the lemma.
\end{proof}

Minimizers of problem \eqref{variation problem} satisfy the following free boundary condition. 

\begin{lemma}(\cite[Lemma 3.2]{L2023_axi_small})\label{free BC}
Let $\psi$ be a minimizer of problem \eqref{variation problem}. Then
$$\Phi_\v\bigg(\left|\frac{\n\psi}{y}\right|^2,\psi\bigg)=\ld_\v^2 \quad\text{on } \G_{\psi}:=\pt\{\psi<Q\}\cap\O_{\mu,R}$$
in the sense that
$$\lim_{s\to0+}\int_{\pt\{\psi<Q-s\}}y\bigg[\Phi_\v\bigg(\left|\frac{\n\psi}{y}\right|^2,\psi\bigg)-\ld_\v^2\bigg](\eta\c\nu)d\mathcal{H}^1=0 \q\text{for any }
\eta\in C_0^\infty(\O_{\mu,R};\R^2),$$
where $\mathcal{H}^1$ is the one-dimensional Hausdorff measure.
\end{lemma}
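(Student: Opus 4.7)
My plan is to prove the free boundary condition via a domain (inner) variation argument in the spirit of Alt--Caffarelli--Friedman. Fix $\eta\in C_0^\infty(\O_{\mu,R};\R^2)$, and for $|s|$ small let $\tau_s(X)=X+s\eta(X)$. Since $\eta$ is compactly supported in $\O_{\mu,R}$, $\tau_s$ is a smooth diffeomorphism of $\O_{\mu,R}$ equal to the identity near $\pt\O_{\mu,R}$, so the competitor $\psi_s(Y):=\psi(\tau_s^{-1}(Y))$ lies in the admissible class $K_{\psi^\sharp_{\mu,R}}$. Minimality of $\psi$ therefore forces $\frac{d}{ds}\big|_{s=0}J^\v_{\mu,R,\Ld}(\psi_s)=0$.

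To compute this derivative, change variables back via $Y=\tau_s(X)$. A key observation is that the defect is rigidly transported: $\chi_{\{\psi_s<Q\}}(Y)=\chi_{\{\psi<Q\}}(X)$. Using $\det D\tau_s = 1+s\,\mathrm{div}\,\eta+O(s^2)$, $(D\tau_s)^{-T}=I-s(D\eta)^T+O(s^2)$, and
\[
\left|\frac{(D\tau_s)^{-T}\n\psi}{X_2+s\eta_2}\right|^2=|\xi|^2-2s\big[\eta_2|\xi|^2/y+\xi_i\xi_j\pt_i\eta_j\big]+O(s^2),\q \xi:=\n\psi/y,
\]
expansion to first order and setting the derivative to zero yields
\begin{equation*}
0=\!\int_{\O_{\mu,R}}\!\!\mathrm{div}(y\eta)\big[G_\v(|\xi|^2,\psi)+\ld_\v^2\chi_{\{\psi<Q\}}\big]\,dX -2\!\int_{\O_{\mu,R}}\!\!\pt_tG_\v(|\xi|^2,\psi)\big[\eta_2|\xi|^2+y\xi_i\xi_j\pt_i\eta_j\big]\,dX.
\end{equation*}

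Next I convert this bulk identity into a boundary integral on $\{\psi=Q-s\}$ for small $s>0$. By Lemma \ref{EL} and the uniform ellipticity bounds in Lemma \ref{lem:truncation_g}, $\psi$ is $C^{2,\alpha}_{\mathrm{loc}}$ on $\{\psi<Q\}$, and Sard's theorem ensures $\{\psi=Q-s\}$ is a $C^{2,\alpha}$ curve with outward unit normal $\nu=\n\psi/|\n\psi|$ for a.e.\ small $s$. Localise the identity to $\{\psi<Q-s\}$, integrate the term $y\xi_i\xi_j\pt_i\eta_j$ by parts there, and apply the Euler--Lagrange identity $\pt_i(\pt_tG_\v\,\xi_i)=\tfrac12\,y\,\pt_zG_\v$ together with the chain rule $\n G_\v(|\xi|^2,\psi)=\pt_tG_\v\n(|\xi|^2)+\pt_zG_\v\n\psi$. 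The interior terms cancel exactly, leaving a boundary integral on $\{\psi=Q-s\}$ whose integrand, after using $\xi\cdot\nu=|\xi|$ and $\n\psi\cdot\eta=|\n\psi|(\eta\cdot\nu)$, becomes $y(2|\xi|^2\pt_tG_\v-G_\v)(\eta\cdot\nu)=y\,\Phi_\v(|\xi|^2,\psi)(\eta\cdot\nu)$. The $\ld_\v^2\chi_{\{\psi<Q\}}$ piece reduces, via Gauss--Green on $\{\psi<Q-s\}$, to $-\ld_\v^2\int y(\eta\cdot\nu)\,d\mathcal{H}^1$ on the same level set, plus a sliver contribution from $\{Q-s\leq\psi<Q\}$ that vanishes as $s\to 0^+$.

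The principal difficulty is controlling the sliver term and justifying termwise passage to the limit $s\to 0^+$: one must verify $|\{Q-s<\psi<Q\}|\to 0$ and use appropriate non-degeneracy of the minimizer near the free boundary to ensure stability of the boundary integrals. These ingredients come from the standard Alt--Caffarelli--Friedman framework invoked in the paper (and already carried out in \cite[Lemma 3.2]{L2023_axi_small}); once they are in hand, the algebraic identity $-G_\v+2t\pt_tG_\v=\Phi_\v$ from \eqref{Phi def} is exactly what matches the combined boundary integrand to the statement of the lemma.
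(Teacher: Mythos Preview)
Your proposal is correct and follows the standard Alt--Caffarelli--Friedman domain-variation argument, which is precisely the method behind the cited result \cite[Lemma~3.2]{L2023_axi_small}; the paper itself gives no independent proof but simply invokes that reference. Your first-variation computation, the identification of the boundary integrand with $\Phi_\v=-G_\v+2t\,\pt_tG_\v$, and the acknowledgement that the sliver and limit-passage steps are supplied by the ACF framework are all accurate and match what the cited proof does.
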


\begin{remark}[Relation between $\lambda_\epsilon$ and $\Lambda$]\label{rmk:ld Ld}
	If the free boundary $\Gamma_\psi$ is smooth and $\psi$ is smooth near $\Gamma_\psi$, then it follows from the monotonicity of $t \mapsto \Phi_\epsilon(t,z)$ that for each $z$ and the definition of $\lambda_\epsilon$ in \eqref{Jm def}  that $|\nabla\psi/y|=\Lambda$ on $\Gamma_\psi.$
		Moreover, since
		$$\lambda_\epsilon^2=\Phi_\epsilon(\Lambda^2, Q) =\Phi_\epsilon(0, Q)+\int_0^{\Lambda^2}\pt_t\Phi_\epsilon(s,Q)ds,$$
		using \eqref{Phim dt} and $\Phi_\epsilon(0,Q)=0$ one has
		\begin{align*}
			\frac{1}{2}c_\ast\Lambda^2	\leq B_\ast^{\frac{1}{\gamma-1}}\lambda_\epsilon^2\leq \frac12c^\ast \epsilon^{-1}\Lambda^2.
		\end{align*}
		In view of \eqref{eq:rhobar_B_*} and \eqref{eq:rhobar} one concludes that there exists a constant $C>0$ depending on $\gamma,\,\epsilon$, and $\|y\bar u\|_{L^1([0,\bar H])}$ such that
		\begin{align}\label{ld_Ld}
			C^{-1}\leq \frac{Q^{\frac{1}{2}}\lambda_\epsilon}{\Lambda}\leq C.
		\end{align}
\end{remark}

\section{The existence and regularity for the free boundary problem}\label{sec existence and regularity}

In this section, we study the existence and the regularity of minimizers for the minimization problem \eqref{variation problem}, as well as the regularity of the free boundary away from the nozzle. 

\subsection{Existence of minimizers}\label{subsec_existence}
For the ease of notations in the rest of this section, let
\begin{equation}\label{notation JDG}
\D:=\O_{\mu,R}=\O\cap\{-\mu<x<R\},\q
\MG(\bp,z):=G_\v(|\bp|^2,z),\q  \J:=J^\v_{{\mu,R},\Ld}, \q \ld:=\ld_\v.
\end{equation}
Then $\D$ is a bounded Lipschitz domain in $\R^2$ and is contained in the infinite strip $\mathbb R\times[0,\bar H]$, $\MG: \R^2\times \R\to \R$ is smooth in $\bp$ and $C^{1,1}$ in $z$, and $\ld$ is a positive constant. The minimization problem \eqref{variation problem} can be rewritten as
\begin{equation}\label{minimization problem}
\text{find }\psi\in \K_{\psi^\sharp}  \, \text{ s.t. } \, \J(\psi)= \inf_{\phi\in \K_{\psi^\sharp}} \J(\phi)
\end{equation}
with
\begin{equation*}\label{J}
\J(\phi):=\int_\D y\left[\MG\left(\frac{\n\phi}{y},\phi\right)+\ld^2\chi_{\{\phi<Q\}}\right]dX
\end{equation*}
and 
$$\K_{\psi^\sharp}:=\{\phi\in H^1(\D):\phi=\psi^\sharp \text{ on } \pt\D\}.$$
Here $\psi^\sharp$ is a given continuous function satisfying $0\leq \psi^\sharp\leq Q$ on $\pt\D$.

The properties of $\MG$ are summarized in the following proposition.

\begin{proposition}\label{Gproperties pro}
Let $G_\epsilon$ be defined in \eqref{G def} and $\MG$ be defined in \eqref{notation JDG}, then the following properties hold.
\begin{enumerate}
	\item[(i)] {There exist  positive constants  $\mathfrak b_\ast=c_\ast B_\ast^{-\frac{1}{\gamma-1}}$ and $\mathfrak b^\ast=c^\ast B_\ast^{-\frac{1}{\gamma-1}}$ with $c_\ast$ and $c^\ast$ depending only on $\gamma$,  such that}
	\begin{align}
		\mathfrak b_*|\bp|^2 &\leq p_i \partial_{p_i} \MG (\bp,z) \leq \mathfrak b^*|\bp|^2,\label{eq:convex}\\
		\mathfrak b_* |\mathbf\xi |^2 &\leq \xi_i\partial_{p_ip_j} \MG(\bp,z)\xi_j \leq \mathfrak b^* \epsilon^{-1} |\mathbf\xi|^2 \quad\text{for all }\mathbf\xi\in \R^2.\label{eq:convex0}
	\end{align}
	\item[(ii)] One has
	\begin{equation}\label{supportG}
	\begin{split}
	&\pt_z\MG(\bp,z)= 0 \quad\text{in } \{(\bp, z): \bp\in\R^2, z\in (-\infty,0]\},\\
   &\pt_z\MG(\bp,z)\geq 0 \quad \text{in } \{(\bp, z): \bp\in\R^2, z\in (0,\infty)\}.
	\end{split}
	\end{equation}
	\item[(iii)] There exist constants
	\begin{equation}\label{delta}
		\delta':=\epsilon^{-1}C_\gamma \kappa_0  \quad\text{and}\q
		\delta:= \delta' B_\ast^{-\frac{1}{\gamma-1}}\left(
		B_\ast^{-\frac12} +\kappa_0 B_\ast^{-1}+\bar u_\ast^{-1}\right),	
	\end{equation}
	where $\kappa_0$ is defined in \eqref{k0} and $C_\gamma>0$ is a constant depending only on $\g$, such that
	\begin{align}
		&\epsilon^{-1}|\pt_z \MG(\bp,z)|+|\bp\cdot \pt_{\bp z} \MG(\bp,z)|\leq \delta', \quad |\pt_{\bp z}\MG(\bp,z)|+|\pt_{zz}\MG(\bp,z)|\leq \delta, \label{eq:upper_pzzG}\\
		&\MG(\mathbf 0,Q)=0,\quad \MG(\bp,z)\geq {\frac{\mathfrak b_*}2}|\bp|^2-
		C_\gamma\kappa_0
		{\min\{Q, (Q-z)_+\}}.\label{eq:com_energy}
	\end{align}
\item[(iv)] One has 
\begin{align}
	&\pt_{zz}\MG(\bp,z)\geq0 
	\q\text{and}\q
	{\rm det}\begin{bmatrix}
			\pt_{\bp\bp}\MG(\bp,z)& y\pt_{\bp z}\MG(\bp,z)\\
			\pt_{\bp z}\MG(\bp,z) &	y\pt_{zz}\MG(\bp,z)
		\end{bmatrix}
		\geq0  \label{det_0}
	\end{align}
in the set $\mathcal P_\v:=\{(\bp,z): |\bp|^2\leq\left(1-{\v}\right)\t_c(\B(z)),z\in\mathbb R\}$.
\end{enumerate}
\end{proposition}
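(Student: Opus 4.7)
The plan is to verify items (i)--(iv) sequentially by direct differentiation of $\MG(\bp,z) = G_\v(|\bp|^2,z)$, using the quantitative bounds on $g_\v$ from Lemma \ref{lem:truncation_g}, the identity $\pt_z g(0,z) = -\B'(z) g^\g(0,z)$ from \eqref{eq:dg_0}, and the sign/monotonicity properties of $\B$ from \eqref{eq:sign_B} and Proposition \ref{BS pro}.

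Items (i) and (iii) are essentially chain-rule calculations. For (i), $\pt_{p_i}\MG = g_\v p_i$ and $\pt_{p_i p_j}\MG = g_\v\delta_{ij} + 2\pt_t g_\v\,p_ip_j$; \eqref{eq:convex} then follows from \eqref{eq:g}, while the quadratic form $g_\v|\xi|^2 + 2\pt_t g_\v(\bp\cdot\xi)^2$ yields \eqref{eq:convex0} via $\pt_t g_\v\geq 0$ and \eqref{beta1eps}. For (iii), the two halves of $\pt_{\bp z}\MG = \pt_z g_\v\,\bp$---namely $|\bp\cdot\pt_{\bp z}\MG| = |\bp|^2|\pt_z g_\v|$ and $|\pt_{\bp z}\MG|\leq |\bp|\,|\pt_z g_\v|$---are bounded via the two inequalities in \eqref{eq:dzg}, noting that $\pt_z g_\v$ is supported in $|\bp|^2\leq \t_c(B^\ast)$ so that $|\bp|\leq CB_\ast^{(\g+1)/(2(\g-1))}$; the bound on $|\pt_z\MG|$ uses $0\leq \pt_z G_\v\leq \B'/g(0,z)$ (from item (ii)) combined with \eqref{eq:u0_eps0_B} and \eqref{eq:rhobar_B_*}; the $|\pt_{zz}\MG|$ estimate follows from an analogous computation of $\pt_{zz}g_\v$ obtained by differentiating \eqref{gm_dz}; the normalization $\MG(\mathbf 0,Q)=0$ is immediate from \eqref{G def}; and the lower bound in \eqref{eq:com_energy} follows from $G_\v(t,z)\geq \tfrac{\mathfrak b_\ast}{2}t + \tfrac{1}{\g}\bigl(g^{-\g}(0,z)-g^{-\g}(0,Q)\bigr)$ together with $\bigl|g^{-\g}(0,z)-g^{-\g}(0,Q)\bigr|\leq C\k_0\min\{Q,(Q-z)_+\}$, obtained by integrating $\tfrac{d}{dz}g^{-\g}(0,z) = \g\varrho_m(\B(z))\B'(z)$ and using $\mathrm{supp}\,\B'\subset[0,\infty)$.

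The crux is item (ii). Differentiating \eqref{G def} and using \eqref{eq:dg_0} yields
\[
\pt_z G_\v(t,z) = \frac{\B'(z)}{g(0,z)} + \frac{1}{2}\int_0^t \pt_z g_\v(s,z)\,ds.
\]
When $z\leq 0$, \eqref{eq:sign_B} gives $\B'(z)=0$ and then \eqref{gm_dz} forces $\pt_z g_\v\equiv 0$, so $\pt_z G_\v\equiv 0$. When $z>0$, identity \eqref{eq:dzG} gives $\pt_z G_\v = \B'/g\geq 0$ on the subsonic layer $\mathcal T_\v$; since $\pt_t(\pt_z G_\v) = \tfrac12 \pt_z g_\v\leq 0$ and $\pt_z g_\v$ vanishes for $t\geq t^\ast := (1-\v/2)\t_c(\B(z))$, the function $t\mapsto \pt_z G_\v(t,z)$ is nonincreasing and eventually constant, so its minimum equals $\pt_z G_\v(t^\ast,z)$. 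Integrating the representation \eqref{gm_dz} of $\pt_z g_\v$ by parts cancels the boundary term $\B'(z)/g(0,z)$ and leaves
\[
\pt_z G_\v(t^\ast,z) = \B'(z)\int_{(1-\v)\t_c}^{t^\ast} \bigl(-\varpi_\v'(s/\t_c)\bigr)\left[\frac{1}{g(s,z)\t_c} - \frac{(g^\ast-g(s,z))\,s\,\t_c'(\B(z))}{2\t_c(\B(z))^2}\right]ds.
\]
The sign of the bracket matches that of $2\t_c - g(g^\ast-g)s\,\t_c'$; the AM--GM estimate $g(g^\ast-g)\leq (g^\ast)^2/4$, the bound $s\leq \t_c$, and the comparability $\B(z)\leq B^\ast\leq \tfrac{\g+1}{2}B_\ast$ from \eqref{def:t_bound} give $(g^\ast)^2 \t_c'(\B(z)) = 2(\B(z)/B_\ast)^{2/(\g-1)}\leq 2\bigl(\tfrac{\g+1}{2}\bigr)^{2/(\g-1)}\leq 2e<8$, so the bracket is non-negative and $\pt_z G_\v(t^\ast,z)\geq 0$.

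For (iv) on the subsonic set $\mathcal P_\v$, identity \eqref{eq:dzG} gives $\pt_z G_\v = \B'(z)/g(t,z)$, so $\pt_{zz}G_\v = \B''/g + 2(\B')^2\pt_t g/g^4\geq 0$ by $\B''\geq 0$ (Proposition \ref{BS pro}) and $\pt_t g\geq 0$. For the $3\times 3$ determinant, the Schur-complement formula together with $\pt_{\bp z}\MG = \pt_z g\,\bp$ and $(\pt_{\bp\bp}\MG)^{-1}\bp = \bp/(g+2\pt_t g\,t)$ reduces positivity to $\pt_{zz}G_\v(g+2\pt_t g\,t)\geq (\pt_z g)^2 t$; substituting $\pt_z g = -2\B'\pt_t g/g^2$ and the above formula for $\pt_{zz}G_\v$ collapses this to $g^4\B'' + 2g^3\B''\pt_t g\,t + 2g(\B')^2\pt_t g\geq 0$, which is manifest. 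The principal obstacle is the sign analysis of (ii) outside the subsonic layer: once the integration-by-parts identity above is derived, one must close the sign with the AM--GM estimate exploiting $B^\ast\leq \tfrac{\g+1}{2}B_\ast$; the remaining items then reduce to routine chain-rule computations combined with Lemma \ref{lem:truncation_g}.
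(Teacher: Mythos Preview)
Your argument is correct and, for items (i), (iii), and (iv), follows the same route as the paper: chain-rule identities $\pt_{p_i}\MG=g_\v p_i$, $\pt_{p_ip_j}\MG=g_\v\delta_{ij}+2\pt_tg_\v p_ip_j$, $\pt_{p_iz}\MG=\pt_zg_\v p_i$ combined with the quantitative bounds of Lemma~\ref{lem:truncation_g} for (i)/(iii), and for (iv) the computation $\pt_{zz}G_\v=\B''/g+2(\B')^2\pt_tg/g^4$ together with the Schur complement, which is exactly what the paper's ``these together \dots give the second inequality'' encodes.

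The one genuine difference is item (ii). The paper disposes of it in a single line by citing \cite{LSTX2023}, asserting that $\pt_z\MG$ is a nonnegative multiple of $\B'(z)$, and then invoking \eqref{eq:sign_B}. You instead produce a self-contained proof: starting from $\pt_zG_\v(t,z)=\B'(z)/g(0,z)+\tfrac12\int_0^t\pt_zg_\v$, you integrate the subsonic piece by parts to cancel the boundary term and obtain the explicit representation $\pt_zG_\v(t^\ast,z)=\B'(z)\int(-\varpi_\v')[\,1/(g\t_c)-(g^\ast-g)s\t_c'/(2\t_c^2)\,]$, then close the sign via $g(g^\ast-g)\le(g^\ast)^2/4$, $s\le\t_c$, and the key comparability $(g^\ast)^2\t_c'(\B(z))=2(\B(z)/B_\ast)^{2/(\g-1)}\le2((\g+1)/2)^{2/(\g-1)}\le2e<8$, which uses \eqref{def:t_bound}. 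This is more work than the paper's citation, but it has the virtue of being fully self-contained and of making transparent \emph{why} the subsonic hypothesis $Q>\tilde Q$ (through \eqref{def:t_bound}) is what guarantees nonnegativity of the multiplier beyond the subsonic layer.
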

\begin{proof}
The proof for (i) and (iii) is the same as that in \cite[Proposition 4.1]{LSTX2023}. 

As for (ii), note that $\pt_z \MG(\bp, z)$ is a positive multiple of $\mathcal{B}'(z)$ (cf. the proof for \cite[Proposition 4.1(ii)]{LSTX2023}). Thus in view of \eqref{eq:u0_eps0_B} and \eqref{eq:sign_B} one gets \eqref{supportG}. 

To prove (iv), notice by \eqref{eq:dzG} 
\begin{equation*}\label{Gdz}
	\pt_z\MG(|\bp|^2,z)=\frac{\B'(z)}{g(|\bp|^2,z)} \q\text{in }\mathcal P_\v.
\end{equation*}
Hence
\begin{equation}\label{dzzG}\begin{split}
	\pt_{zz}\MG(|\bp|^2,z)=&\frac{\B''(z)}{g(|\bp|^2,z)}-\frac{\B'(z)}{g^2(|\bp|^2,z)}\pt_zg(|\bp|^2,z)\\
	\stackrel{\eqref{dzg dtg}}{=}&\frac{\B''(z)}{g(|\bp|^2,z)}+\frac{2(\B'(z))^2}{g^4(|\bp|^2,z)}\pt_tg(|\bp|^2,z)
\end{split}\end{equation}
in $\mathcal P_\v$. 
Then the first inequality in \eqref{det_0} follows from $\B''(z)\geq0$ (cf. \eqref{eq:u0_eps0_B}) and $\pt_tg(t,z)>0$ (cf. \eqref{eq:g_dt_bound}). 
Furthermore, using the definition of $G_\epsilon$ in \eqref{G def},  straightforward computations yield 
\begin{equation*}
	\begin{split}
		&\pt_{p_ip_j} \MG(\bp,z) = g(|\bp|^2, z)\delta_{ij} + 2\pt_t g(|\bp|^2,z) p_ip_j \quad\text{and}
		\quad \pt_{p_i z}\MG(\bp,z) = p_i \pt_zg(|\bp|^2,z)
	\end{split}
\end{equation*}	
in $\mathcal P_\v$. 
These together with the expression of $\pt_{zz}\MG$ in \eqref{dzzG} and the relation between $\pt_zg$ and $\pt_tg$ in \eqref{dzg dtg} give the second inequality of \eqref{det_0}. This finishes the proof. 
\end{proof}

The existence of minimizers for the minimization problem \eqref{minimization problem} follows from standard theory for calculus of variations.

\begin{lemma}(\cite[Lemma 4.2]{L2023_axi_small})\label{minimizer existence} 
Assume $\MG$ satisfies \eqref{eq:convex0} and \eqref{eq:com_energy}. Then the minimization problem \eqref{minimization problem} has a minimizer.
\end{lemma}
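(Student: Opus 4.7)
The plan is the standard direct method of the calculus of variations, applied to the decomposition
\[
\J(\phi)=\int_\D y\MG(\n\phi/y,\phi)\,dX+\ld^2\int_\D y\chi_{\{\phi<Q\}}\,dX.
\]
First I would establish coercivity from \eqref{eq:com_energy}. Using $0\leq y\leq \bar H$ on $\D$,
\[
\J(\phi)\geq \int_\D y\left(\frac{\mathfrak b_\ast}{2}\frac{|\n\phi|^2}{y^2}-C_\g\kappa_0 Q\right)dX\geq \frac{\mathfrak b_\ast}{2\bar H}\int_\D|\n\phi|^2\,dX-C,
\]
so $\inf_{\K_{\psi^\sharp}}\J>-\infty$ and every minimizing sequence $\{\psi_k\}$ satisfies $\|\n\psi_k\|_{L^2(\D)}\leq C$. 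Friedrichs' inequality applied to $\psi_k-\psi^\sharp\in H^1_0(\D)$ then yields $\|\psi_k\|_{H^1(\D)}\leq C$.

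By Rellich--Kondrachov I can extract a subsequence with $\psi_k\rightharpoonup \psi$ weakly in $H^1(\D)$, strongly in $L^2(\D)$, and pointwise a.e.\ in $\D$; continuity of the trace operator gives $\psi\in \K_{\psi^\sharp}$. I would then verify lower semicontinuity of each term. The integrand $F(X,z,\bp):=y\MG(\bp/y,z)$ is convex in $\bp$---inherited from the Hessian bound \eqref{eq:convex0} via the linear substitution $\bp\mapsto \bp/y$---and is a Carath\'eodory function in $(X,z)$, so the classical Tonelli--Ioffe theorem on weak lower semicontinuity of convex integrands yields
\[
\int_\D y\MG(\n\psi/y,\psi)\,dX \leq \liminf_{k\to\infty}\int_\D y\MG(\n\psi_k/y,\psi_k)\,dX.
\]
For the area term, pointwise a.e.\ convergence $\psi_k\to\psi$ forces $\chi_{\{\psi<Q\}}\leq \liminf_k\chi_{\{\psi_k<Q\}}$ a.e.: at $X$ with $\psi(X)<Q$ the left side equals $1$ and $\psi_k(X)<Q$ eventually, while at $X$ with $\psi(X)\geq Q$ the left side vanishes. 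Fatou's lemma (applied to the nonnegative weighted measure $y\,dX$) then gives lower semicontinuity of the second piece. Combining the two estimates, $\J(\psi)\leq \liminf_k\J(\psi_k)=\inf_{\K_{\psi^\sharp}}\J$, so $\psi$ is a minimizer.

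The only step that is not purely mechanical is the handling of the discontinuous indicator term; the rescue is precisely that $\phi\mapsto \chi_{\{\phi<Q\}}$ is lower semicontinuous along a.e.\ convergent sequences, as above. The vanishing of the weight $y$ on $\N_0$ causes no trouble for existence since $y\geq 0$ and the structural estimates \eqref{eq:convex0} and \eqref{eq:com_energy} hold uniformly in $y$. I anticipate no genuine obstacle beyond this, which matches the author's remark that the argument is identical to that of \cite[Lemma 4.2]{L2023_axi_small}.
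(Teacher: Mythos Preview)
Your proof is correct and follows the same standard direct-method approach that the paper invokes by citation: coercivity from \eqref{eq:com_energy}, weak $H^1$ compactness, Tonelli--Ioffe lower semicontinuity for the convex principal term, and Fatou for the indicator term. This is precisely the argument of \cite[Lemma 4.2]{L2023_axi_small} to which the paper defers.
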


\subsection{Lipschitz regularity and nondegeneracy of minimizers}
With Proposition \ref{Gproperties pro} at hand, we can establish the (optimal) Lipschitz regularity and nondegeneracy property of minimizers as in \cite[Section 4]{L2023_axi_small}.

First, in view of \cite[Section 4.2]{L2023_axi_small}, minimizers of  \eqref{minimization problem} have the following properties. 

\begin{lemma}\label{lemmas}
Let $\psi$ be a minimizer of \eqref{minimization problem}.
Then the following statements hold. 
\begin{itemize}
	\item [(i)] The function $\psi$ is a supersolution of the elliptic equation
	\begin{equation}\label{elliptic equG}
		\pt_i\pt_{p_i}\MG\left(\frac{\n\psi}{y},\psi\right)-y\pt_z\MG\left(\frac{\n\psi}{y},\psi\right)=0,
	\end{equation}
	in the sense of 
	\begin{equation}\label{supersolution}
		\int_{\D}\left[\pt_{p_i}\MG\left(\frac{\n\psi}{y},\psi\right)\pt_i\eta+y\pt_z\MG\left(\frac{\n\psi}{y},\psi\right)\eta\right]\geq0,
		\q\text{for all } \eta\geq0, \ \eta\in C_0^\infty(\D).
	\end{equation}
	
	\item[(ii)] The function $\psi$ satisfies $0\leq \psi\leq Q$ in $\D$.  
	
	\item[(iii)] The function $\psi\in C_{{\rm loc}}^{0,\a}(\D)$ for any $\a\in(0,1)$.
\end{itemize}
\end{lemma}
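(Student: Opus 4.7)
I would prove the three assertions in turn, drawing on the three structural inputs from Proposition \ref{Gproperties pro}: the quadratic ellipticity \eqref{eq:convex}--\eqref{eq:convex0}, the one-sided monotonicity \eqref{supportG} of $\MG$ in $z$, and the normalization and coercivity \eqref{eq:com_energy}.

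For (i), my plan is to test minimality against one-sided inner variations. Fix $\eta\in C_0^\infty(\D)$ with $\eta\geq 0$ and consider the admissible competitor $\psi+t\eta\in\K_{\psi^\sharp}$ for $t>0$ small. Because $\eta\geq 0$ forces $\{\psi+t\eta<Q\}\subseteq\{\psi<Q\}$, the free-boundary term is nonincreasing in $t$, and hence
\[
0\leq \J(\psi+t\eta)-\J(\psi)\leq \int_{\D}y\bigl[\MG(\nabla(\psi+t\eta)/y,\psi+t\eta)-\MG(\nabla\psi/y,\psi)\bigr]\,dX.
\]
Dividing by $t$ and passing $t\to 0^+$ by dominated convergence (using the bounds on $\pt_{\bp}\MG$ and $\pt_z\MG$ from \eqref{eq:convex0} and \eqref{eq:upper_pzzG}, together with $\mathrm{supp}(\eta)\Subset\D$ staying away from $\N_0$) yields exactly the supersolution inequality \eqref{supersolution}.

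For (ii), I would use a two-sided truncation $\tilde\psi:=\min(\max(\psi,0),Q)$, which lies in $\K_{\psi^\sharp}$ because $0\leq\psi^\sharp\leq Q$. On $\{\psi<0\}$, monotonicity \eqref{supportG} ($\MG(\bp,\cdot)$ is constant on $(-\infty,0]$) and the strict $t$-monotonicity $\pt_tG_\v=g_\v/2>0$ give
\[
\MG(\nabla\psi/y,\psi)=\MG(\nabla\psi/y,0)\geq \MG(\mathbf{0},0)=\MG(\nabla\tilde\psi/y,\tilde\psi),
\]
with the indicator $\chi_{\{\cdot<Q\}}$ equal to $1$ on both sides. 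Symmetrically, on $\{\psi>Q\}$, \eqref{supportG} combined with $\MG(\mathbf{0},Q)=0$ from \eqref{eq:com_energy} yields $\MG(\nabla\psi/y,\psi)\geq 0=\MG(\nabla\tilde\psi/y,\tilde\psi)$, and the indicator vanishes on both sides. Hence $\J(\tilde\psi)\leq\J(\psi)$; minimality forces equality, and the strict $t$-monotonicity of $G_\v$ together with the Sobolev chain rule $\nabla\tilde\psi=\chi_{\{0<\psi<Q\}}\nabla\psi$ then force $|\{\psi<0\}|=|\{\psi>Q\}|=0$.

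For (iii), parts (i) and (ii) realize $\psi$ as a bounded local quasi-minimizer of $\J$. The integrand $y\MG(\nabla\phi/y,\phi)$ has quadratic gradient growth and uniform ellipticity by \eqref{eq:convex}--\eqref{eq:convex0} on any compact $K\Subset\D$ (on which $y$ is bounded above and away from $0$), and the free-boundary term $\ld^2\chi_{\{\phi<Q\}}$ is an $L^\infty$ perturbation bounded by $\ld^2$. A Caccioppoli estimate obtained by testing minimality against $\psi\mp\zeta^2(\psi-k)^{\pm}$ with cut-offs $\zeta$ and levels $k$, combined with the standard Giaquinta--Giusti iteration for quasi-minimizers, yields $\psi\in C^{0,\alpha}(K)$ for every $\alpha\in(0,1)$. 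The principal obstacle in this step is the absence of pointwise regularity for the indicator $\chi_{\{\phi<Q\}}$; one handles it by absorbing its contribution as a lower-order $L^\infty$ term in the Caccioppoli inequality, using only the pointwise bound on $\ld^2\chi_{\{\phi<Q\}}$, which is harmless for the De Giorgi iteration that yields local H\"older continuity.
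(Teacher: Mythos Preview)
Your proposal is correct and follows the standard Alt--Caffarelli--Friedman route, which is precisely what the paper defers to (it gives no self-contained proof here, citing instead \cite[Section 4.2]{L2023_axi_small}). Parts (i) and (ii) are exactly the expected one-sided variation and truncation arguments; the only refinement worth spelling out in (ii) is that equality in $\J(\tilde\psi)\le\J(\psi)$ forces $\nabla\psi=0$ a.e.\ on $\{\psi<0\}$ and $\{\psi>Q\}$, whence $\psi^-$ and $(\psi-Q)^+$ are constants in $H^1_0(\D)$ and therefore vanish.

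One small caveat on (iii): the De~Giorgi level-set Caccioppoli argument you describe (testing against $\psi\mp\zeta^2(\psi-k)^\pm$) yields \emph{some} H\"older exponent, not every $\alpha<1$. To reach the stated conclusion $\psi\in C^{0,\alpha}_{\rm loc}$ for all $\alpha\in(0,1)$, the standard device is the Campanato comparison: on each ball $B_r\Subset\D$ compare $\psi$ with the solution $\phi$ of \eqref{elliptic equG} with $\phi=\psi$ on $\partial B_r$; minimality plus the $L^\infty$ bound on the indicator gives $\int_{B_r}|\nabla(\psi-\phi)|^2\le C\lambda^2 r^2$, and the interior decay for $\phi$ then yields $\int_{B_\rho}|\nabla\psi|^2\le C\rho^{2-\delta}$ for every $\delta>0$ in this two-dimensional setting. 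This is presumably what you intend by ``Giaquinta--Giusti,'' but it is worth making the mechanism explicit.
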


We consider the rescaled and renormalized function
\begin{equation}\label{def:psi*}
	\psi^*_{\bar X,r}(X):=\frac{Q-\psi(\bar X+rX)}{Qr},\q \text{for} \  \bar X\in\D, \ r\in(0,1).
\end{equation}
Then $\psi^*_{\bar X,r}$ is a minimizer of
\begin{equation*}\label{J scale}
	\J_{\bar X,r}(\phi):=\int_{\D_{\bar X,r}}(\bar y+ry)\left[\tilde\MG_r\left(\frac{\n\phi}{\bar y+ry},\phi\right)
	+\ld^2\chi_{\{\phi>0\}}\right]dX
\end{equation*}
over the admissible set $\K_{\bar X,r,\psi^{\sharp}}:=\left\{\phi\in H^1(\D_{\bar X,r}):\phi=\psi^*_{\bar X,r} \text{ on } \pt\D_{\bar X,r}\right\}$, where
\begin{equation}\label{Gsacle}
	\tilde\MG_r(\bp,z):=\MG(-Q\bp,Q-Qrz) \q\text{and}\q
	\D_{\bar X,r}:=\left\{(X-\bar X)/r: X\in\D\right\}.
\end{equation}
The straightforward computations show that $\psi^*_{\bar X,r}$ satisfies 
\begin{equation}\label{ellipticsub scale}
	\pt_i\pt_{p_i}\tilde\MG_r\left(\frac{\n\psi^*_{\bar X,r}}{\bar y+ry},\psi^*_{\bar X,r}\right)-(\bar y+ry)\pt_z\tilde\MG_r\left(\frac{\n\psi^*_{\bar X,r}}{\bar y+ry},\psi^*_{\bar X,r}\right)\geq0
\end{equation}
in $\D_{\bar X,r}$, and
\begin{equation}\label{ellipticequ scale}
	\pt_i\pt_{p_i}\tilde\MG_r\left(\frac{\n\psi^*_{\bar X,r}}{\bar y+ry},\psi^*_{\bar X,r}\right)-(\bar y+ry)\pt_z\tilde\MG_r\left(\frac{\n\psi^*_{\bar X,r}}{\bar y+ry},\psi^*_{\bar X,r}\right)=0
\end{equation}
in $\D_{\bar X,r}\cap\{\psi^*_{\bar X,r}>0\}$. 

The properties of $\MG$ in Proposition \ref{Gproperties pro} can be translated into the properties of $\tilde\MG_{r}$ in an obvious fashion:
\begin{align}
	&\tilde{\mathfrak b}_\ast|\bp|^2 \leq p_i \pt_{p_i} \tilde{\MG}_r (\bp, z)\leq \tilde{\mathfrak b}^\ast |\bp|^2,\label{eq:Gm_pdp}\\
	&\tilde{\mathfrak b}_\ast |\xi|^2 \leq \xi_i \pt_{p_ip_j}\tilde{\MG}_r(\bp,z)\xi_j\leq \tilde{\mathfrak b}^\ast \epsilon^{-1}|\xi|^2 \quad\text{for all }\mathbf\xi\in \R^2, \label{eq:Gm_dpp}\\
	&\pt_z\tilde\MG_r(\bp, z)\leq 0\quad \text{in } \{(\bp, z): \bp\in\R^2, z\in \mathbb R\},\label{eq:Gm_dz0}\\
	&\epsilon^{-1}|\pt_z \tilde\MG_{r}(\bp,z)|+|\bp\cdot \pt_{\bp z} \tilde{\MG}_r(\bp,z)|\leq \tilde{\delta'} r,\q
	|r\pt_{\bp z}\tilde\MG_{r}(\bp,z)|+|\pt_{zz}\tilde\MG_{r}(\bp,z)|\leq \tilde{\delta}r^2,\label{eq_rescale0}\\
	&\tilde\MG_r(\mathbf{0},0)=0,\q \tilde{\MG}_r(\bp,z)\geq\frac{\tilde{\mathfrak b}_*}{2}|\bp|^2-\tilde\delta'\v r z_+, \label{eq_rescale2}
\end{align}
where $\tilde{\mathfrak b}_\ast := Q^2 \mathfrak b_\ast$, $\tilde{\mathfrak b}^\ast:= Q^2\mathfrak b^\ast$, $\tilde{\delta'}:=\delta' Q$,  $\tilde{\delta}:=\delta Q^2$ with $\mathfrak b_\ast$, $\mathfrak b^\ast$,  $\delta'$, $\delta$ as in Proposition \ref{Gproperties pro}.  Note that from the explicit expressions of $\mathfrak b_\ast$, $\mathfrak b^\ast$, $\d'$, and $\delta$ in Proposition \ref{Gproperties pro} and Remark \ref{rmk:Q} one has 
\begin{align}\label{eq:BQ2}
	\tilde{\mathfrak b}_\ast, \tilde{\mathfrak b}^\ast, \tilde\delta',\tilde\delta \sim Q,
\end{align}
where $A\sim B$ means that $C^{-1}B\leq A\leq CB$ for some $C=C(\gamma,\epsilon, \bar u)$.

We note that after the renormalization $\psi^\ast_{\bar X,r}$ satisfies 
\begin{align*}
a^{ij}_r\pt_{ij} \psi^\ast_{\bar X,r}-r\pt_{p_ip_2}\tilde\MG_r\left(\frac{\nabla\psi^\ast_{\bar X,r}}{\bar y+ry}, \psi^\ast_{\bar X,r}\right)\frac{\pt_i\psi^\ast_{\bar X,r}}{\bar y+ry}=(\bar y+ry)f_r \quad\text{in }  \{\psi^\ast_{\bar X,r}>0\},
\end{align*}
where
\begin{align*}
	a^{ij}_r:=\pt_{p_ip_j}\tilde{\MG}_r\left(\frac{\nabla\psi^\ast_{\bar X,r}}{\bar y+ry}, \psi^\ast_{\bar X,r}\right)
\end{align*}
and
\begin{align*}
	f_r:=&-\pt_i\psi^\ast_{\bar X,r} \pt_{p_iz}\tilde{\MG}_r\left(\frac{\nabla\psi^\ast_{\bar X,r}}{\bar y+ry}, \psi^\ast_{\bar X,r}\right)
	+(\bar y+ry) \pt_z\tilde{\MG}_r\left(\frac{\nabla\psi^\ast_{\bar X,r}}{\bar y+ry}, \psi^\ast_{\bar X,r}\right).
\end{align*}
From \eqref{eq_rescale0} and \eqref{eq:BQ2} we conclude that there exist  constants $C_\gamma=C_\g(\g)>1$ and $C=C(\gamma,\epsilon,\bar u, \bar H)>0$ such that
\begin{align*}
	1\leq \frac{\tilde{\mathfrak b}^\ast}{\tilde{\mathfrak b}_\ast}\leq \frac{C_\gamma}{ \epsilon} \q\text{and}\q {\left|\frac{\pt_z\tilde{\MG}_r}{\tilde{\mathfrak b}_\ast}\right|+}
	\left|\frac{f_r}{\tilde{\mathfrak b}_\ast}\right|\leq Cr.
\end{align*}

The following comparison principle for elliptic equation \eqref{ellipticequ scale} is important for the proof of the Lipschitz regularity and nondegeneracy of minimizers. 

\begin{lemma}\label{comparison principle}
	Given a bounded domain $\tilde\D\subset \R^2$ and $\bar y\in(0,\bar H)$. For $r>0$, let $\psi^*\in H^1(\tilde \D)$ be a subsolution of the equation \eqref{ellipticequ scale} in the sense of
	\begin{equation*}\label{psi*_subsolution}
		\int_{\tilde\D}\left[\pt_{p_i}\tilde\MG_r\left(\frac{\n\psi^*}{\bar y+ry},\psi^*\right)\pt_i\eta+(\bar y+ry)\pt_z\tilde\MG_r\left(\frac{\n\psi^*}{\bar y+ry},\psi^*\right)\eta\right]\leq0
		\q\text{for all } \eta\in C_0^\infty(\tilde\D),
	\end{equation*} 
	and $\phi\in H^1(\tilde\D)$ be a solution of \eqref{ellipticequ scale} in the sense of
 \begin{equation*}\label{psi*_subsolution}
 	\int_{\tilde\D}\left[\pt_{p_i}\tilde\MG_r\left(\frac{\n\phi}{\bar y+ry},\phi\right)\pt_i\eta+(\bar y+ry)\pt_z\tilde\MG_r\left(\frac{\n\phi}{\bar y+ry},\phi\right)\eta\right]=0
 	\q\text{for all } \eta\in C_0^\infty(\tilde\D). 
 \end{equation*}
  Assume that $\psi^*\leq\phi$ on $\pt \tilde\D$. Then $\psi^*\leq\phi$ in $\tilde\D$, as long as $r\leq r^*$ for some $r^*>0$ sufficiently small depending on $\g,\, \v,\, \bar u,\, \bar H$, and $d:={\rm diam}\tilde\D$. 
\end{lemma}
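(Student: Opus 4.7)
The plan is a standard energy/testing argument. Set $w:=\psi^\ast-\phi$; since $w\leq 0$ on $\pt\tilde\D$, the function $\eta:=w_+$ belongs to $H^1_0(\tilde\D)$ and is a legitimate test function in both weak formulations. Subtracting the weak subsolution inequality from the weak equation yields
\begin{equation*}
\int_{\tilde\D}\!\Bigl[\bigl(\pt_{p_i}\tilde\MG_r\bigl(\tfrac{\n\psi^\ast}{\bar y+ry},\psi^\ast\bigr)-\pt_{p_i}\tilde\MG_r\bigl(\tfrac{\n\phi}{\bar y+ry},\phi\bigr)\bigr)\pt_i w_+ +(\bar y+ry)\bigl(\pt_z\tilde\MG_r\bigl(\tfrac{\n\psi^\ast}{\bar y+ry},\psi^\ast\bigr)-\pt_z\tilde\MG_r\bigl(\tfrac{\n\phi}{\bar y+ry},\phi\bigr)\bigr)w_+\Bigr]\leq 0.
\end{equation*}

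Next I would linearize the differences via the fundamental theorem of calculus. Writing $\bp_s:=s\tfrac{\n\psi^\ast}{\bar y+ry}+(1-s)\tfrac{\n\phi}{\bar y+ry}$ and $z_s:=s\psi^\ast+(1-s)\phi$, and setting
\begin{equation*}
A^{ij}:=\!\int_0^1\!\pt_{p_ip_j}\tilde\MG_r(\bp_s,z_s)\,ds,\quad B^i:=\!\int_0^1\!\pt_{p_iz}\tilde\MG_r(\bp_s,z_s)\,ds,\quad D:=\!\int_0^1\!\pt_{zz}\tilde\MG_r(\bp_s,z_s)\,ds,
\end{equation*}
the inequality becomes, on restricting to $\{w>0\}$,
\begin{equation*}
\int_{\{w>0\}}\!\Bigl[\tfrac{A^{ij}}{\bar y+ry}\pt_i w_+\pt_j w_++B^i w_+\pt_i w_++B^j w_+\pt_j w_++(\bar y+ry)D\,w_+^2\Bigr]\,dX\leq 0.
\end{equation*}
By the uniform ellipticity \eqref{eq:Gm_dpp} one has $A^{ij}\xi_i\xi_j\geq \tilde{\mathfrak b}_\ast|\xi|^2$, while \eqref{eq_rescale0} gives $|B^i|\leq \tilde\delta\, r$ and $|D|\leq \tilde\delta\,r^2$. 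Assuming $\bar y+ry$ stays between two positive constants $c_1,c_2$ on $\tilde\D$ (depending on $\bar y,\bar H,r,d$, with $c_1$ controllable once $r\leq \bar y/(2d)$), the coercive term dominates by
\begin{equation*}
\tfrac{\tilde{\mathfrak b}_\ast}{c_2}\|\n w_+\|_{L^2(\tilde\D)}^{2}\leq 2\tilde\delta\,r\,\|w_+\|_{L^2(\tilde\D)}\|\n w_+\|_{L^2(\tilde\D)}+c_2\tilde\delta\,r^2\|w_+\|_{L^2(\tilde\D)}^{2}.
\end{equation*}

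Since $w_+\in H^1_0(\tilde\D)$, the Poincar\'e inequality yields $\|w_+\|_{L^2}\leq d\|\n w_+\|_{L^2}$, and the previous estimate becomes
\begin{equation*}
\tfrac{\tilde{\mathfrak b}_\ast}{c_2}\|\n w_+\|_{L^2}^{2}\leq \bigl(2\tilde\delta\,r\,d+c_2\tilde\delta\,r^2 d^{2}\bigr)\|\n w_+\|_{L^2}^{2}.
\end{equation*}
Recalling from \eqref{eq:BQ2} that $\tilde{\mathfrak b}_\ast$ and $\tilde\delta$ depend only on $\g,\v,\bar u$, one chooses $r^\ast=r^\ast(\g,\v,\bar u,\bar H,d)>0$ so small that the parenthesis is strictly smaller than $\tilde{\mathfrak b}_\ast/c_2$. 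Then $\n w_+\equiv 0$, and Poincar\'e forces $w_+\equiv 0$, i.e.\ $\psi^\ast\leq\phi$ in $\tilde\D$. The main technical point is the linearization step together with the observation that \emph{every} non-coercive contribution carries at least one positive power of $r$, which is precisely what allows them to be absorbed for sufficiently small $r$; no structural ``good sign'' of $\pt_{zz}\tilde\MG_r$ is needed thanks to this small factor.
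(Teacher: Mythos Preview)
Your argument is correct and matches the paper's proof: test with $(\psi^*-\phi)_+$, linearize the nonlinearity, use uniform ellipticity for the leading term, and absorb the cross terms (each carrying a positive power of $r$) via Poincar\'e for $r$ small. The paper linearizes in two steps (first in the gradient variable at fixed $z=\psi^*$, then in $z$ at fixed gradient $\nabla\phi/(\bar y+ry)$) rather than along the joint segment as you do, but this is purely cosmetic. One small slip: your side condition $r\leq \bar y/(2d)$ to control the lower bound $c_1$ would make $r^*$ depend on $\bar y$, which the lemma does not permit; fortunately your displayed estimate uses only the \emph{upper} bound $c_2$ (which can be taken as $\bar H+rd$, independent of $\bar y$), so the remark about $c_1$ can simply be dropped.
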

\begin{proof}
	Take $\eta=(\psi^*-\phi)^+$. Then
	\begin{align*}
	&\int_{\tilde\D}\left[\pt_{p_i}\tilde\MG_r\left(\frac{\n\psi^*}{\bar y+ry},\psi^*\right)-\pt_{p_i}\tilde\MG_r\left(\frac{\n\phi}{\bar y+ry},\phi\right)\right]\pt_i\eta
	\\
	+&\int_{\tilde\D}(\bar y+ry)\left[\pt_z\tilde\MG_r\left(\frac{\n\psi^*}{\bar y+ry},\psi^*\right)-\pt_z\tilde\MG_r\left(\frac{\n\phi}{\bar y+ry},\phi\right)\right]\eta\leq 0.
	\end{align*}
	Using the convexity of $\tilde\MG_r$ in \eqref{eq:Gm_dpp} and the triangle inequality yields
	\begin{align*}
		&\int_{\tilde\D} \left[\pt_{p_i}\tilde\MG_r\left(\frac{\n\psi^*}{\bar y+ry},\psi^*\right)-\pt_{p_i}\tilde\MG_r\left(\frac{\n\phi}{\bar y+ry},\phi\right)\right]\pt_i\eta\\
		\geq& \tilde{\mathfrak b}_*\int_{\tilde\D}\frac{|\n\eta|^2}{\bar y+ry}+\int_{\tilde\D}\left[\pt_{p_i}\tilde\MG_r\left(\frac{\n\phi}{\bar y+ry},\psi^*\right)-\pt_{p_i}\tilde\MG_r\left(\frac{\n\phi}{\bar y+ry},\phi\right)\right]\pt_i\eta,
	\end{align*}
	where the last integral in the above inequality can be estimated from \eqref{eq_rescale0} as
	\begin{align*}
		&\int_{\tilde\D}\left[\pt_{p_i}\tilde\MG_r\left(\frac{\n\phi}{\bar y+ry},\psi^*\right)-\pt_{p_i}\tilde\MG_r\left(\frac{\n\phi}{\bar y+ry},\phi\right)\right]\pt_i\eta\\
		=&\int_{\tilde\D}\left[(\psi^*-\phi)\int_0^1\pt_{p_iz}\tilde\MG_r\left(\frac{\n\phi}{\bar y+ry},s\psi^*+(1-s)\phi\right)ds\right]\pt_i\eta
		\geq-\tilde\d r\int_{\tilde\D}|\n\eta|\eta.
	\end{align*}
	Similarly, using \eqref{eq_rescale0} and the triangle inequality yields
	$$\int_{\tilde\D}(\bar y+ry)\left[\pt_z\tilde\MG_r\left(\frac{\n\psi^*}{\bar y+ry},\psi^*\right)-\pt_z\tilde\MG_r\left(\frac{\n\phi}{\bar y+ry},\phi\right)\right]\eta
	\geq -\tilde\d r\int_{\tilde\D}(|\n\eta|\eta+r(\bar y+ry)\eta^2).$$
	Combining the above estimates together gives
	\begin{equation}\label{label_3}
		\int_{\tilde\D}\frac{|\n\eta|^2}{\bar y+ry}\leq \frac{2\tilde\d r}{\tilde{\mathfrak b}_*}\int_{\tilde\D}(|\n\eta|\eta+r(\bar y+ry)\eta^2).
	\end{equation}
	Since the domain $\tilde\D\subset\R^2$ is bounded, applying the Cauchy-Schwarz inequality and the Poincar\'{e} inequality to $\eta(x,\cdot)$ for each $x$ to \eqref{label_3}, one has
	$$\int_{\tilde\D}|\n\eta|^2\leq \frac{1}2\int_{\tilde\D}|\n\eta|^2+Cd^2(\bar H+rd)^2\frac{\tilde\d r^2}{\tilde{\mathfrak b}_*}\left(\frac{\tilde\d}{\tilde{\mathfrak b}_*}+1\right)\int_{\tilde \D}|\n\eta|^2,$$
	where $d:={\rm diam}\tilde\D$ and $C>0$ is a universal constant. Thus in view of \eqref{eq:BQ2}, if $r\leq r^*$ for some $r^*=r^*(\g,\v,\bar u,\bar H, d)$  sufficiently small, then $\n\eta=0$ in $\tilde\D$, which implies that $\eta=0$ in $\tilde\D$. This completes the proof of the lemma.
\end{proof}

Based on the above discussions, we can now use the same arguments as in  \cite{L2023_axi_small} to derive the following conclusions. The first two lemmas give the decay rate of a minimizer away from the free boundary.  

\begin{lemma}\label{linear growth}
	Let $\psi$ be a minimizer of \eqref{minimization problem}. 
	Let $\bar{X}=(\bar x,\bar y)\in\D\cap\{\psi<Q\}$ satisfy
	${\rm dist}(\bar{X},\G_{\psi})\leq \bar r{\rm dist}(\bar{X},\pt\D)$, where $\bar r=\bar r(\gamma,\epsilon, \bar u, \frac{\Ld}{Q}, 
	\bar H)\in(0,1/4)$. Then there exists a constant  $C=C(\gamma,\epsilon, \bar u,
	\bar H)>0$ such that
	$$Q-\psi(\bar{X})\leq C\Ld\bar y {\rm dist}(\bar{X},\G_{\psi}).$$
\end{lemma}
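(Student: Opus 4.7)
The plan is to normalize via the rescaling $\psi^\ast_{\bar X, r}$ from \eqref{def:psi*} at $r:=\mathrm{dist}(\bar X,\G_\psi)$, apply Harnack's inequality in the interior of $B_1(0)$, compare $\psi^\ast_{\bar X, r}$ from below with an explicit barrier built in a thin annulus against $\pt B_1(0)$, and finally read off the resulting Hopf-type estimate at a rescaled free boundary point against the free boundary condition $|\n\psi/y|=\Ld$ of Lemma \ref{free BC}.

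Writing $r:=\mathrm{dist}(\bar X,\G_\psi)$ and $M:=\psi^\ast_{\bar X, r}(0)=(Q-\psi(\bar X))/(Qr)$, the goal becomes $M\leq C\Ld\bar y/Q$. Since $\N_0\subset\pt\D$, the hypothesis $r\leq\bar r\,\mathrm{dist}(\bar X,\pt\D)$ with $\bar r<1/4$ yields both $r\leq\bar r\bar y$ and $B_{1/\bar r}(0)\subset\D_{\bar X, r}$, so the weight $\bar y+ry$ is comparable to $\bar y$ throughout $B_1(0)$. The rescaled function $\psi^\ast_{\bar X, r}$ is strictly positive in $B_1(0)$ and solves the uniformly elliptic equation \eqref{ellipticequ scale} (ellipticity from \eqref{eq:Gm_dpp}, lower-order terms of size $O(r)$ from \eqref{eq_rescale0}), while $\psi^\ast_{\bar X, r}(X_0)=0$ at some $X_0\in\pt B_1(0)$ corresponding to the closest rescaled free boundary point.

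For the main estimate, choose $\bar r$ small enough that the $O(r)$ lower-order perturbations in \eqref{ellipticequ scale} are absorbed into the linear theory. The interior Harnack inequality then yields $\inf_{B_{3/4}(0)}\psi^\ast_{\bar X, r}\geq c_0 M$ for a universal $c_0>0$. Let $w$ solve \eqref{ellipticequ scale} in the annulus $A:=B_1(0)\setminus\overline{B_{3/4}(0)}$ with boundary data $w=c_0 M$ on $\pt B_{3/4}(0)$ and $w=0$ on $\pt B_1(0)$; then $0\leq w\leq c_0 M$ on $A$, and Hopf's lemma at $X_0$ gives $|\pt_\nu w(X_0)|\geq c_1 c_0 M$ where $\nu$ is the outer unit normal to $B_1(0)$ at $X_0$ and $c_1>0$ is universal. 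Applying Lemma \ref{comparison principle} with $w$ as subsolution and $\psi^\ast_{\bar X, r}$ as solution (the boundary inequality $w\leq\psi^\ast_{\bar X, r}$ on $\pt A$ follows from the Harnack step on $\pt B_{3/4}(0)$ and from $w=0\leq\psi^\ast_{\bar X, r}$ on $\pt B_1(0)$) gives $w\leq\psi^\ast_{\bar X, r}$ throughout $A$; since $\psi^\ast_{\bar X, r}(X_0)=w(X_0)=0$, this forces $|\pt_\nu\psi^\ast_{\bar X, r}(X_0)|\geq c_1 c_0 M$. Meanwhile the free boundary condition of Lemma \ref{free BC}, rescaled, reads $|\n\psi^\ast_{\bar X, r}(X_0)|=\Ld\,y(X_0)/Q\leq(5/4)\Ld\bar y/Q$, so $|\pt_\nu\psi^\ast_{\bar X, r}(X_0)|\leq(5/4)\Ld\bar y/Q$. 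Combining the two estimates yields $M\leq C\Ld\bar y/Q$, which rescales back to the asserted inequality.

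The main obstacle is the pointwise use of \eqref{free BC}: that identity is a priori only in an integral/measure-theoretic sense, so extracting the gradient value at $X_0$ requires a regularization. Following \cite{LSTX2023,L2023_axi_small}, one instead runs the barrier comparison on slightly shifted annuli whose outer spheres touch the rescaled free boundary $\{\psi^\ast_{\bar X, r}=0\}$ from inside, and passes to the limit so that the gradient bound emerges from the integrated form of \eqref{free BC}. The smallness requirement on $\bar r$ then depends on $\gamma,\,\v,\,\bar u,\,\Ld/Q,\,\bar H$ so that Harnack's inequality, the barrier construction, and the limit passage all hold simultaneously.
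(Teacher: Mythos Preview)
Your outline captures the right geometric picture, but the final step has a genuine gap that your workaround does not close. At this point in the argument only H\"older continuity of $\psi$ is available (Lemma~\ref{lemmas}(iii)); the Lipschitz estimate is Proposition~\ref{Lipschitz}, which is proved \emph{after} and \emph{using} the present lemma. Consequently $\nabla\psi^\ast_{\bar X,r}(X_0)$ need not exist, and the boundary identity of Lemma~\ref{free BC} is only available in the weak integrated form --- as Remark~\ref{rmk:ld Ld} makes explicit, the pointwise relation $|\nabla\psi/y|=\Lambda$ on $\Gamma_\psi$ presupposes that $\Gamma_\psi$ is smooth and $\psi$ is smooth nearby. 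Your suggested fix (shifted annuli plus the integral condition) is not what \cite{LSTX2023,L2023_axi_small} actually do, and it is unclear how a \emph{pointwise} gradient bound at a single point $X_0$ could be extracted from the averaged statement in Lemma~\ref{free BC}: the test field $\eta$ there is compactly supported and the identity is a limit over whole level sets $\{\psi=Q-s\}$, not a local one.

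The argument in \cite{ACF84} (and in the references the paper defers to) avoids the free boundary condition altogether and uses the \emph{minimality} of $\psi$ directly. In the rescaled picture one centres a ball $B_\rho$ at the free boundary point $X_0\in\partial B_1(0)$, lets $v$ solve the truncated equation \eqref{ellipticequ scale} in $B_\rho$ with $v=\psi^\ast$ on $\partial B_\rho$, and compares $\J_{\bar X,r}(\psi^\ast)$ with $\J_{\bar X,r}(v)$. Since $\psi^\ast$ is a subsolution (cf.\ \eqref{ellipticsub scale}) one has $\psi^\ast\le v$ by Lemma~\ref{comparison principle}; convexity \eqref{eq:Gm_dpp} turns the energy gap into a lower bound for $\int_{B_\rho}|\nabla(v-\psi^\ast)|^2$, while minimality bounds the same quantity from above by $\lambda_\epsilon^2\,\bar y\,|\{\psi^\ast=0\}\cap B_\rho|$ (up to the weight $\bar y+ry\sim\bar y$). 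Your Harnack step for $v$ --- which is the part of your argument that survives --- then gives $v\ge c_0M$ on the zero set, and a trace-type inequality closes the loop to $M\le C\Lambda\bar y/Q$. This route needs only the H\"older regularity and the subsolution property already in hand, which is why it can precede the Lipschitz bound; your Hopf--free-boundary-condition route implicitly needs the Lipschitz bound it is meant to help establish.
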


\begin{lemma}\label{nondegeneracy}(Nondegeneracy) 
	Let $\psi$ be a minimizer of \eqref{minimization problem}. Then for any $\vartheta>1$ and any $0<a<1$, there exist positive constants $c_a,\, r_*>0$ depending on $\gamma,\, \epsilon,\, \bar u,\, \bar H,\, \vartheta$, and $a$,  such that for any $B_r(\bar{X})\subset\D$ with $\bar X=(\bar x,\bar y)$ and $r\leq \min\{r_*,c_a\frac{\Lambda}{Q}\}\bar y$, if
	\begin{equation*}
		\frac 1r\left(\dashint_{B_r(\bar{X})}|Q-\psi|^\vartheta\right)^{\frac 1\vartheta}\leq c_a\Ld \bar y,
	\end{equation*}
	then $\psi=Q$ in $B_{ar}(\bar{X})$.
\end{lemma}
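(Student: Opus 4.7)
The approach is the classical Alt--Caffarelli nondegeneracy estimate, adapted to the weighted quasilinear inhomogeneous variational problem \eqref{minimization problem} via the structural bounds of Proposition~\ref{Gproperties pro}. Passing to the rescaled function $\psi^\ast:=\psi^\ast_{\bar X,r}$ defined in \eqref{def:psi*}, the hypothesis becomes $\bigl(\dashint_{B_1}|\psi^\ast|^\vartheta\bigr)^{1/\vartheta} \leq c_a\Lambda\bar y/Q$, and the desired conclusion becomes $\psi^\ast\equiv 0$ on $B_a(0)$. By Lemma~\ref{lemmas}, $\psi^\ast\geq 0$ and $\psi^\ast$ is a subsolution of \eqref{ellipticsub scale}.

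The first step is a sup-norm bound for $\psi^\ast$. Rewriting \eqref{ellipticsub scale} in nondivergence form yields a linear uniformly elliptic differential inequality whose ellipticity ratio is controlled by $\gamma,\epsilon$ via \eqref{eq:Gm_pdp}--\eqref{eq:Gm_dpp} and whose lower-order and inhomogeneous coefficients are of size $O(r/\bar y)=O(r_\ast)$ once $r\leq r_\ast\bar y$ is used to render $\bar y+ry\sim \bar y$ on $B_1(0)$. De Giorgi--Moser local boundedness then gives, with $\rho:=(1+a)/2$,
\[
M := \sup_{B_\rho} \psi^\ast \;\leq\; C\Bigl(\dashint_{B_1}|\psi^\ast|^\vartheta\Bigr)^{1/\vartheta} + Cr \;\leq\; 2C\,\frac{c_a\Lambda\bar y}{Q},
\]
where $C=C(\gamma,\epsilon,\bar u,\bar H,\vartheta,a)$ and the last inequality uses $r\leq c_a\Lambda\bar y/Q$ from the hypothesis.

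Next I would construct the competitor
\[
\phi(X) := \begin{cases} \min\{\psi^\ast(X),w(|X|)\}, & X\in B_\rho,\\ \psi^\ast(X), & X\in B_1\setminus B_\rho, \end{cases}\qquad w(s):=M\,\max\Bigl\{0,\frac{s-a}{\rho-a}\Bigr\}.
\]
Since $w(\rho)=M\geq \sup_{B_\rho}\psi^\ast$, $\phi$ is continuous across $\partial B_\rho$, lies in $\K_{\bar X,r,\psi^\sharp}$, satisfies $0\leq\phi\leq\psi^\ast$, and vanishes on $B_a$ (because $w|_{B_a}=0$ and $\psi^\ast\geq 0$). Inserting $\phi$ into $\J_{\bar X,r}(\psi^\ast) \leq \J_{\bar X,r}(\phi)$, the integrand of the difference of $\tilde\MG_r$-terms is supported on $B_\rho\cap\{\psi^\ast>w\}$, where $\phi=w$ and $\nabla\phi=\nabla w$ has magnitude $O(M/(\rho-a))$; the bounds \eqref{eq:Gm_pdp}, \eqref{eq_rescale0}, \eqref{eq_rescale2} then estimate that difference by $O(M^2/\bar y^2)+O(rM)$. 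The $\chi$-contribution on the left is bounded below by $\tfrac12\lambda^2\bar y|\{\psi^\ast>0\}\cap B_a|$, producing
\[
\lambda^2|\{\psi^\ast>0\}\cap B_a| \;\leq\; C\Bigl(\frac{M^2}{\bar y^2}+rM\Bigr) \;\leq\; \frac{Cc_a^2\Lambda^2}{Q^2}\bigl(1+\bar H^2\bigr).
\]
The equivalence $\lambda_\epsilon^2\sim \Lambda^2/Q$ in \eqref{ld_Ld} converts this into $|\{\psi^\ast>0\}\cap B_a| \leq Cc_a^2(1+\bar H^2)/Q$; choosing $c_a$ small enough and $r_\ast$ correspondingly small makes the right side strictly less than $|B_a|$, forcing $|\{\psi^\ast>0\}\cap B_a|=0$. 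Continuity of $\psi^\ast$ (Lemma~\ref{lemmas}(iii)) and openness of $\{\psi^\ast>0\}$ then upgrade this to $\psi^\ast\equiv 0$ on $B_a(0)$, i.e.\ $\psi \equiv Q$ on $B_{ar}(\bar X)$.

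The main technical hurdle is the De Giorgi--Moser sup-bound in the first step: all Moser constants must depend only on $(\gamma,\epsilon,\bar u,\bar H,\vartheta,a)$ and not on the base point $\bar y$ itself. The hypothesis $r\leq r_\ast\bar y$ is precisely what enables this, by making the weight $\bar y+ry\sim \bar y$ comparable on $B_1(0)$ and rendering all inhomogeneous contributions $O(r_\ast)$-small. A secondary subtlety lies in the competitor choice: $\phi=\min(\psi^\ast,w)$ ensures that on $\{\psi^\ast>w\}$ the gradient $\nabla\phi$ reduces to the tame radial $\nabla w$, so the energy comparison never picks up an uncontrolled $|\nabla\psi^\ast|^2$ contribution---the nonnegative $\tilde\MG_r$-value at $\psi^\ast$ is simply discarded in the lower bound.
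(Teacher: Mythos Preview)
Your overall strategy---rescaling to $\psi^\ast_{\bar X,r}$, establishing a sup bound via De~Giorgi--Moser, and comparing energies against the truncated competitor $\phi=\min(\psi^\ast,w)$---is indeed the standard Alt--Caffarelli--Friedman route, and is consistent with what the paper invokes (the proof is deferred to \cite{L2023_axi_small}). Steps~1--4 are essentially correct, modulo bookkeeping (for instance the factor $1/Q$ in your final displayed bound should cancel against the one coming from \eqref{ld_Ld}, leaving $|\{\psi^\ast>0\}\cap B_a|\le Cc_a^2(1+\bar H^2)$).

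The genuine gap is Step~5. The implication ``$|\{\psi^\ast>0\}\cap B_a|<|B_a|$ forces $|\{\psi^\ast>0\}\cap B_a|=0$'' is false as stated: nothing prevents the positivity set from occupying, say, half of $B_a$. What the energy comparison actually yields, if you retain the gradient contribution from the lower bound \eqref{eq_rescale2} rather than discarding it, is the pair of estimates
\[
\tilde{\mathfrak b}_\ast\int_{B_a}\frac{|\nabla\psi^\ast|^2}{\bar y+ry}\;+\;\lambda^2\,\bar y\,|\{\psi^\ast>0\}\cap B_a|\;\le\;C\Big(\frac{\tilde{\mathfrak b}^\ast M^2}{\bar y}+\tilde\delta' rM\bar y\Big).
\]
From the second term you get $|\{\psi^\ast>0\}\cap B_a|\le Cc_a^2$; choosing $c_a$ small then guarantees $|\{\psi^\ast=0\}\cap B_a|\ge\tfrac12|B_a|$, so the Poincar\'e inequality (with constant depending only on $a$) applies and gives $\|\psi^\ast\|_{L^\vartheta(B_a)}\le C\|\nabla\psi^\ast\|_{L^2(B_a)}$. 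Combined with the gradient bound above and $\tilde{\mathfrak b}_\ast\sim Q$, this shows that the rescaled $L^\vartheta$ average over $B_a$ is bounded by a \emph{fixed fraction} of the original one over $B_1$. Iterating this decay at dyadic scales forces $\psi^\ast(0)=0$; since the argument is uniform over centers in $B_a$, you conclude $\psi^\ast\equiv0$ on $B_a$. Without this iteration (or an independently proven density lower bound for $\{\psi^\ast>0\}$, which in the ACF framework typically \emph{relies} on nondegeneracy), your final step does not close.
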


The next proposition shows the Lipschitz regularity of minimizers.

\begin{proposition}\label{Lipschitz}
	Let $\psi$ be a minimizer of \eqref{minimization problem}, then $\psi\in C_{{\rm loc}}^{0,1}(\D)$. Moreover, for any connected domain ${\D'}\Subset \D$ containing a free boundary point, the Lipschitz constant of $\psi$ in $\D'$ is estimated by $C\Ld$, where $C$ depends on $\gamma,\, \epsilon,\, \bar u,\,\bar H, \, \frac{\Ld}{Q},\, \D'$ and $\D$.
\end{proposition}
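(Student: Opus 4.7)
The plan is to combine the linear growth estimate from Lemma \ref{linear growth} with standard interior gradient estimates for the uniformly elliptic equation \eqref{elliptic equG}, which after the subsonic truncation has ellipticity constants controlled by \eqref{eq:convex0}, and lower-order coefficients controlled by \eqref{eq:upper_pzzG}. Since $\D'\Subset\D$, the set $\D'$ is bounded away both from $\partial\D$ and from the axis $\{y=0\}$, so the factor $1/y$ appearing in \eqref{elliptic equG} is uniformly bounded above and below on $\D'$. Thus it suffices to produce a pointwise bound $|\nabla\psi(\bar X)|\leq C\Lambda$ at each $\bar X\in \D'\cap\{\psi<Q\}$ (the gradient vanishes a.e.\ on $\{\psi=Q\}$, cf.\ Lemma \ref{lemmas}(ii)), with $C$ depending on the stated quantities.

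For each such $\bar X=(\bar x,\bar y)$ set $d_{\bar X}:=\operatorname{dist}(\bar X,\Gamma_\psi)$ and $D_{\bar X}:=\operatorname{dist}(\bar X,\partial\D)$, and distinguish two cases. In the \emph{near-boundary} case $d_{\bar X}\leq \bar r\, D_{\bar X}$, with $\bar r$ as in Lemma \ref{linear growth}, the distance ratio persists with comparable constants on the ball $B_{d_{\bar X}/2}(\bar X)$; hence Lemma \ref{linear growth} yields
\[
Q-\psi(X)\leq C\Lambda\,\bar y\,d_{\bar X}\qquad\text{for every }X\in B_{d_{\bar X}/2}(\bar X).
\]
In this ball the function $v:=Q-\psi>0$ satisfies the uniformly elliptic quasilinear equation \eqref{elliptic equG}. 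Applying standard interior gradient estimates (or, after linearization, De Giorgi--Nash--Moser together with the Cordes--Nirenberg/Gilbarg--Trudinger $C^{1,\alpha}$ machinery) to $v$ on $B_{d_{\bar X}/4}(\bar X)$ produces
\[
|\nabla\psi(\bar X)|=|\nabla v(\bar X)|\leq \frac{C}{d_{\bar X}}\sup_{B_{d_{\bar X}/2}(\bar X)}v\leq C\Lambda\,\bar y\leq C\Lambda\,\bar H,
\]
with $C$ depending on $\gamma,\epsilon,\bar u,\bar H$.

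In the \emph{interior} case $d_{\bar X}>\bar r\, D_{\bar X}$ we have $d_{\bar X}\geq \bar r\operatorname{dist}(\D',\partial\D)>0$, a lower bound that depends only on $\D',\D$. Since $B_{d_{\bar X}/2}(\bar X)\Subset\{\psi<Q\}$ and $0\leq\psi\leq Q$ in $\D$, interior gradient estimates for \eqref{elliptic equG} give $|\nabla\psi(\bar X)|\leq C Q/d_{\bar X}$, which we rewrite as $C(\Lambda/Q)^{-1}\Lambda$. Combining the two cases and the trivial bound on $\{\psi=Q\}$ yields $|\nabla\psi|\leq C\Lambda$ a.e.\ in $\D'$, hence $\psi\in C^{0,1}_{\rm loc}(\D)$ with Lipschitz constant $C\Lambda$ on $\D'$, where $C$ depends on $\gamma,\epsilon,\bar u,\bar H,\Lambda/Q,\D',\D$. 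The main technical point is verifying the quasilinear interior gradient estimate with ellipticity constants independent of the minimizer; this is where one crucially uses that the truncated $g_\epsilon$ gives uniform bounds \eqref{eq:convex0}--\eqref{eq:upper_pzzG} on $\partial_{\bp}\MG$, $\partial_{\bp\bp}\MG$ and on the lower-order term $\partial_z\MG$, so that the argument reduces to a routine application of the De Giorgi--Nash--Moser theory to the equation satisfied by each partial derivative of $\psi$.
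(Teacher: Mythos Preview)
Your proposal is correct and follows essentially the same route that the paper (implicitly, via the reference to \cite{L2023_axi_small}) uses: combine the linear growth estimate of Lemma~\ref{linear growth} with interior gradient estimates for the uniformly elliptic truncated equation, splitting into the near--free-boundary and far--from--free-boundary cases. Two minor points are worth tightening: (i) to apply Lemma~\ref{linear growth} at every $X\in B_{d_{\bar X}/2}(\bar X)$ you should start the near-boundary case with a slightly smaller threshold $\bar r'<\bar r$ so that the distance ratio persists with the required constant; (ii) in the near-boundary case the lower-order source $y\pt_z\MG$ contributes a term of order $Q$ to the gradient bound (since $\mathfrak b_\ast\sim Q^{-1}$ and $|\pt_z\MG|\sim\kappa_0$), so the constant there already depends on $\Lambda/Q$, not only on $\gamma,\epsilon,\bar u,\bar H$ --- this is harmless because the proposition allows that dependence, but your stated dependence in that case is slightly optimistic.
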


\begin{remark}\label{Lipschitz boundary}
	By the boundary estimate for the elliptic equation, $\psi$ is Lipschitz up to the $C^{1,\a}$ portion $\Sigma\subset \pt \D\cap\{y>0\}$ as long as the boundary data $\psi^\sharp\in C^{0,1}(\D\cup \Sigma)$.  Moreover, if $\D'$ is a subset of $\overline{\D}\cap\{y>0\}$ with ${\D'}\cap \pt\D$ being $C^{1,\a}$, ${\D'}\cap \D$ is connected, and ${\D'}$ contains a free boundary point, then $|\n\psi|\leq C\Ld$ in ${\D'}$.
\end{remark}

\subsection{Regularity of the free boundary}

With the help of the Lipschitz regularity and nondegeneracy of minimizers, we obtain the following regularity of the free boundary. For the proof we refer to \cite[Section 4.4]{L2023_axi_small}.

\begin{proposition}
Let $\psi$ be a minimizer of \eqref{minimization problem}. 
The free boundary $\G_\psi$ is locally $C^{k+1,\a}$ if $\MG(\bp,z)$ is $C^{k,\a}$ in its components ($k\geq1$, $0<\alpha<1$), and it is locally real analytic if $\MG(\bp,z)$ is real analytic.
\end{proposition}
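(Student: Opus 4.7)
The plan is to follow the Alt--Caffarelli framework for Bernoulli-type free boundary problems, adapted to the present weighted/variable-coefficient setting as in \cite{ACF84, LSTX2023, L2023_axi_small}. The argument has three main stages: (1) obtain an initial $C^{1,\alpha}$ regularity of $\G_\psi$; (2) use a (partial) hodograph transformation to convert the free boundary problem into a fixed boundary problem with oblique derivative condition; (3) bootstrap regularity and invoke analyticity theorems.

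\textbf{Step 1 ($C^{1,\alpha}$ regularity).} Working in a small ball $B_r(X_0)\subset \D$ around a free boundary point $X_0=(x_0,y_0)$ with $y_0>0$, I will first show that $\G_\psi$ has finite perimeter and that the set $\{\psi<Q\}$ has positive density on both sides of $\G_\psi$. The Lipschitz regularity (Proposition~\ref{Lipschitz}) together with the nondegeneracy (Lemma~\ref{nondegeneracy}) yields precisely these density bounds, and they imply that $\mathcal H^1 \mres \G_\psi$ is comparable to surface measure. Next I would run a blow-up argument: rescaling $\psi$ by $\psi^*_{X_0,r}$ as in \eqref{def:psi*} and sending $r\to 0$, the limit $\psi_0$ is a Lipschitz, nonnegative local minimizer of the constant-coefficient Alt--Caffarelli functional with the frozen coefficients $\pt_{p_ip_j}\tilde\MG_0(\bp,0)|_{\bp=0}$ and constant weight $y_0$; by the classification of blow-up limits (see \cite{AC81,ACF84}), $\psi_0$ is a half-plane solution of the form $\a(x\cdot\nu)^+$ with $\a$ determined by the free boundary condition of Lemma~\ref{free BC}. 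Then the Alt--Caffarelli flatness-implies-$C^{1,\alpha}$ theorem, applied in the version that tolerates lower-order terms (which here are controlled by the rescaled bounds \eqref{eq_rescale0}), shows $\G_\psi\cap B_{r/2}(X_0)$ is a $C^{1,\alpha}$ graph.

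\textbf{Step 2 (Hodograph transformation).} Having $\G_\psi\in C^{1,\alpha}$, locally near $X_0$ I straighten $\G_\psi$ with a partial hodograph/Legendre-type map: assuming after rotation that $\pt_{x}\psi\neq 0$ near $X_0$ (ensured by $|\nabla\psi/y|=\Lambda>0$ on $\G_\psi$), introduce new variables $(s,t)=(Q-\psi(x,y), y)$ and write $x=X(s,t)$. Then $X$ satisfies a quasilinear elliptic equation (the transform of \eqref{EL equ}) in the fixed half-ball $\{s>0\}$, with the free boundary condition $|\nabla\psi/y|^2=\Lambda^2$ (cf. Remark~\ref{rmk:ld Ld}) becoming the oblique/Neumann-type condition $X_s^2+X_t^2=\Lambda^{-2}t^{-2}(1+X_t^2)$ on $\{s=0\}$. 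The coefficients in the new equation are $C^{k-1,\alpha}$ in $\nabla X$ and $C^{k,\alpha}$ in $(s,t)$ whenever $\MG$ is $C^{k,\alpha}$, and the boundary condition is $C^{k,\alpha}$.

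\textbf{Step 3 (Bootstrap and analyticity).} Schauder theory for oblique-derivative problems (Agmon--Douglis--Nirenberg, or Lieberman's version for quasilinear oblique problems) applied to $X$ yields $X\in C^{k+1,\alpha}$ up to $\{s=0\}$, which translates back to $\G_\psi\in C^{k+1,\alpha}$. For the analytic case, since the transformed equation and boundary condition are real analytic in all their arguments whenever $\MG$ is real analytic, I invoke the Morrey--Friedman analyticity theorem for nonlinear elliptic boundary value problems, giving $X$ real analytic up to $\{s=0\}$, hence $\G_\psi$ is real analytic. The details are the same as in \cite[Section~4.4]{L2023_axi_small}, and so I would cite that reference after outlining the scheme.

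The main obstacle is \textbf{Step 1}: rigorously justifying the flatness-implies-$C^{1,\alpha}$ statement in the presence of the $y$-weight, the $z$-dependence of $\MG$, and the lower-order term $y\pt_z\MG$ in \eqref{EL equ}. The weight $y$ is bounded above and below in any neighborhood of $X_0$ (since $y_0>0$), so it behaves like a smooth positive coefficient, and the structural estimates \eqref{eq_rescale0}--\eqref{eq_rescale2} show that the lower-order contributions vanish in the blow-up limit. With these observations in hand, the argument reduces to a direct application of the classical theory.
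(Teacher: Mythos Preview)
Your proposal is correct and aligns with the paper's own treatment: the paper does not give an independent proof here but simply refers to \cite[Section~4.4]{L2023_axi_small}, and the three-stage Alt--Caffarelli scheme you outline (measure-theoretic estimates and blow-up to get $C^{1,\alpha}$, partial hodograph transform to a fixed-boundary oblique problem, then Schauder/Morrey bootstrap) is precisely the content of that reference. Your closing remark that the details are the same as in \cite[Section~4.4]{L2023_axi_small} is exactly how the paper handles it.
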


In our case the function $\mathcal{G}(\bp,z)$ is $C^{1,1}$ in its components (cf. Section \ref{subsec_existence}), thus the free boundary is locally $C^{2,\alpha}$ for any $\alpha\in (0,1)$.

\section{Fine properties for the free boundary problem}\label{sec fine properties}

In this section, we first obtain the uniqueness and monotonicity of the solution to the truncated problem with specific boundary conditions. The monotonicity of the solution is crucial to prove the graph property of the free boundary, as well as the equivalence between the Euler system and the stream function formulation. Then we establish the continuous fit and smooth fit of the free boundary. That is, the free boundary fits the outlet of the nozzle in a continuous differentiable  fashion.

\subsection{Uniqueness and monotonicity of the minimizer to problem \eqref{variation problem}} 
We take a specific boundary value $\psi^\sharp_{\mu,R}$ on $\pt\Omega_{\mu,R}$. Let $b_\mu\in(1,\bar H)$ be such that $N(b_\mu)=-\mu$, where $N$ is defined in \eqref{nozzle}. Choose a point $(-\mu,b_\mu')$ with  $0<k_\mu:=b_\mu-b'_\mu<(b_\mu-1)/4$. 
Let $H_*:=H_*(\Lambda)$ be such that $\Ld H_*^2e^{1-H_*}=Q$ if $\Ld>Q$, and $H_*=1$ if $\Ld\leq Q$. Define
	\begin{equation*}
		\psi^\dag(y) :=\left\{
		\begin{aligned}
			&\min\left(\Ld y^2e^{1-y}, Q\right)\quad &\text{if } H_*<1,\\
			&Q y^2e^{1-y}\quad &\text{if } H_*=1.
		\end{aligned}
		\right.
	\end{equation*}
	Let $s\in(3/2,2)$ be a fixed constant.
Set
\begin{eqnarray}\label{psi0}
	\psi^\sharp_{\mu, R}(x,y):=
	\left\{ \begin{split}
		& 0 &&\text { if } x=-\mu,\, 0<y<b_\mu',\\
		& Q\left(\frac{y-b_\mu'}{k_\mu}\right)^{s} && \text{ if } x=-\mu,\, b_\mu'\leq y\leq b_\mu,\\
		& Q &&\text{ if } (x,y)\in \N\cup ([0,R]\times \{1\}),\\
		& \psi^\dag(y) &&\text{ if } x=R,\, 0<y<1,\\
		& 0 &&\text{ if } -\mu\leq x\leq R,\, y=0.
	\end{split}\right.
\end{eqnarray}
	Note that $\psi^\sharp_{\mu, R}$ is continuous and it satisfies $0\leq \psi^\sharp_{\mu, R}\leq Q$.

\begin{lemma}\label{psi0 sup-subsol}
	The function $\psi^\sharp_{\mu, R}(R,\cdot)$ is a supersolution to \eqref{EL equ} in $\Omega_{\mu, R}$. Moreover, if $k_\mu$ is sufficiently small depending on $\gamma$, $\bar u$, and $\bar H$, then $\psi^\sharp_{\mu, R}(-\mu, \cdot)$ is a subsolution to \eqref{EL equ} in $\Omega_{\mu, R}$.
\end{lemma}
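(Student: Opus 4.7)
My plan is to plug each 1D barrier, extended trivially to $\Omega_{\mu,R}$ independently of $x$, directly into the elliptic operator of \eqref{EL equ}. Writing $q(y):=\psi'(y)/y$ for such a $\psi(y)$, the operator reduces to
\[
\mathcal L\psi \;:=\; \partial_y\bigl(g_\epsilon(q^2,\psi)\,q\bigr)-y\,\partial_z G_\epsilon(q^2,\psi)
\;=\;(g_\epsilon+2q^2\partial_t g_\epsilon)\,q'(y)+q\psi'\,\partial_z g_\epsilon -y\,\partial_z G_\epsilon,
\]
so the whole question becomes a sign analysis of $\mathcal L\psi$. Three monotonicity facts on the relevant range $z\in[0,Q]$ drive the proof: (a) $g_\epsilon+2q^2\partial_t g_\epsilon\geq c_\ast B_\ast^{-1/(\gamma-1)}>0$ from \eqref{beta1eps}; (b) $\partial_z g_\epsilon\leq 0$, read off the explicit formula \eqref{gm_dz} together with $\mathcal B'\geq 0$ (from \eqref{eq:u0_eps0_B}), $\partial_t g\geq 0$, $\varpi_\epsilon\geq 0$, $\varpi_\epsilon'\leq 0$ and $g^\ast\geq g$; and (c) $\partial_z G_\epsilon\geq 0$ from \eqref{supportG}. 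Since $q\psi'=(\psi')^2/y\geq 0$, (b) gives $q\psi'\partial_z g_\epsilon\leq 0$, so the sign of $\mathcal L\psi$ is controlled by the sign of $q'(y)$ up to a nonpositive correction.

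For the supersolution piece ($\psi=\psi^\dag$): on the smooth region where $\psi^\dag(y)=Ay^2 e^{1-y}$ with $A\in\{Q,\Ld\}$ and $y\in(0,H_\ast]$, direct computation gives $q(y)=A(2-y)e^{1-y}$ and $q'(y)=-A(3-y)e^{1-y}<0$, since $H_\ast\leq 1<3$. Combined with (a)--(c), all three terms of $\mathcal L\psi^\dag$ are nonpositive. On the constant-$Q$ extension the first two terms vanish and only $-y\partial_z G_\epsilon\leq 0$ remains. At the kink $y=H_\ast$ the derivative of $\psi^\dag$ drops to zero, so $g_\epsilon q$ jumps downward and contributes a negative Dirac to $\partial_y(g_\epsilon q)$, consistent with the weak supersolution inequality.

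For the subsolution piece ($\psi=\psi^\sharp_{\mu,R}(-\mu,\cdot)$): on $(0,b_\mu')$ where $\psi\equiv 0$, every term of $\mathcal L\psi$ vanishes (using $\mathcal B'(0)=0$ from \eqref{eq:sign_B}). On $(b_\mu',b_\mu)$, setting $\xi:=(y-b_\mu')/k_\mu\in(0,1)$ and $\psi=Q\xi^s$ gives $\psi'=Qs\xi^{s-1}/k_\mu$ and $\psi''=Qs(s-1)\xi^{s-2}/k_\mu^2$. The elementary inequality $y\psi''/2\geq\psi'$ reduces to $2k_\mu\xi\leq(s-1)y$, which holds as soon as $k_\mu\leq(s-1)b_\mu'/2$, and yields $q'\geq\psi''/(2y)\geq Ck_\mu^{-2}$ for a constant $C>0$ depending only on $\gamma,\epsilon,\bar u,\bar H,s$ (using $\xi^{s-2}\geq 1$ since $s<2$, and $B_\ast^{-1/(\gamma-1)}Q\gtrsim\|y\bar u\|_{L^1}$ from \eqref{eq:rhobar_B_*}). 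The remaining terms are uniformly bounded: $|q\psi'\partial_z g_\epsilon|=y|q^2\partial_z g_\epsilon|\leq\bar H c^\ast\kappa_0/\epsilon$ by Lemma \ref{lem:truncation_g}, and $|y\partial_z G_\epsilon|$ is similarly bounded. Hence for $k_\mu$ small the $Ck_\mu^{-2}$ term dominates and $\mathcal L\psi\geq 0$. The matching at $y=b_\mu'$ is $C^1$, and the corner at $y=b_\mu$ lies on the nozzle $\N\subset\partial\Omega_{\mu,R}$ rather than in the interior, so no distributional complication arises.

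The step I expect to be the main obstacle is fact (b): establishing $\partial_z g_\epsilon\leq 0$ uniformly through the truncation annulus requires sign-tracking every term of the explicit formula \eqref{gm_dz}, and it crucially relies on the structural monotonicity $\mathcal B'\geq 0$ provided by the hypothesis \eqref{ubar_condition}---exactly the large-vorticity ingredient highlighted in the introduction. Once (b) is secured the rest is two short 1D ODE computations.
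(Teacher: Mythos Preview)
Your proposal is correct and follows essentially the same route as the paper: extend each one-dimensional profile trivially in $x$, expand the operator as $(g_\epsilon+2q^2\partial_t g_\epsilon)q'+q\psi'\partial_z g_\epsilon-y\partial_z G_\epsilon$, and determine the sign using exactly the three structural facts you list. The supersolution computation, the treatment of the kink at $H_\ast$ (your Dirac argument is equivalent to the paper's ``minimum of two supersolutions''), and the handling of the $C^1$ junction at $b_\mu'$ all match.

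There is one quantitative discrepancy in the subsolution step. You bound the middle term by $|q\psi'\partial_z g_\epsilon|=y|q^2\partial_z g_\epsilon|\leq \bar H\,c^\ast\kappa_0/\epsilon$ via \eqref{eq:dzg}, which forces the smallness threshold on $k_\mu$ to depend on $\epsilon$, whereas the lemma asserts $k_\mu=k_\mu(\gamma,\bar u,\bar H)$ with no $\epsilon$. The paper avoids this by using the finer pointwise relation $|\partial_z g_\epsilon|\leq C_\gamma\kappa_0 B_\ast^{1/(\gamma-1)}\partial_t g_\epsilon$ from \eqref{eq:dzgm_dtgm}; then the $\partial_z g_\epsilon$ term is absorbed directly into the $2q^2\partial_t g_\epsilon$ part of the ellipticity coefficient rather than estimated separately, and after using $Q\gtrsim B_\ast^{1/(\gamma-1)}$ the $\epsilon$ disappears. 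Since $\epsilon$ is a fixed truncation parameter throughout the paper this is cosmetic for the overall argument, but if you want to match the stated dependence you should swap in \eqref{eq:dzgm_dtgm} at that one point.
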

\begin{proof}
	We first assume that $\Ld>Q$. Denote $\phi(y):=\Ld y^2e^{1-y}$. Straightforward computations give
		\begin{align*}
		\left(g_\v\left(\left|\frac{\phi'}{y}\right|^2,\phi\right)\frac{\phi'}{y}\right)'
		=&\left(g_\v+2\pt_tg_\v\left(\frac{\phi'}{y}\right)^2\right)\left(\frac{\phi''}{y}-\frac{\phi'}{y^2}\right)+\pt_zg_\v\frac{(\phi')^2}{y}\\
		=&\left(g_\v+2\pt_tg_\v\left(\frac{\phi'}{y}\right)^2\right)\Ld e^{1-y}(y-3)
		+\pt_zg_\v\frac{(\phi')^2}{y}.
	\end{align*}
	Note that $\pt_zg_\epsilon<0$ by the expression of $\pt_zg_\v$ in \eqref{gm_dz}, $\pt_tg>0$ (cf. \eqref{eq:g_dt_bound}),  and $\B'(z)\geq0$ (cf. \eqref{eq:u0_eps0_B}). 
	Using \eqref{beta1eps} and \eqref{supportG}, one obtains
	\begin{equation*}
		\left(g_\v\left(\left|\frac{\phi'}{y}\right|^2,\phi\right)\frac{\phi'}{y}\right)'-y\pt_zG_\v\left(\left|\frac{\phi'}{y}\right|^2,\phi\right)<0 \q \text{on }(0,H_*).
	\end{equation*}
	Since $Q$ is a supersolution to \eqref{EL equ}, the function $\min\{\Ld y^2e^{1-y},Q\}$ is a supersolution to \eqref{EL equ}. For the case $\Ld\leq Q$, the function $Qy^2e^{1-y}$ is also a supersolution to \eqref{EL equ}. Thus the function $\psi^\sharp_{\mu, R}(R,\cdot)$ is a supersolution to \eqref{EL equ} in $\Omega_{\mu, R}$. 
	
	Denote $\vp(y):=Q\left(\frac{y-b_\mu'}{k_\mu}\right)^{s}$.  Straightforward computations give
	\begin{align*}
		&\left(g_\v\left(\left|\frac{\vp'}{y}\right|^2,\vp\right)\frac{\vp'}{y}\right)'\\
		=&\left(g_\v+2\pt_tg_\v\left(\frac{\vp'}{y}\right)^2\right)\frac{Qs}{yk_\mu^2}\left(\frac{y-b_\mu'}{k_\mu}\right)^{s-2}\left(s-2+\frac{b_\mu'}{y}\right)
		+\pt_zg_\v\frac{(\vp')^2}{y}.
	\end{align*}
		It follows from Lemma \ref{lem:truncation_g}, the estimates  \eqref{eq:dzgm_dtgm}-\eqref{lable_2} and \eqref{g bound}, as well as \eqref{eq:upper_pzzG} that
\begin{align}\label{ineq10}
g_\v+2\pt_tg_\v\left(\frac{\vp'}{y}\right)^2\geq CB_*^{-\frac1{\gamma-1}},\quad |\pt_zg_\epsilon|\leq C\kappa_0 B_*^{\frac1{\gamma-1}}\pt_t g_\epsilon,\quad
	|\pt_z G_\epsilon|\leq C\kappa_0, 
\end{align}
where $C=C(\g)>0$. Recall that $Q\geq C(\g,\bar u)B_*^{\frac1{\g-1}}$ by \eqref{label_7}. Therefore, if $k_\mu=k_\mu(\gamma,\bar u,\bar H)$ is sufficiently small, the function $\vp$ satisfies
	\begin{equation*}
		\left(g_\v\left(\left|\frac{\vp'}{y}\right|^2,\vp\right)\frac{\vp'}{y}\right)'-y\pt_zG_\v\left(\left|\frac{\vp'}{y}\right|^2,\vp\right)>0\q \text{on }(b'_\mu,b_\mu).
	\end{equation*}
Since $0$ is a solution to \eqref{EL equ}, then $\psi^\sharp_{\mu,R}(-\mu,\c)$ is a subsolution to \eqref{EL equ} in $\Omega_{\mu, R}$. 
This finishes the proof of the lemma.
\end{proof}

By virtue of Lemma \ref{psi0 sup-subsol}, we can derive the following proposition (cf. \cite[Lemma 5.2 and Proposition 5.4]{L2023_axi_small}), which demonstrates that problem \eqref{variation problem} has a unique minimizer and the minimizer is monotone increasing in the $x$-direction.  In addition, this proposition gives a uniform (with respect to $\mu$ and $R$) estimate for minimizers  of the truncated problems. 

\begin{proposition}\label{psi monotonic}
Let $\psi_{\mu,R,\Ld}$ be a minimizer of problem \eqref{variation problem} in $\O_{\mu,R}$ with the boundary value $\psi^\sharp_{\mu,R}$ constructed in \eqref{psi0}. If $k_\mu=k_\mu(\gamma, \bar u, \bar H)$ is sufficiently small, then
\begin{equation}\label{psi bound}
\psi^\sharp_{\mu,R}(-\mu,y)<\psi_{\mu,R,\Ld}(x,y)\leq\psi^\sharp_{\mu,R}(R,y) \q\text{for all } (x,y)\in \O_{\mu,R}.
\end{equation}
Moreover, $\psi_{\mu,R,\Ld}$ is the unique minimizer of problem \eqref{variation problem}, and $\pt_{x}\psi_{\mu,R,\Ld}\geq0$ in $\O_{\mu,R}$.
\end{proposition}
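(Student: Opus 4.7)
The approach follows the strategy of \cite[Lemma 5.2 and Proposition 5.4]{L2023_axi_small}, decomposing the proof into three parts: a priori barrier bounds, monotonicity in $x$, and uniqueness.

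\emph{Step 1 (trapping $\psi$ between one-dimensional barriers).} Extend $\psi^\sharp_{\mu,R}(-\mu,\cdot)$ and $\psi^\sharp_{\mu,R}(R,\cdot)$ to $y$-only functions $\Psi^\pm(y)$ on $(0,\bar H)$ by setting $\Psi^+(y)=Q$ for $y\geq 1$ and $\Psi^-(y)=Q$ for $y\geq b_\mu$. Lemma \ref{psi0 sup-subsol} identifies $\Psi^+$ as a supersolution and, provided $k_\mu$ is chosen small depending on $\gamma,\bar u,\bar H$, identifies $\Psi^-$ as a subsolution of the Euler--Lagrange equation \eqref{EL equ}. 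Using $b'_\mu>1$ (guaranteed by $k_\mu<(b_\mu-1)/4$), a case-by-case inspection on each piece of $\partial\Omega_{\mu,R}$ shows $\Psi^-\leq\psi^\sharp_{\mu,R}\leq\Psi^+$ there. A free-boundary comparison principle---built from the ellipticity \eqref{eq:convex0} and the sign property \eqref{supportG}---then propagates this ordering into the interior, giving $\Psi^-\leq\psi\leq\Psi^+$ throughout $\Omega_{\mu,R}$. The strict inequality $\psi>\Psi^-$ off the slice $\{x=-\mu\}$ follows from the strong maximum principle / Hopf lemma applied to $\psi-\Psi^-$ in the open set where both sides are strictly less than $Q$.

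\emph{Step 2 (monotonicity by sliding).} For $0<h\ll 1$, set $\psi^h(x,y):=\psi_{\mu,R,\Lambda}(x+h,y)$ on the common domain $\Omega^h:=\Omega_{\mu,R}\cap(\Omega_{\mu,R}-(h,0))$. Since both the functional $J^\epsilon_{\mu,R,\Lambda}$ and \eqref{EL equ} are translation invariant in $x$, $\psi^h$ is a minimizer on its natural domain. On $\partial\Omega^h$ the ordering $\psi\leq\psi^h$ is verified piece by piece, invoking Step 1: on the left slice $\{x=-\mu\}$, $\psi=\Psi^-\leq\psi^h$; on the right slice $\{x=R-h\}$, $\psi^h=\Psi^+\geq\psi$; on the shifted nozzle portion and along the top $y=1$, $\psi\leq Q=\psi^h$; and on the axis both vanish. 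Reapplying the free-boundary comparison principle yields $\psi\leq\psi^h$ throughout $\Omega^h$, which is exactly $\pt_x\psi\geq 0$.

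\emph{Step 3 (uniqueness).} Given two minimizers $\psi_1,\psi_2$ sharing the boundary data $\psi^\sharp_{\mu,R}$, applying the same free-boundary comparison principle to the pair---whose boundary values coincide on $\partial\Omega_{\mu,R}$---yields both $\psi_1\leq\psi_2$ and $\psi_2\leq\psi_1$, hence $\psi_1=\psi_2$.

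\emph{Expected main obstacle.} The technical core is the free-boundary comparison principle invoked repeatedly above, which is strictly stronger than Lemma \ref{comparison principle} because in each application at least one of the two functions being compared is only a minimizer and fails to satisfy \eqref{EL equ} pointwise on its coincidence set $\{\cdot=Q\}$. The argument confines the potential bad set, for instance $\{\psi>\Psi^+\}$, to a region where classical elliptic comparison is available: since $\Psi^+\leq Q$, the putative touching set lies in $\{\Psi^+<Q\}\cap\{\psi<Q\}$, where the ellipticity \eqref{eq:convex0} together with the sign condition $\pt_z\mathcal G\geq 0$ of \eqref{supportG} supplies a linearized maximum principle. The nondegeneracy encoded in \eqref{det_0}, combined with the smallness condition on $r$ in Lemma \ref{comparison principle}, glues the local pieces together. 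A secondary difficulty in Step 2 is that $\Omega$ is not $x$-translation invariant because the nozzle $\N$ is oblique; this is precisely why the barriers $\Psi^\pm$ must be taken $y$-only, so that the boundary verification on $\partial\Omega^h$ remains tractable.
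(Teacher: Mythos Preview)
Your proposal follows the same three-step strategy as the paper, which itself defers entirely to \cite[Lemma~5.2 and Proposition~5.4]{L2023_axi_small}; the overall plan (one-dimensional barriers, sliding, then uniqueness) is correct.

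One point in your ``expected main obstacle'' paragraph needs correction. The assertion that $\{\psi>\Psi^+\}\subset\{\Psi^+<Q\}\cap\{\psi<Q\}$ is not a priori true: a minimizer can in principle satisfy $\psi=Q$ at points where $\Psi^+<Q$, and on $\{\psi=Q\}$ the Euler--Lagrange equation \eqref{EL equ} is unavailable, so you cannot reduce to the classical elliptic comparison you describe. The lower bound $\Psi^-\le\psi$ \emph{does} work as you wrote, because $\psi$ is a global supersolution (Lemma~\ref{lemmas}(i)) and $\Psi^-$ a subsolution. For the upper bound and for Steps~2--3, however, you are comparing either a minimizer against a free-boundary supersolution, or two minimizers against each other, and neither function is a global subsolution. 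The right device is the energy identity
\[
J(\max(\psi_1,\psi_2))+J(\min(\psi_1,\psi_2))=J(\psi_1)+J(\psi_2),
\]
valid because both the $G_\epsilon$-integrand and $\chi_{\{\,\cdot\,<Q\}}$ split pointwise under $\max/\min$. Once the boundary ordering is checked, $\min$ and $\max$ are admissible competitors, whence both are minimizers; strict convexity \eqref{eq:convex0} and interior regularity then force the ordering inside. For $\psi\le\Psi^+$ one instead tests minimality against $\min(\psi,\Psi^+)$ and uses that $\Psi^+$ is a supersolution together with $\partial_z G_\epsilon\ge 0$ to show the energy cannot drop; the characteristic-function contribution $\lambda_\epsilon^2\,|\{\psi>\Psi^+\}\cap\{\psi=Q\}|$ is what replaces the set-containment you asserted.
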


\subsection{Fine properties of the free boundary}\label{subsec continuousfit}
Denote the set of the free boundary points which lie strictly below $\{y=1\}$ as 
\begin{equation}\label{Gamma truncated}
	\G_{\mu,R,\Ld}:=\pt\{\psi<Q\}\cap\{(x,y): (x,y)\in (-\mu,R]\times (0,1)\}.
\end{equation}
The following graph property of the free boundary $\G_{\mu,R,\Lambda}$ is obtained from Proposition \ref{psi monotonic} and the uniqueness of the Cauchy problem for the elliptic equation (cf. (\cite[Lemma 5.6]{L2023_axi_small})). 

\begin{proposition}\label{y graph}(\cite[Proposition 5.7]{L2023_axi_small})
Let $\G_{\mu,R,\Ld}$ be defined in \eqref{Gamma truncated}.
If $\G_{\mu,R,\Ld}\neq\emptyset$, then it can be represented as a graph of a function of $y$, i.e., there exists a function $\ur_{\mu,R,\Ld}$ and $\ul H_{\mu,R,\Ld}\in[H_*,1)$ such that
$$\G_{\mu,R,\Lambda}=\{(x,y): x=\ur_{\mu,R,\Ld}(y),~ y\in(\ul H_{\mu,R,\Ld},1)\}.$$
Furthermore, $\ur_{\mu,R,\Ld}$ is continuous, $-\mu<\ur_{\mu,R,\Ld}\leq R$ and $\lim_{y\to1-}\ur_{\mu,R,\Ld}(y)$ exists.
\end{proposition}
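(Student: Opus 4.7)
The plan is to extract $\ur_{\mu,R,\Ld}$ directly from the $x$-monotonicity $\pt_x\psi_{\mu,R,\Ld}\geq 0$ provided by Proposition \ref{psi monotonic}, pin down the admissible range of $y$ using the explicit boundary data \eqref{psi0}, and then upgrade to a continuous function by excluding horizontal ``shelves'' of $\{\psi_{\mu,R,\Ld}=Q\}$ via the Cauchy-uniqueness lemma \cite[Lemma 5.6]{L2023_axi_small} for the Euler--Lagrange equation \eqref{EL equ}.

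For the graph structure, fix $y\in(0,1)$ and set $I(y):=\{x\in(-\mu,R]:\psi_{\mu,R,\Ld}(x,y)<Q\}$. The bound $\pt_x\psi_{\mu,R,\Ld}\geq 0$ together with $\psi_{\mu,R,\Ld}\leq Q$ ensures that once $\psi_{\mu,R,\Ld}(x_1,y)=Q$ we have $\psi_{\mu,R,\Ld}(\cdot,y)\equiv Q$ on $[x_1,R]$; hence $I(y)$ is either empty or an interval $(-\mu,\ur(y))$ with $\ur(y):=\sup I(y)$, and $(\ur(y),y)$ is the unique free-boundary point at height $y$ whenever it lies in $(0,1)$. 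The strict inequality $\psi_{\mu,R,\Ld}>\psi^\sharp_{\mu,R}(-\mu,\cdot)$ in $\Omega_{\mu,R}$ from Proposition \ref{psi monotonic}, combined with $\psi^\sharp_{\mu,R}(-\mu,y)<Q$ for $y<b_\mu$, yields $\ur(y)>-\mu$, while $\ur(y)\leq R$ is immediate.

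To identify the admissible range, define $\ul H_{\mu,R,\Ld}:=\inf\{y\in(0,1):I(y)\neq(-\mu,R]\}$. For $y<H_*$ the right-hand boundary value satisfies $\psi^\sharp_{\mu,R}(R,y)=\psi^\dag(y)<Q$ by construction of $\psi^\dag$, so $x$-monotonicity forces $\psi_{\mu,R,\Ld}(\cdot,y)\leq\psi^\sharp_{\mu,R}(R,y)<Q$ on the entire slice, leaving no free-boundary point at such heights; hence $\ul H_{\mu,R,\Ld}\geq H_*$. The nonemptiness hypothesis on $\G_{\mu,R,\Ld}$ then gives $\ul H_{\mu,R,\Ld}<1$.

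The main obstacle is the continuity of $\ur$ together with the existence of $\lim_{y\to 1-}\ur(y)$, and I would attack both by a shelf-elimination argument. Suppose $\ur$ were discontinuous at some $y_0\in(\ul H_{\mu,R,\Ld},1)$; by $x$-monotonicity we may extract $y_n\to y_0$ with, say, $\ur(y_n)\geq\ur(y_0)+\eta$ for some $\eta>0$. Passing to the limit in $\psi_{\mu,R,\Ld}(\ur(y_0)+\eta/2,y_n)<Q$ and combining with $\psi_{\mu,R,\Ld}(\ur(y_0),y_0)=Q$ forces $\psi_{\mu,R,\Ld}\equiv Q$ on the horizontal segment $\sigma:=[\ur(y_0),\ur(y_0)+\eta/2]\times\{y_0\}$, while $\psi_{\mu,R,\Ld}<Q$ on one side of $\sigma$. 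The free-boundary condition of Lemma \ref{free BC} then prescribes $|\n\psi_{\mu,R,\Ld}/y|=\Ld$ on $\sigma$, yielding non-trivial Cauchy data for \eqref{EL equ}, so \cite[Lemma 5.6]{L2023_axi_small} propagates $\psi_{\mu,R,\Ld}\equiv Q$ across $\sigma$, contradicting the flow-side strict inequality. The subtle point is to verify that the a priori Lipschitz regularity of $\psi_{\mu,R,\Ld}$ near $\sigma$ (with $C^{2,\a}$ regularity only in $\{\psi_{\mu,R,\Ld}<Q\}$) is enough to run the Cauchy-uniqueness argument, which is the precise setting handled in the cited lemma. The existence of $\lim_{y\to 1-}\ur(y)$ is then obtained by the same reasoning: any oscillation of $\ur$ as $y\to 1-$ would, by continuity and the intermediate value theorem, produce, in the limit, a horizontal plateau of $\{\psi_{\mu,R,\Ld}=Q\}$ strictly below $y=1$ that is again eliminated by \cite[Lemma 5.6]{L2023_axi_small}; alternatively, a comparison principle applied to the shifts $\psi_{\mu,R,\Ld}(\cdot,\cdot+h)$ gives $\pt_y\psi_{\mu,R,\Ld}\geq 0$ and hence that $\ur$ is non-increasing in $y$, so the limit exists by boundedness.
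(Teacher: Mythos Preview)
Your overall strategy---extracting the $y$-graph from the $x$-monotonicity of Proposition~\ref{psi monotonic} and ruling out horizontal shelves via the Cauchy-uniqueness lemma \cite[Lemma~5.6]{L2023_axi_small}---is exactly the route the paper indicates (it does not give a self-contained proof but attributes the result to these two ingredients). The graph structure, the bound $\ul H_{\mu,R,\Ld}\geq H_*$ via \eqref{psi bound}, and the continuity argument by shelf-elimination are all in line with the cited source.

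Two points in your last paragraph deserve correction. First, your alternative route to $\lim_{y\to 1-}\ur(y)$ via ``$\pt_y\psi_{\mu,R,\Ld}\geq 0$ by comparing $\psi_{\mu,R,\Ld}$ with its $y$-shifts'' does not go through in the axisymmetric setting: the operator in \eqref{EL equ} has explicit $y$-dependence through the weights $\n\psi/y$ and the factor $y$ on the right-hand side, so $\psi_{\mu,R,\Ld}(\cdot,\cdot+h)$ satisfies a \emph{different} equation and the comparison principle of Lemma~\ref{comparison principle} is not available for that pair. Second, your primary argument for the limit---``oscillation produces, in the limit, a horizontal plateau strictly below $y=1$''---is not quite right either: if $\ur$ is already continuous on $(\ul H_{\mu,R,\Ld},1)$ and only fails to converge as $y\to 1-$, the resulting flat piece of $\partial\{\psi<Q\}$ sits \emph{at} $y=1$, on the fixed boundary $[0,R]\times\{1\}$, not strictly below it. The correct way to finish is to observe that any accumulation segment $[a,b]\times\{1\}\subset\overline{\Gamma_{\mu,R,\Ld}}$ with $a<b$ is a $C^1$ piece of free boundary along which both $\psi=Q$ and $|\nabla\psi/y|=\Lambda$ hold up to the boundary (by the interior free-boundary regularity and the Lipschitz estimate of Remark~\ref{Lipschitz boundary}); one then applies the Cauchy-uniqueness lemma in a one-sided neighbourhood in $\{y<1\}$ to force $\psi\equiv Q$ there, contradicting the presence of free boundary points approaching that segment from below. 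Once you make this one-sided application explicit, the argument closes.
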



In order to show that there exists a suitable $\Ld>0$ such that the free boundary $\G_{\mu,R,\Ld}$ joins the outlet of the nozzle $\N$ as a continuous curve, i.e., the free boundary satisfies the continuous fit property, we give the next lemma. The proof of the lemma is based on the nondegeneracy of the minimizer and the comparison principle (cf. the proof for \cite[Lemma 6.2]{L2023_axi_small}). 

\begin{lemma}\label{continuous fit}
There exists a constant $C=C(\gamma,\epsilon,\bar u,\bar H)>0$ such that the following statements hold.

\rm{(i)} If $\Lambda>{CQ}$, then the free boundary $\G_{\mu,R,\Ld}$ is nonempty and $\Upsilon_{\mu,R,\Ld}(1)<0$.

\rm{(ii)} If $\Lambda<{C^{-1}Q}$, then $\Upsilon_{\mu,R,\Ld}(1)>0$.
\end{lemma}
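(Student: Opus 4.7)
The proof strategy uses comparison with explicit barriers on subregions of $\Omega_{\mu,R}$, combined with the comparison principle of Lemma \ref{comparison principle}, the monotonicity $\partial_x\psi_{\mu,R,\Lambda}\geq 0$, and the graph property of the free boundary (Propositions \ref{psi monotonic} and \ref{y graph}).

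For Part (i), the nonemptiness of $\Gamma_{\mu,R,\Lambda}$ follows immediately from Proposition \ref{psi monotonic}, which gives $\psi_{\mu,R,\Lambda}\leq \psi^\sharp_{\mu,R}(R,\cdot)=\min(\Lambda y^2 e^{1-y},Q)=:\tilde\phi(y)$ in $\Omega_{\mu,R}$. For $\Lambda>CQ$ with $C$ large, the first root $H_\ast$ of $\Lambda y^2 e^{1-y}=Q$ satisfies $H_\ast<1$, so $\tilde\phi<Q$ on $\{y<H_\ast\}$, forcing $\{\psi_{\mu,R,\Lambda}<Q\}$ to be nonempty. To establish $\Upsilon_{\mu,R,\Lambda}(1)<0$, I would construct a two-dimensional supersolution $\Phi$ of \eqref{EL equ} on a suitable subregion $\mathcal{V}\subset\Omega_{\mu,R}$ anchored at the nozzle outlet corner $(0,1)$, such that $\Phi\geq\psi_{\mu,R,\Lambda}$ on $\partial\mathcal{V}$ and $\Phi<Q$ at some interior point $(x_\ast,1)\in\mathcal{V}$ with $x_\ast<0$. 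A natural class of candidates arises by combining $\tilde\phi$ with a suitable linear or radially-decaying correction in $x$ that exploits the monotonicity $\partial_x\psi_{\mu,R,\Lambda}\geq 0$ while preserving the supersolution property. The supersolution property is verified via the bounds of Lemma \ref{lem:truncation_g} and the non-negativity of $\partial_z\MG$ from Proposition \ref{Gproperties pro}(ii), and the boundary inequality uses the first-step bound $\psi_{\mu,R,\Lambda}\leq \tilde\phi$. Comparison then forces $\psi_{\mu,R,\Lambda}(x_\ast,1)<Q$, and the graph property yields $\Upsilon_{\mu,R,\Lambda}(1)\leq x_\ast<0$.

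For Part (ii), the argument is dual. For $\Lambda<C^{-1}Q$ with $C$ large, I would construct a supersolution $\Psi$ of \eqref{EL equ} on a small rectangle $\mathcal{R}:=(0,\delta_0)\times(1-\delta_0,1)$ just to the right of the nozzle outlet, such that $\Psi<Q$ throughout the interior of $\mathcal{R}$ and $\Psi\geq\psi_{\mu,R,\Lambda}$ on the bottom, left, and right edges of $\mathcal{R}$ (the right edge uses the first-step bound of Part (i) applied with small $\Lambda$, the left edge uses the boundary data and the Lipschitz estimate of Proposition \ref{Lipschitz}). Since the free boundary condition imposes $|\nabla\psi_{\mu,R,\Lambda}/y|=\Lambda$, the drop of $\psi_{\mu,R,\Lambda}$ across any free boundary is proportional to $\Lambda$, so for $\Lambda$ small, $\delta_0>0$ can be chosen independently of $\mu,R$. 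Comparison yields $\psi_{\mu,R,\Lambda}\leq \Psi<Q$ inside $\mathcal{R}$, precluding interior free boundary points in $\mathcal{R}$. Hence the free boundary graph (if nonempty) satisfies $\Upsilon_{\mu,R,\Lambda}(y)\geq \delta_0$ for $y$ sufficiently close to $1$, giving $\Upsilon_{\mu,R,\Lambda}(1)\geq \delta_0>0$.

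The main obstacle is the explicit construction of these two-dimensional barriers. Unlike the small-vorticity setting of \cite{L2023_axi_small}, the inhomogeneous term $y\partial_z G_\epsilon$ in \eqref{EL equ} is not small, so the super/subsolution properties must be verified directly, relying essentially on the non-negativity $\partial_z\MG\geq 0$ from Proposition \ref{Gproperties pro}(ii) and the structural bounds of Lemma \ref{lem:truncation_g}. Moreover, the sharp scaling $\Lambda\sim Q$ in the threshold must be extracted via the equivalence $B_\ast\sim(Q/\|y\bar u\|_{L^1([0,\bar H])})^{\gamma-1}$ from Remark \ref{rmk:Q} together with the bound $\lambda_\epsilon\sim Q^{-1/2}\Lambda$ from Remark \ref{rmk:ld Ld}, so that the choices of the neighborhoods $\mathcal V,\mathcal R$ and the exponents in the tilt/shift scale correctly with $\Lambda/Q$.
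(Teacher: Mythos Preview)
Your Part (i) argument is oriented in the wrong direction. The bound $\psi\leq\tilde\phi$ only shows that $\{\psi<Q\}$ is nonempty as a set, which is trivially true (already $\psi=0$ on $\N_0$) and says nothing about the free boundary $\Gamma_{\mu,R,\Lambda}=\partial\{\psi<Q\}\cap\Omega_{\mu,R}$. More seriously, your barrier would at best yield $\psi(x_\ast,1)<Q$ for some $x_\ast<0$; by monotonicity and continuity this means $(x_\ast,y)\in\{\psi<Q\}$ for $y$ near $1$, hence $x_\ast<\Upsilon_{\mu,R,\Lambda}(y)$ (recall $\psi<Q$ exactly on $\{x<\Upsilon(y)\}$), so $\Upsilon_{\mu,R,\Lambda}(1)\geq x_\ast$ --- the \emph{opposite} of what you want. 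To obtain $\Upsilon_{\mu,R,\Lambda}(1)<0$ you must produce an \emph{interior} point $(x_0,y_0)$ with $x_0<0$, $y_0<1$ at which $\psi=Q$; this simultaneously gives nonemptiness of $\Gamma_{\mu,R,\Lambda}$ and $\Upsilon_{\mu,R,\Lambda}(y_0)\leq x_0<0$.

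The tool the paper uses for this, and which you never invoke, is the nondegeneracy Lemma~\ref{nondegeneracy}. For any ball $B_r(\bar X)\subset\Omega_{\mu,R}$ one trivially has $r^{-1}\big(\dashint_{B_r(\bar X)}|Q-\psi|^\vartheta\big)^{1/\vartheta}\leq Q/r$; when $\Lambda>CQ$ this falls below $c_a\Lambda\bar y$ for a ball of fixed radius placed just inside the nozzle mouth, and nondegeneracy forces $\psi\equiv Q$ on $B_{ar}(\bar X)$. Your Part (ii) barrier also has a gap: the comparison principle applies only inside $\{\psi<Q\}$ (globally $\psi$ is merely a supersolution, cf.\ Lemma~\ref{lemmas}(i)), and on any portion of $\Gamma_\psi\cap\mathcal{R}$ one has $\psi=Q>\Psi$, so the boundary inequality needed for comparison fails precisely when the conclusion is in doubt. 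The paper's route again couples the comparison principle with the nondegeneracy/linear-growth estimates rather than with an ad hoc supersolution on $\mathcal{R}$.
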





Since $\psi_{\mu,R,\Ld}$ and $\ur_{\mu,R,\Ld}$ depend on $\Ld$ continuously (cf. \cite[Lemma 6.1]{LSTX2023} and \cite[Lemma 6.1]{L2023_axi_small}), the continuous fit of the free boundary follows from Lemma \ref{continuous fit}.

\begin{corollary}\label{Ld determine}
There exists $\Ld=\Ld(\mu,R)>0$ such that $\ur_{\mu,R,\Ld}(1)=0$. Furthermore,
$$C^{-1}Q\leq \Ld(\mu,R)\leq CQ$$
for positive constant $C$ depending on $\g$, $\v$, $\bar u$, and $\bar H$,  but independent of $\mu$ and $R$.
\end{corollary}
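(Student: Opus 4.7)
The plan is to apply an intermediate-value argument to the map $\Lambda \mapsto \Upsilon_{\mu,R,\Lambda}(1)$, using Lemma \ref{continuous fit} to bracket the relevant values of $\Lambda$ and the cited continuous dependence results to extract the crossing. First I would set
\[
\Lambda(\mu,R) := \sup\bigl\{\Lambda > 0 : \Upsilon_{\mu,R,\Lambda}(1) > 0\bigr\}.
\]
By Lemma \ref{continuous fit}(ii), this set contains every $\Lambda < C^{-1}Q$, so it is nonempty and the supremum is at least $C^{-1}Q$. By Lemma \ref{continuous fit}(i), every $\Lambda > CQ$ produces $\Upsilon_{\mu,R,\Lambda}(1)<0$, so the supremum is at most $CQ$. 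This already gives the two-sided bound $C^{-1}Q \leq \Lambda(\mu,R) \leq CQ$ with $C$ independent of $\mu,R$.

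Next I would prove $\Upsilon_{\mu,R,\Lambda(\mu,R)}(1)=0$ by excluding the two strict alternatives. Write $\Lambda^\ast := \Lambda(\mu,R)$. Invoking the continuous dependence of the unique minimizer $\psi_{\mu,R,\Lambda}$ on $\Lambda$ (cf.\ \cite[Lemma 6.1]{LSTX2023} and \cite[Lemma 6.1]{L2023_axi_small}), together with the graph representation of the free boundary from Proposition \ref{y graph}, yields continuity of $\Lambda \mapsto \Upsilon_{\mu,R,\Lambda}(1)$. If $\Upsilon_{\mu,R,\Lambda^\ast}(1) > 0$, then for some small $\eta>0$ and all $\Lambda \in (\Lambda^\ast,\Lambda^\ast+\eta)$ one still has $\Upsilon_{\mu,R,\Lambda}(1) > 0$, contradicting the definition of $\Lambda^\ast$ as a supremum. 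If instead $\Upsilon_{\mu,R,\Lambda^\ast}(1) < 0$, then by continuity there is $\eta>0$ with $\Upsilon_{\mu,R,\Lambda}(1)<0$ for all $\Lambda \in (\Lambda^\ast-\eta,\Lambda^\ast]$; but the supremum definition requires values of $\Lambda$ arbitrarily close to $\Lambda^\ast$ from below with $\Upsilon_{\mu,R,\Lambda}(1)>0$, again a contradiction. Hence $\Upsilon_{\mu,R,\Lambda^\ast}(1)=0$.

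The main obstacle is really wrapped up in the cited continuity statement: one needs that as $\Lambda_n \to \Lambda^\ast$, the minimizers $\psi_{\mu,R,\Lambda_n}$ converge (uniformly on compacts, say) to $\psi_{\mu,R,\Lambda^\ast}$, and that this convergence is strong enough to pass to the limit in the graph $x = \Upsilon_{\mu,R,\Lambda_n}(y)$ at $y=1$. The uniqueness of the minimizer (Proposition \ref{psi monotonic}), the uniform Lipschitz estimate (Proposition \ref{Lipschitz}) along with the uniform bounds on $\Lambda$ from the first paragraph, and the nondegeneracy (Lemma \ref{nondegeneracy}) together ensure that the free boundary cannot disappear or jump in the limit, which is what makes the cited continuity work. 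Once that ingredient is accepted, the intermediate-value step sketched above closes the proof in a few lines.
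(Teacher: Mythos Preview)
Your proposal is correct and follows essentially the same route as the paper: the paper's argument is the one-sentence remark preceding the corollary, namely that continuous dependence of $\psi_{\mu,R,\Lambda}$ and $\Upsilon_{\mu,R,\Lambda}$ on $\Lambda$ (cited from \cite[Lemma 6.1]{LSTX2023} and \cite[Lemma 6.1]{L2023_axi_small}) combined with the bracketing of Lemma \ref{continuous fit} yields the continuous fit by an intermediate-value argument. You have simply unpacked this into the standard supremum construction and the two-sided exclusion, which is exactly the intended mechanism.
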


The continuous fit implies the following smooth fit of the free boundary. 

\begin{proposition}\label{smooth fit}(\cite[Proposition 6.4]{L2023_axi_small})
Let $\psi$ be a solution obtained in Corollary \ref{Ld determine}. Then $\N\cup\G_{\mu,R,\Ld}$ is $C^1$ in a neighborhood of the point $A=(0,1)$, and $\n\psi$ is continuous in a $\{\psi<Q\}$-neighborhood of $A$.
\end{proposition}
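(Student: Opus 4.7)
My approach is to reduce the smooth fit at $A=(0,1)$ to the interior free boundary regularity already established in Section \ref{sec existence and regularity}, following the strategy of the analogous result \cite[Proposition 6.4]{L2023_axi_small}. Since $y(A)=1$, we are well away from the symmetry axis, so the equation \eqref{EL equ} is uniformly elliptic near $A$ by \eqref{beta1eps} and there is no weighted degeneracy to contend with.

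The plan is as follows. Because $\mathcal N\in C^{1,\bar\alpha}$ near $A$, first introduce a $C^{1,\bar\alpha}$ change of variables on a ball $B_\delta(A)$ that maps $\mathcal N\cap B_\delta(A)$ onto a straight segment, chosen so that the transformed PDE remains of the same quasilinear type with $C^{\bar\alpha}$ coefficients. Under this straightening, the Dirichlet condition $\psi=Q$ on the straightened $\mathcal N$ is preserved, and the free boundary condition from Lemma \ref{free BC} is preserved up to a nonvanishing smooth factor coming from the Jacobian. Next, exploit the fact that $\psi\equiv Q$ on the straightened $\mathcal N$ to perform an even reflection of $\psi$ across the segment, producing an extension $\tilde\psi$ defined on a full neighborhood of $A$ with $\tilde\psi=Q$ on the reflection line. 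By the continuous fit $\Upsilon_{\mu,R,\Lambda}(1)=0$ from Corollary \ref{Ld determine}, the set $\partial\{\tilde\psi<Q\}$ consists of $\Gamma_{\mu,R,\Lambda}$ together with its mirror image, and both arcs terminate at $A$, so that $A$ becomes an interior free boundary point of the extended problem.

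One then verifies that $\tilde\psi$ is a minimizer of the reflected variational problem (obtained by symmetrizing the integrand in \eqref{Jm def} under the straightening map), and hence falls within the framework of Section \ref{sec existence and regularity}. The interior $C^{2,\alpha}$ regularity of the free boundary proved at the end of Section \ref{sec existence and regularity} then applies to the combined free boundary at $A$. The $C^{2,\alpha}$ regularity forces the tangent to $\Gamma_{\mu,R,\Lambda}$ at $A$ to be parallel to the reflection line, since any other tangent direction would create a reflected arc meeting the original arc at a nontrivial angle (a cusp), contradicting $C^1$ regularity. Inverting the straightening map gives the $C^1$ fit of $\mathcal N\cup\Gamma_{\mu,R,\Lambda}$ at $A$. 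Finally, with the $C^1$ geometry in hand, the continuity of $\nabla\psi$ in a $\{\psi<Q\}$-neighborhood of $A$ follows by Schauder estimates for the mixed boundary value problem \eqref{EL equ} on a $C^{1,\alpha}$ domain (for some $\alpha>0$), using the uniform ellipticity furnished by \eqref{eq:convex0} near $A$.

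The main technical obstacle is Step 3: verifying that the reflected $\tilde\psi$ really minimizes a reflected functional that satisfies the structural conditions of Proposition \ref{Gproperties pro}, since the weight $y$ and the $z$-dependence in the integrand $\mathcal G$ from \eqref{notation JDG} do not respect the reflection in a perfectly symmetric way. One either (i) replaces $\mathcal G$ by a symmetrized version on the reflected side and checks that the convexity, sign, and cross-derivative bounds in Proposition \ref{Gproperties pro} (in particular \eqref{eq:convex0}, \eqref{supportG}, and \eqref{det_0}) still hold, so that the regularity machinery of Section \ref{sec existence and regularity} still applies; or (ii) circumvents the reflection entirely by blowing up at $A$, showing that any blow-up limit solves a constant-coefficient uniformly elliptic equation in a wedge with Dirichlet data on both sides and a Bernoulli condition on the free side, and then invoking a Liouville-type classification to force the wedge opening to be $\pi$. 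Either route is standard but needs care because the straightening diffeomorphism only has $C^{\bar\alpha}$ derivatives and $\mathcal G$ is only $C^{1,1}$ in $z$.
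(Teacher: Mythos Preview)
The paper does not supply its own proof of this proposition; it simply cites \cite[Proposition 6.4]{L2023_axi_small}. So there is no argument in the present paper to compare against directly, and the relevant question is whether your sketch reproduces the standard argument behind that citation.

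Your main route (option (i), reflection) has a gap that is more basic than the asymmetry of the weight $y$ and the $z$-dependence you flag. On $\N$ the stream function satisfies $\psi=Q$ with a \emph{nonvanishing} normal derivative: by \eqref{psi gradient} one has $\pt_\nu\psi=y\rho|\u|>0$ along the nozzle wall, since the tangential flow speed there is positive. Hence an even reflection of $\psi$ across the straightened $\N$ produces an extension $\tilde\psi$ whose gradient jumps across the reflection line. Such a $\tilde\psi$ cannot be a local minimizer of any functional of the form in \eqref{Jm def} near a point of that line away from $A$: replacing $\tilde\psi$ near the ridge by a smooth competitor strictly lowers the convex gradient term (by \eqref{eq:convex0}) without changing the characteristic-function term, since $\{\tilde\psi<Q\}$ already has full measure there. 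Consequently the interior free-boundary regularity of Section \ref{sec existence and regularity} does not apply to $\tilde\psi$, and Step 3 fails irrespective of how one symmetrizes the integrand. Your option (ii) is the correct route and is what underlies the cited result (and the original argument in \cite{ACF85}): blow up $(Q-\psi)/r$ at $A$; the Lipschitz bound (Proposition \ref{Lipschitz}, Remark \ref{Lipschitz boundary}) and nondegeneracy (Lemma \ref{nondegeneracy}) give nontrivial global one-phase limits in the region bounded by the tangent half-line to $\N$ and the limiting free boundary. If the opening angle of the flow region at $A$ were $\theta<\pi$, the wedge asymptotics force $|\n\psi|\to 0$ along $\G_{\mu,R,\Ld}$ near $A$, contradicting $|\n\psi/y|=\Ld$; if $\theta>\pi$, the gradient blows up, contradicting the Lipschitz bound. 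Hence $\theta=\pi$, which together with the $y$-graph property (Proposition \ref{y graph}) gives $\ur'_{\mu,R,\Ld}(1-)=N'(1+)$, and the continuity of $\n\psi$ up to $A$ then follows from boundary Schauder estimates on the resulting $C^1$ domain.
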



\section{Existence of solutions to the jet problem}\label{sec remove truncation}

In this section, we first remove the domain and subsonic truncations, and then study the far fields behavior of the jet flows. Consequently, we obtain the existence of subsonic jet flows which satisfy all properties in Problem \ref{pb2}.

\subsection{Remove the domain truncations}
In this subsection, we remove the truncations of the domain in Section \ref{sec variational formulation} by letting $\mu, R\to\infty$ to get a limiting solution in $\Omega$, which is bounded by $\N_0$ and $\N\cup([0,\infty)\times\{1\})$. The limiting solution inherits the properties of solutions in the truncated domain due to the uniform estimates (with respect to $\mu$ and $R$). The proof is similar to \cite[Proposition 7.1]{LSTX2023}, so we shall not repeat it here. 

\begin{proposition}\label{psi determine}
Let the nozzle boundary $\N$ defined in \eqref{nozzle} satisfy \eqref{nozzle condition}. Given an incoming axial velocity  $\bar u\in C^{1,1}([0,\bar H])$ satisfying \eqref{ubar_condition} and a mass flux $Q>\tilde Q$ where $\tilde Q$ is defined in \eqref{def:Q_*}.
Let $\psi_{\mu,R,\Ld}$ be a minimizer of the problem \eqref{variation problem}
with $\psi^\sharp_{\mu,R}$ defined in \eqref{psi0}. Then for any $\mu_j, R_j\to\infty$, there is a subsequence (still labeled by $\mu_j$ and $R_j$) such that $\Ld_j:=\Ld(\mu_j,R_j)\to \Ld_\infty$ for some $\Ld_\infty\in (0,\infty)$ and $\psi_{\mu_j,R_j,\Ld_j}\to \psi_\infty$ in $C^{0,\a}_{\rm loc}(\O)$ for any $\a\in (0,1)$. Furthermore, the following properties hold. 
\begin{itemize}
	\item [(i)] The function $\psi:=\psi_\infty$ is a local minimizer for the energy functional, i.e., for any $\D\Subset \Omega$,  one has  $J^\epsilon(\psi)\leq J^\epsilon(\varphi)$ for all  $\varphi=\psi$ on {$\partial D$}, where
	\begin{align*}
		J^\epsilon(\varphi):=\int_{\D}y\bigg[G_\v\bigg(\left|\frac{\n\phi}{y}\right|^2,\phi\bigg)+\ld^2_{\v,\infty}\chi_{\{\phi<Q\}}\bigg]dX,
		\quad \lambda_{\epsilon,\infty}:=\sqrt{\Phi_\epsilon(\Lambda_\infty^{{2}},Q)}.
	\end{align*}
    In particular, $\psi$ solves
		\begin{equation}\label{eq_limiting_sol}
			\left\{
			\begin{aligned}
				&\n\c\bigg(g_\v\bigg(\left|\frac{\n\psi}{y}\right|^2,\psi\bigg)\frac{\n\psi}{y}\bigg)-y\pt_zG_\v\bigg(\left|\frac{\n\psi}{y}\right|^2,\psi\bigg)=0 &&\text{ in } \mathcal{O},\\
				&\psi =0 &&\text{ on } \N_0,\\
				&\psi =Q &&\text{ on } \N\cup \Gamma_{\psi},\\
				&\left|\frac{\nabla \psi}y\right| =\Lambda &&\text{ on } \Gamma_{\psi},
			\end{aligned}
			\right.
		\end{equation}
		where $\mathcal{O}:=\Omega\cap \{\psi<Q\}$ is the flow region, $\Gamma_{\psi}:=\pt\{\psi<Q\}\backslash \N$ is the free boundary, and $\pt_zG_\v(|\n\psi/y|^2,\psi)$ satisfies \eqref{Gdz expression} in the subsonic region $|\nabla\psi/y|^2\leq (1-\epsilon)\t_c(\mathcal{B}(\psi))$.
	\item [(ii)] The function $\psi$ is in $C^{2,\alpha}(\mathcal{O})\cap C^1(\overline{\mathcal{O}})$ and it satisfies $\pt_{x}\psi\geq 0$ in $\Omega$.
	\item[(iii)] The free boundary $\Gamma_{\psi}$ is given by the graph $x=\Upsilon(y)$, where $\Upsilon$ is a $C^{2,\alpha}$ function as long as it is finite.
	\item[(iv)] At the orifice $A=(0,1)$ one has $\lim_{y\rightarrow 1-}\Upsilon(y)=0$. Furthermore, $\N\cup\Gamma_{\psi}$ is $C^1$ around $A$, in particular, $N'(1)=\lim_{y\rightarrow 1-}\Upsilon'(y)$.
	\item[(v)] There is a constant $\ubar H\in (0,1)$ such that $\Upsilon(y)$ is finite if and only if $y\in (\ubar H, 1]$, and $\lim_{y\rightarrow \ubar H+} \Upsilon(y)=\infty$. Furthermore, there exists an $\bar R>0$ sufficiently large, such that $\Gamma_{\psi}\cap \{x>\bar R\}= \{(x, f(x)): \bar R<x<\infty\}$ for some $C^{2,\alpha}$ function $f$ and $\lim_{x\rightarrow \infty}f'(x)=0$.
\end{itemize}
\end{proposition}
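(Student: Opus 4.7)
\textbf{Proof plan for Proposition \ref{psi determine}.}

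The plan is to pass to the limit in the truncated problems using the uniform-in-$(\mu,R)$ bounds assembled in Sections \ref{sec variational formulation}--\ref{sec fine properties}, and then to extract the far-field behavior from the monotonicity and the PDE. Corollary \ref{Ld determine} gives $C^{-1}Q\le\Lambda(\mu_j,R_j)\le CQ$, so a subsequence converges to some $\Lambda_\infty\in(0,\infty)$. Proposition \ref{Lipschitz} and Remark \ref{Lipschitz boundary} yield uniform local Lipschitz bounds on $\psi_{\mu_j,R_j,\Lambda_j}$ on any compact subset of $\Omega\cup(\N\setminus\{A\})$ (with a constant depending only on $\gamma,\epsilon,\bar u,\bar H$ through $\Lambda$), so Arzel\`a--Ascoli yields a subsequence converging in $C^{0,\alpha}_{\rm loc}(\Omega)$ to a limit $\psi_\infty$ with $0\le\psi_\infty\le Q$ and $\partial_x\psi_\infty\ge 0$ (by Proposition \ref{psi monotonic}). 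Interior elliptic estimates applied to \eqref{EL equ} on $\{\psi<Q\}$ upgrade this to $C^{1,\alpha}_{\rm loc}$ convergence in the positive phase, so $\psi_\infty$ solves the PDE in \eqref{eq_limiting_sol} classically.

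To prove the local minimization property in (i), fix $D\Subset\Omega$ and a test function $\varphi$ agreeing with $\psi_\infty$ on $\partial D$. For $j$ large, $D\subset\Omega_{\mu_j,R_j}$. Using a cut-off to glue $\varphi$ into $\psi_{\mu_j,R_j,\Lambda_j}$ outside $D$ and exploiting the strict convexity of $G_\epsilon$ in the gradient variable (via \eqref{eq:Gm_dpp}) together with $\lambda_{\epsilon,j}\to\lambda_{\epsilon,\infty}$, one obtains $\int_D y\,G_\epsilon(|\nabla\psi_j/y|^2,\psi_j)\to\int_D y\,G_\epsilon(|\nabla\psi_\infty/y|^2,\psi_\infty)$ and the semicontinuity $\liminf_j|\{\psi_j<Q\}\cap D|\ge|\{\psi_\infty<Q\}\cap D|$ (the reverse inequality can be arranged on boundary-regular subsets using nondegeneracy, Lemma \ref{nondegeneracy}). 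The free boundary condition $|\nabla\psi_\infty/y|=\Lambda_\infty$ on $\Gamma_\psi$ then follows from Lemma \ref{free BC} applied in the limit, together with $C^1$-convergence of the free boundaries obtained from the regularity theory. Items (ii) and (iii) are then immediate: monotonicity $\partial_x\psi_\infty\ge 0$ combined with the maximum principle (as in the proof of Proposition \ref{y graph}) forces $\Gamma_\psi$ to be a graph $x=\Upsilon(y)$, and the $C^{2,\alpha}$ regularity is inherited from the truncated problem.

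For (iv), the continuous fit $\lim_{y\to 1-}\Upsilon(y)=0$ follows from $\Upsilon_{\mu_j,R_j,\Lambda_j}(1)=0$ (Corollary \ref{Ld determine}) and the uniform Lipschitz bound on $\Upsilon_{\mu_j,R_j,\Lambda_j}$ near $y=1$ provided by the nondegeneracy estimate. The smooth fit $N'(1)=\lim_{y\to 1-}\Upsilon'(y)$ comes from Proposition \ref{smooth fit} at the truncated level combined with the $C^1$-convergence of the free boundaries to $\Gamma_\psi$ in a neighborhood of $A$.

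The main obstacle is (v), the downstream description. Monotonicity already gives that $\Upsilon$ is defined on some interval $(\ubar H,1]$ with $\ubar H\in[0,1)$, and the $y$-graph property plus the slip condition $\u\cdot\mn=0$ on $\Gamma_\psi$ imply $\Upsilon$ is strictly decreasing wherever $\partial_y\psi>0$. To show $\ubar H>0$, I would apply the nondegeneracy Lemma \ref{nondegeneracy} in a sequence of balls centered on $\Gamma_\psi$ accumulating near $\{y=0\}$: the ball size $r$ forced by the nondegeneracy threshold $c_a\Lambda\bar y$ shrinks like $\bar y$, while $Q-\psi$ is controlled by the global $L^\infty$ bound, giving a contradiction for $\bar y$ too small. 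To re-express $\Gamma_\psi\cap\{x>\bar R\}$ as a graph $y=f(x)$, I would show $\Upsilon'(y)\to-\infty$ as $y\to\ubar H+$, i.e.\ the tangent becomes horizontal. This follows by a blow-down argument: any sequence $x_k\to\infty$ yields, after normalization, a limiting minimizer in a strip $\{0<y<\ubar H\}$ with flat free boundary, by a Liouville-type classification based on the structural conditions \eqref{det_0} and the energy/comparison machinery from Lemma \ref{comparison principle} (much as \cite[Proposition 7.1]{LSTX2023}). The decay $f'(x)\to 0$ is read off from the uniform $C^{2,\alpha}$ regularity of $\Gamma_\psi$ away from the nozzle, together with the uniqueness of the downstream tangent direction thus obtained.
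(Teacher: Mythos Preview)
Your overall strategy is correct and matches the approach the paper defers to (namely \cite[Proposition 7.1]{LSTX2023}): extract a convergent subsequence of $\Lambda_j$ via Corollary~\ref{Ld determine}, use the uniform local Lipschitz bounds from Proposition~\ref{Lipschitz} to pass to a $C^{0,\alpha}_{\rm loc}$ limit, verify local minimality by gluing test functions and invoking nondegeneracy for the measure convergence of the free phase, and inherit (ii)--(iv) from the truncated results. The blow-down argument you sketch for the flatness of the downstream free boundary is also the standard one.

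There is one step that does not work as you wrote it: your argument for $\ubar H>0$ in (v). Nondegeneracy (Lemma~\ref{nondegeneracy}) gives a \emph{lower} bound on averages of $Q-\psi$ near a free boundary point, of order $c_a\Lambda\bar y r$; combining this with the trivial upper bound $Q-\psi\le Q$ yields no contradiction as $\bar y\to 0$. The clean way to rule out $\ubar H=0$ is to use the uniform upper barrier from Proposition~\ref{psi monotonic}: the inequality $\psi_{\mu,R,\Lambda}\le\psi^\sharp_{\mu,R}(R,y)=\psi^\dag(y)$ passes to the limit and gives $\psi_\infty(x,y)\le\psi^\dag_{\Lambda_\infty}(y)$ in $\Omega$. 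Since the continuous fit at $A$ forces the free boundary to enter $\{y<1\}$, one must have $\psi^\dag_{\Lambda_\infty}(y)=Q$ for some $y<1$, which forces $\Lambda_\infty>Q$ and hence $H_\ast(\Lambda_\infty)\in(0,1)$. Then $\psi_\infty<Q$ in $\{y<H_\ast(\Lambda_\infty)\}$, so $\ubar H\ge H_\ast(\Lambda_\infty)>0$. Alternatively, one can combine the linear growth estimate (Lemma~\ref{linear growth}) with the barrier to get the same conclusion, but the barrier is what is doing the work, not nondegeneracy alone.
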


The following proposition shows that the limiting solution $\psi:=\psi_\infty$ satisfies $\pt_x\psi>0$ inside the flow region. This property implies the negativity of the radial velocity $v$ (cf. \eqref{psi gradient}), which ensures that the streamlines have a simple topological structure.

\begin{proposition}\label{psix strictly pro}(\cite[Proposition 7.2]{L2023_axi_small}) 
The solution $\psi:=\psi_\infty$ obtained in Proposition \ref{psi determine} satisfies $\pt_{x}\psi>0$ in $\MO:=\O\cap\{\psi<Q\}$.
\end{proposition}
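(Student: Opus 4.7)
The plan is to derive a linear elliptic equation for $w:=\partial_x\psi$ and then apply the strong maximum principle. From Proposition \ref{psi determine}(ii) we already have $w\geq 0$ in $\Omega$, so it will suffice to set up the linearization and then exclude the possibility $w\equiv 0$ in the flow region $\MO$.

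First I would differentiate the first equation in \eqref{eq_limiting_sol} with respect to $x$. This is legitimate because $\psi\in C^{2,\alpha}(\MO)$ and the coefficient functions $g_\epsilon,\partial_zG_\epsilon$ are $C^{1,1}$ by Lemma \ref{lem:truncation_g}. Since the equation has no explicit $x$-dependence, a direct computation shows that $w$ satisfies
\[
a^{ij}\partial_{ij}w+b^i\partial_iw+cw=0 \quad\text{in }\MO,
\]
where $(a^{ij})$ is the symbol $g_\epsilon I_2+2\partial_tg_\epsilon(\nabla\psi/y)\otimes(\nabla\psi/y)$ evaluated at $(|\nabla\psi/y|^2,\psi)$, and $b^i,c\in L^\infty_{\rm loc}(\MO)$. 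The bounds \eqref{eq:g}--\eqref{eq:dzg} guarantee uniform ellipticity of $(a^{ij})$ throughout $\MO$ with ellipticity constants depending only on $\epsilon$.

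The strong maximum principle applied to the non-negative $w$ on the open connected set $\MO$ then yields the dichotomy: either $w>0$ everywhere in $\MO$, or $w\equiv 0$ in $\MO$. To exclude the latter I would argue by contradiction: if $w\equiv 0$, then $\psi$ depends only on $y$ in $\MO$ and, by continuity, on $\overline{\MO}$. For any $y_0\in(\ubar H,1)$ the free boundary point $(\Upsilon(y_0),y_0)\in\Gamma_\psi$ would then force $\psi(y_0)=Q$, while the interior point $(\Upsilon(y_0)-\delta,y_0)\in\MO$ for small $\delta>0$ must satisfy $\psi<Q$ by the very definition $\MO=\Omega\cap\{\psi<Q\}$. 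These two values cannot coincide, giving the required contradiction.

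The only technical obstacle is the linearization bookkeeping, namely verifying that the linearized operator is genuinely uniformly elliptic on $\MO$ and that its lower-order coefficients are locally bounded. Both points reduce directly to the quantitative estimates in Lemma \ref{lem:truncation_g}, so no new ideas are required beyond a careful computation of $\partial_x$ applied to the quasilinear operator.
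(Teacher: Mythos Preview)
Your argument is correct and follows the standard route used in the cited reference \cite{L2023_axi_small}: differentiate the $x$-translation-invariant quasilinear equation, apply the strong maximum principle to the nonnegative $w=\partial_x\psi$ on the connected set $\MO$, and exclude $w\equiv 0$ via the nontriviality of the free boundary. The only cosmetic point is that, with $\psi\in C^{2,\alpha}$ and $g_\epsilon$ merely $C^{1,1}$ in $z$, it is cleanest to cast the linearized equation in divergence form (or to argue via finite differences $\psi(\cdot+\tau,\cdot)-\psi$) so that the De~Giorgi--Nash--Moser Harnack inequality applies without needing $\psi\in C^3$; this is a routine adjustment and does not affect your strategy.
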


\subsection{Remove the subsonic truncation}
In this subsection, we remove the subsonic truncation introduced in Section \ref{subsec_subsonic}, provided the mass flux $Q$ is sufficiently large. 
After removing the subsonic truncation, the limiting solution obtained in Proposition \ref{psi determine} is actually a solution of \eqref{eq:pb2}. 
 
\begin{proposition}\label{subsonic pro}
	Suppose $Q>{Q}^*$ for some ${Q}^*\geq \tilde Q$ sufficiently large depending on $\bar u$, $\gamma$, $\epsilon$, and the nozzle, where $\tilde Q$ is defined in \eqref{def:Q_*}. Let $\psi:=\psi_\infty$ be a limiting solution obtained in Proposition \ref{psi determine}. Then
	\begin{equation}\label{subsonic}
		\left|\frac{\n\psi}{y}\right|^2\leq (1-\epsilon)\t_c(\B(\psi)),
	\end{equation}
	where $\t_c$ is defined in \eqref{tc}. 
\end{proposition}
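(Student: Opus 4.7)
The plan is to derive \eqref{subsonic} by pitting a linear‑in‑$Q$ upper bound for the momentum $|\n\psi/y|$ against the $O(Q^{(\g+1)/2})$ lower bound for the critical momentum $\sqrt{\t_c(\B(\psi))}$. Since $\g>1$, the latter grows faster, so choosing $Q^*$ large enough turns this size comparison into \eqref{subsonic}.

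For the lower bound of $\t_c(\B(\psi))$, the explicit formula \eqref{tc} together with $\B(\psi)\geq B_\ast$ (cf.~\eqref{eq:B}) and the scaling $B_\ast^{1/(\g-1)}\sim \bar\r = Q/\|y\bar u\|_{L^1}$ (cf.~\eqref{eq:rhobar_B_*}, \eqref{eq:rhobar}) immediately give
\[
\t_c(\B(\psi)) \;\geq\; c(\g)\, B_\ast^{(\g+1)/(\g-1)} \;\geq\; c(\g,\bar u)\, Q^{\g+1} \q\text{in }\MO.
\]

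For the upper bound $|\n\psi/y|\leq C(\g,\v,\bar u,\N)\,Q$ in $\overline{\MO}$, the Lipschitz estimate of Proposition \ref{Lipschitz} together with its boundary version Remark \ref{Lipschitz boundary}, combined with Corollary \ref{Ld determine} ($\Ld\leq CQ$), yields $|\n\psi|\leq CQ$ on bounded subdomains up to the $C^{1,\a}$ portions of $\pt\MO$. Off the symmetry axis this already controls $|\n\psi|/y$. On the axis, the axisymmetric stream function structure forces $\psi$ to vanish to second order in $y$ (indeed \eqref{psi gradient} gives $\pt_x\psi = -y\r v$ and $\pt_y\psi = y\r u$, both vanishing at $y=0$); introducing the auxiliary function $\Theta := \psi/y^2$ and applying Schauder estimates to the uniformly elliptic equation satisfied by $\Theta$ (ellipticity controlled by $\v$ via Lemma \ref{lem:truncation_g}) bounds $\|\Theta\|_{C^1}$, and hence $|\n\psi/y|\leq C\|\Theta\|_{C^1}\leq CQ$ up to the axis. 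The unbounded downstream part of $\MO$ is handled by combining the monotonicity $\pt_x\psi\geq 0$ from Proposition \ref{psix strictly pro} with a barrier built from the anticipated one‑dimensional downstream profile, reducing the estimate to a bounded strip.

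Combining the two, $|\n\psi/y|^2\leq C^2Q^2$ and $\t_c(\B(\psi))\geq c\,Q^{\g+1}$; picking $Q^*=Q^*(\g,\v,\bar u,\N)$ so large that $C^2(Q^*)^2\leq (1-\v)\,c\,(Q^*)^{\g+1}$ yields \eqref{subsonic} for all $Q>Q^*$. The main obstacle I anticipate is keeping the constant $C$ in $|\n\psi/y|\leq CQ$ independent of $Q$, uniformly up to and including the axis $\{y=0\}$ and in the unbounded downstream region. The axis estimate requires converting an $O(Q)$ Lipschitz bound on $\psi$ into an $O(Q)$ pointwise bound on the \emph{ratio} $\n\psi/y$ via the axisymmetric regularity of $\Theta$, and the downstream bound requires ruling out the formation of a sonic pocket at infinity, for which the monotonicity in $x$ together with a downstream barrier should suffice.
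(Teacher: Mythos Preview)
Your overall strategy---bound $|\n\psi/y|$ linearly in $Q$ and contrast with $\t_c(\B(\psi))\gtrsim Q^{\g+1}$---is exactly the paper's. Where you diverge is in how the gradient bound is obtained. The paper does not go through the variational Lipschitz estimate at all: it writes \eqref{eq_limiting_sol} in nondivergence form, records that the ellipticity ratio of the principal part is $\leq C_\g/\v$ and that the normalized source term satisfies $|\tilde G_\v|/\beta_{0,\v}\leq CQ/\v$, and then invokes \cite[Proposition~7.3]{L2023_axi_small}, which from these structural bounds delivers $\|\n\psi/y\|_{L^\infty(\overline{\MO})}\leq C(1+Q)$ in one stroke, already uniform across the axis and the unbounded ends. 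This buys the paper a clean, short proof; your route is more hands-on but requires you to supply the axis and far-field analysis yourself.

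Your axis step has a genuine gap: you justify the second-order vanishing of $\psi$ at $y=0$ via \eqref{psi gradient}, but in the present setting $(\r,u,v)$ are \emph{defined} from $\psi$ by \eqref{def:rhou_psi}, so ``$\r u,\r v$ bounded'' is literally the statement $|\n\psi/y|<\infty$ you are trying to prove---the reasoning is circular. The correct input is the comparison \eqref{psi bound}: since $\psi^\sharp_{\mu,R}(R,y)\leq C\max(\Ld,Q)\,y^2\leq CQ\,y^2$ near $y=0$ (using Corollary~\ref{Ld determine}), one gets $0\leq\psi\leq CQ\,y^2$ directly, which bounds $\Theta=\psi/y^2$ by $CQ$ without circularity and gives the Schauder step for $\Theta$ the right $Q$-scaling. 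A second, smaller issue: Proposition~\ref{Lipschitz} only applies on subdomains containing a free boundary point, so the upstream nozzle region (no free boundary) is not covered by it and must be handled by interior/boundary elliptic estimates for \eqref{eq_limiting_sol}; this is another place where the paper's nondivergence-form route is more economical.
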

\begin{proof}
	In the flow region $\mathcal O:=\Omega\cap\{\psi<Q\}$, $\psi$ satisfies the following equation of nondivergence form
	\begin{equation*}\label{eqnondiv}
		\mathfrak a_\epsilon^{ij}\bigg(\frac{\n\psi}{y},\psi\bigg) \pt_{ij}\psi=\mathfrak b\bigg(\frac{\n\psi}{y},\psi\bigg)+y^2\tilde{G}_\epsilon\bigg(\left|\frac{\n\psi}{y}\right|^2,\psi\bigg),
	\end{equation*}
	where the matrix
	$$(\mathfrak a_\epsilon^{ij}):=g_\epsilon\bigg(\left|\frac{\n\psi}{y}\right|^2,\psi\bigg)I_2+2\pt_t g_\epsilon\bigg(\left|\frac{\n\psi}{y}\right|^2,\psi\bigg)\frac{\n\psi}{y}\otimes\frac{\n\psi}{y}$$
	is symmetric with the eigenvalues
	\begin{align*}
		\beta_{0,\epsilon}  =g_\epsilon\bigg(\left|\frac{\n\psi}{y}\right|^2,\psi\bigg)\quad \text{and}\quad
		\beta_{1,\epsilon} =g_\epsilon\bigg(\left|\frac{\n\psi}{y}\right|^2,\psi\bigg)+2\pt_t g_\epsilon\bigg(\left|\frac{\n\psi}{y}\right|^2,\psi\bigg)\left|\frac{\n\psi}{y}\right|^2,
	\end{align*}
	and 
	\begin{align*}
		&\mathfrak b:=\beta_{1,\v}\frac{\pt_y\psi}{y},\q
		\tilde{G}_\epsilon:
		=-\pt_z g_\epsilon\bigg(\left|\frac{\n\psi}{y}\right|^2,\psi\bigg) \left|\frac{\n\psi}{y}\right|^2+\pt_zG_\epsilon\bigg(\left|\frac{\n\psi}{y}\right|^2,\psi\bigg).	
	\end{align*}

	It follows from Lemma \ref{lem:truncation_g}, the estimate of $\pt_zG_\v$ in \eqref{eq:upper_pzzG}, and \eqref{label_7} that there exist constants $C_\g=C_\g(\g)>0$ and $C=C(\g,\bar u)>0$ such that
	\begin{equation}\label{subsonic_eq}
		1\leq \frac{\beta_{1,\epsilon}}{\beta_{0,\epsilon}}\leq C_\gamma \epsilon^{-1}
		\quad\text{and}\quad
		\frac{|\tilde G_\epsilon|}{\beta_{0,\epsilon}}\leq CQ\epsilon^{-1}.
	\end{equation}
	With \eqref{subsonic_eq} at hand, one can use similar arguments as in \cite[Proposition 7.3]{L2023_axi_small} to get
	\begin{equation}\label{eq:subsonic}
	\left\|\frac{\n\psi}{y}\right\|_{L^{\infty}(\overline{\mathcal O})}\leq 
		C(1+Q),
	\end{equation}
	where $C$ depends on $\gamma,\epsilon,\bar u$, and the nozzle.
	Note that the right-hand side of \eqref{eq:subsonic} is $O(Q)$ for $Q\gg 1$,  and $\t_c(\mathcal B(\psi))=O(Q^{\gamma+1})$ for $Q\gg 1$. Since $\gamma>1$, we obtain the desired conclusion.
\end{proof}

\subsection{Far fields behavior}\label{sec asymptotic}
This subsection devotes to the asymptotic behavior of the jet flows as $x\rightarrow \pm\infty$. It plays an important role in proving the equivalence between the Euler system and the stream function formulation.

\begin{proposition}\label{pro_asymptotic_upstream} 
Given a mass flux $Q>{Q}^*$, where $Q^*$ is defined in Proposition \ref{subsonic pro}. 
	Let $\psi:=\psi_\infty$ be the solution obtained in Proposition \ref{psi determine} with $\Ld:=\Ld_\infty$. Then for any $\alpha\in(0,1)$, as $x\to-\infty$,
	\begin{equation}\label{psi upstream}
		\psi(x,y)\to\bar{\psi}(y):=\bar{\r}\int_0^{y}s\bar{u}(s)ds
		\q \text{in } C_{{\rm loc}}^{2,\alpha}(\R\times(0,\bar H)),
	\end{equation}
where $\bar\rho$ is the upstream density and $\bar u$ is the upstream axial velocity; as $x\to+\infty$,
	\begin{equation}\label{psi downstream}
		\psi(x,y)\to\ul\psi(y):=\ul\r\int_0^ys\ul u(s)ds,	\q \text{in } C_{{\rm loc}}^{2,\alpha}(\R\times(0,\ubar H)),
	\end{equation}
where $\ubar\rho>0$ is the downstream density, $\ubar u\in C^{1,\alpha}((0,\ubar H])$ is the downstream (positive) axial velocity, and $\ubar H>0$ is the downstream asymptotic height. Moreover, $\ubar\rho$, $\ubar u$ and $\ubar H$ are uniquely determined by $\bar u$, $Q$, $\bar H$, $\g$, and $\Ld$.
\end{proposition}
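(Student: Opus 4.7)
My approach is compactness-plus-uniqueness, reducing both far-field limits to one-dimensional boundary value problems for the stream function; the structural condition \eqref{det_0} of Proposition \ref{Gproperties pro} supplies the uniqueness. For a given sequence $x_n\to-\infty$ (resp.\ $+\infty$), set $\psi_n(x,y):=\psi(x+x_n,y)$. By Proposition \ref{psi determine}, the shifted functions eventually avoid the free boundary on any fixed compact subset of $\mathbb R\times(0,\bar H)$ upstream, or have domains exhausting $\mathbb R\times(0,\ubar H)$ downstream (since $f(x)\to\ubar H$ and $f'(x)\to 0$). The uniform ellipticity \eqref{beta1eps}, the Lipschitz bound of Proposition \ref{Lipschitz}, and Remark \ref{Lipschitz boundary} provide uniform $C^{2,\alpha}_{\rm loc}$ control, so a diagonal extraction yields $\psi_n\to\psi_\infty$ in $C^{2,\alpha}_{\rm loc}$, with $\psi_\infty$ solving \eqref{eq_limiting_sol} on the limiting strip. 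Translation invariance --- for every $a\in\mathbb R$, $\psi_\infty(x+a,y)=\lim_n\psi(x+a+x_n,y)=\psi_\infty(x,y)$ --- forces $\psi_\infty=\psi_\infty(y)$.

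Thus $\psi_\infty$ solves the one-dimensional reduction of \eqref{eq_limiting_sol}. Upstream, the boundary data are $\psi_\infty(0)=0$, $\psi_\infty(\bar H)=Q$; a direct computation using $\h(\bar\psi(y);\bar\rho)=y$ together with \eqref{eq:dB} verifies that $\bar\psi(y)=\bar\rho\int_0^y s\bar u(s)\,ds$ itself solves this BVP. Downstream, $\psi_\infty=\ubar\psi_\infty$ is defined on $(0,\ubar H)$ with $\ubar H$ unknown and must satisfy three conditions, $\ubar\psi_\infty(0)=0$, $\ubar\psi_\infty(\ubar H)=Q$, and the free-boundary relation $|\ubar\psi_\infty'(\ubar H)|/\ubar H=\Lambda$; from \eqref{Ld} one reads $\ubar\rho=(\gamma\ubar p)^{1/\gamma}$, and then $\ubar u(y):=\ubar\psi_\infty'(y)/(y\ubar\rho)$. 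Once uniqueness of each 1D problem is established, independence of the subsequence upgrades subsequential convergence to convergence of the entire family, proving \eqref{psi upstream} and \eqref{psi downstream}.

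For the uniqueness step I would test the difference $\phi_1-\phi_2$ of two candidate 1D profiles against itself in the weak formulation of the equation. The pointwise convexity \eqref{eq:convex0} yields a coercive quadratic form in $(\phi_1-\phi_2)'$, while the mixed and zeroth-order cross terms built from $\pt_{\bp z}\MG$ and $\pt_{zz}\MG$ are handled by \eqref{det_0}, which is exactly a Cauchy--Schwarz-type domination keeping the combined bilinear form nonnegative and forcing $\phi_1\equiv\phi_2$. The main obstacle I foresee is the downstream variant, where $\ubar H$ is itself an unknown free endpoint: one must first reduce two putative solutions (potentially on different intervals) to a common interval --- a shifting/rescaling argument in the spirit of Section \ref{sec uniqueness} --- before \eqref{det_0} can close the energy comparison and force both the endpoints and the profiles to coincide.
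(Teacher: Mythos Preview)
Your translation-invariance step contains a genuine gap. You write $\psi_\infty(x+a,y)=\lim_n\psi(x+a+x_n,y)=\psi_\infty(x,y)$; the first equality is just evaluation of the subsequential limit at the point $(x+a,y)$, but the second asserts that the shifted sequence $\{x_n+a\}$ produces the \emph{same} subsequential limit as $\{x_n\}$, which is exactly what has not yet been established. Without further input, different subsequences of translates could converge to different profiles, and the argument is circular. The repair is cheap but must be made explicit: invoke the monotonicity $\partial_x\psi\ge 0$ from Proposition~\ref{psi determine}(ii). Then for each fixed $y$ the map $x\mapsto\psi(x,y)$ is monotone, so the pointwise limit $\lim_{x\to-\infty}\psi(x,y)$ exists along the full family; combined with the uniform $C^{2,\alpha}$ bounds this upgrades to full (not merely subsequential) $C^{2,\alpha}_{\rm loc}$ convergence, and the limit is automatically independent of $x$. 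The same reasoning works downstream.

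For comparison, the paper does \emph{not} first reduce to one dimension. It passes to the two-dimensional limit problem on the strip (the Dirichlet problem \eqref{system upstream} upstream, the overdetermined Cauchy problem \eqref{system downstream} downstream) and proves uniqueness of the two-dimensional problem directly via the energy estimates of \cite{DD2016_axially_large_vorticity}, exploiting \eqref{det_0}; since the one-dimensional profile $\bar\psi$ solves that problem, uniqueness forces the limit to equal $\bar\psi$. Your reduce-to-1D-then-1D-uniqueness scheme is legitimate once the monotonicity patch above is inserted, and is arguably a bit more elementary. Two further points, however, are missing from your plan: (a) downstream $\ubar H$ is already fixed by Proposition~\ref{psi determine}(v), so there is no ``unknown free endpoint'' and the shifting/rescaling detour you anticipate is unnecessary; what does remain is to show that this $\ubar H$ is determined by the \emph{data} $(\bar u,Q,\bar H,\gamma,\Lambda)$, which the paper carries out via the streamline map $\theta$ in \eqref{theta}--\eqref{ubar_u_theta}; and (b) the strict positivity $\ubar u>0$ is part of the statement and requires its own argument --- the paper proves $\ubar u(0)>0$ by a monotonicity computation on $\ubar\rho\mapsto\Theta(\ubar\rho)$ ruling out $\ubar\rho\ge\bar\rho$ --- which your outline omits entirely.
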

\begin{proof}

(i) \emph{Upstream asymptotic behavior.} Let $\psi^{(-n)}(x,y): =\psi(x-n,y)$, $n\in\mathbb Z$. Since the nozzle is asymptotically horizontal with the height $\bar H$, there exists a subsequence $\psi^{(-n)}$ (relabeled) converges to a function $\hat\psi$ in $C_{\rm loc}^{2,\alpha}(\R\times[0,\bar H))$ for any $\alpha\in(0,1)$, where $\hat\psi$ satisfies \eqref{subsonic} and solves the Dirichlet problem in the infinite strip
\begin{equation}\label{system upstream}\begin{cases}
		\begin{split}
			&\n\c\bigg(g_\v\bigg(\bigg|\frac{\n\hat\psi}{y}\bigg|^2,\hat\psi\bigg)\frac{\n\hat\psi}{y}\bigg)-y\pt_z G_\v\bigg(\bigg|\frac{\n\hat\psi}{y}\bigg|^2,\hat\psi\bigg)=0 \q \text{ in } \R\times(0,\bar H),\\
			&\hat\psi=Q \ \text{ on } \R\times\{\bar H\},\q \hat\psi=0 \ \text{ on } \R\times\{0\}
\end{split}\end{cases}\end{equation}
and it satisfies $0\leq\hat\psi\leq Q$ in $\R\times[0,\bar H]$.
On the other hand, it follows from \eqref{det_0} and the energy estimates (cf. \cite[Proposition 3]{DD2016_axially_large_vorticity}) that the problem \eqref{system upstream} has a unique solution.
Since $\hat\psi$ satisfies \eqref{subsonic} and the function $\bar\psi$ defined in \eqref{psi upstream} satisfies the equation in \eqref{eq:pb2}, in view of \eqref{Gdz expression} and \eqref{gm_g} one has $\hat\psi(x,y)=\bar \psi(y)$ in $\R\times[0,\bar H]$.
This proves the asymptotic behavior of the flows at the upstream.

(ii) \emph{Downstream asymptotic behavior.} Let $\psi^{(n)}(x,y):=\psi(x+n,y)$, $n\in\mathbb Z$. By the $C^{2,\a}$ regularity of the free boundary and the boundary regularity for elliptic equations, there exists a subsequence $\psi^{(n)}$ (relabeled) converging to a function $\ubar\psi$ which satisfies \eqref{subsonic} and solves
\begin{equation}\label{system downstream}\begin{cases}\begin{split}
			&\n\c\bigg(g_\v\bigg(\bigg|\frac{\n\ubar\psi}{y}\bigg|^2,\ubar\psi\bigg)\frac{\n\ubar\psi}{y}\bigg)-y\pt_z G_\v\bigg(\bigg|\frac{\n\ubar\psi}{y}\bigg|^2,\ubar\psi\bigg)=0 &&\text { in } \R\times(0,\ul H),\\
			&\ubar\psi=Q,\q \frac1y\pt_{y}\ubar\psi=\Ld &&\text{ on } \R\times\{\ul H\},\\
			&\ubar\psi=0  && \text{ on }\R\times\{0\}
\end{split}\end{cases}\end{equation}
with $0\leq\ubar\psi\leq Q$ in $\R\times[0,\ul H]$.
It follows from \cite[Lemma 5.6]{L2023_axi_small} that the solution to the above Cauchy problem is unique for given positive constants $Q$, $\Ld$, and $\ul H$.
Besides, applying the energy estimates (cf. \cite[Proposition 3]{DD2016_axially_large_vorticity}) one can conclude that $\pt_{x}\ubar\psi=0$, that is, $\ubar\psi$ is a one-dimensional function.
Thus in view of \eqref{psi gradient}, the radial velocity $v\to0$ as $x\to\infty$. The downstream density $\ubar\rho$ (which is a constant by using similar arguments as in \cite[Remark 1.1]{CDXX2016_past_wall} and \cite[Remark 1.1]{L2023_axi_small}) and the downstream axial velocity $\ubar u$ are determined by
\begin{equation}\label{ubar_rho}
	\ubar\rho=\frac1{g(\Ld^2,Q)} \q\text{and}\q \ubar u(y)=\frac{\ubar\psi'(y)}{y\ubar\rho}
\end{equation}
respectively, where $g$ is the function defined in \eqref{eq:branch}. 
Consequently, $\ubar\psi$ can be expressed as in \eqref{psi downstream}. 

Note that $\ubar \rho \in (\varrho_c(\B(z)), \varrho_m(\B(z))]$ for all $z\in[0,Q]$, where $\varrho_c$ and $\varrho_m$ are defined in \eqref{defrhoc}. Hence $\ubar\rho\leq \varrho_m(B_*)$, where $B_*=\B(0)$ is the lower bound of the Bernoulli function $\B$, cf. \eqref{eq:B} and \eqref{eq:u0_eps0_B}. According to the Bernoulli law, one has 
\begin{equation}\label{ubar_lowerbd}
\ubar u(y)=\sqrt{2(\B(\ubar\psi(y))-h(\ubar{\rho}))}\geq\sqrt{2(\B(0)-h(\ubar{\rho}))}=\ubar u(0)\geq0.
\end{equation}
We claim that $\ubar u(0)>0$. Actually, suppose $\ubar u(0)=0$, then by the Bernoulli law one has 
$$\frac{\bar u^2(0)}2+h(\bar\rho)=\B(0)=h(\ubar\rho),$$
thus $\ubar\rho>\bar\rho$. If we can prove $\ubar\rho<\bar\rho$, then there is a contradiction so that the claim holds true. For this we let $\th(y)$ be the position at downstream if one follows along the streamline with asymptotic height $y$ at the upstream, i.e., $\th:[0,\bar H]\to[0,\ul H]$ satisfies
\begin{equation}\label{theta def}
	\ul\psi(\th(y))=\bar\psi(y),\q y\in[0,\bar H],
\end{equation}
where $\bar\psi$ and $\ul\psi$ are defined in \eqref{psi upstream} and \eqref{psi downstream}, respectively.
Then the map $\th$ satisfies
\begin{align}\label{theta}
	\begin{cases}
		\th'(y)=\frac{y\bar{\r}\bar{u}(y)}{\th(y)\ul{\r}\ul{u}(\th(y))},\\
		\th(0)=0,
\end{cases}\end{align}
where $\ubar\rho>0$ is given by \eqref{ubar_rho} and 
\begin{equation}\label{ubar_u_theta}
	\ubar u(\theta(y))=\sqrt{2(\B(\bar\psi(y)) - h(\ubar{\rho}))}.
\end{equation}
Calculating directly gives that the map
$$\ubar\rho\mapsto \Theta(\ubar\rho):=\int_{0}^{\bar H}\frac{y\bar{\r}\bar{u}(y)}{\th(y)\ul{\r}\sqrt{2(\B(\bar\psi(y)) - h(\ubar{\rho}))}}dy$$
is monotone increasing for $\ubar \rho \in (\varrho_c(\B(z)), \varrho_m(\B(z))]$. Since $\Theta(\bar\rho)>\bar H>\ubar H$ (note that $\theta(y)<y$ for $y\in(0,\bar H]$ since $v<0$, which is obtained from Proposition \ref{psix strictly pro} and \eqref{psi gradient}), one gets $\ubar\rho<\bar\rho$. Thus $\ubar u(0)>0$. In view of \eqref{ubar_lowerbd},  $\ubar u$ has a positive lower bound. 

Now substituting \eqref{ubar_u_theta} into \eqref{theta} yields that $\th(y)$, in particular $\ul H=\theta(\bar H)$, is uniquely determined. This finishes the proof of the proposition.
\end{proof}

In view of Propositions \ref{psi determine}--\ref{pro_asymptotic_upstream}, we obtain the existence of solutions to {Problem} \ref{pb2} when the mass flux $Q$ is sufficiently large. Then the existence of solutions to Problem \ref{probelm 1} follows from Remark \ref{rmk_pb2}.

\section{Uniqueness of the outer pressure}\label{sec uniqueness}
In this section, we show that for given axial velocity $\bar u$ satisfying \eqref{ubar_condition} and mass flux $Q>Q^*$ at the upstream, where $Q^*$ is defined in Proposition \ref{subsonic pro}, there is a unique momentum $\Ld$ on the free boundary such that Problem \ref{pb2} has a solution. In view of \eqref{Ld} and the proof of Lemma \ref{g}(i), the uniqueness of $\Ld$ implies the uniqueness of the downstream density $\ubar \rho$ and the outer pressure $\ul p$. More precisely, the constant $\ubar \r$ in \eqref{Ld} is determined by $\ubar\r=1/g(\Ld^2,Q)$, where $g$ is the function defined in Lemma \ref{g}(i), hence $\ul p=\ubar\r^\g/\g$ is uniquely determined.
\begin{proposition}\label{uniqueness pro}
Suppose that $(\psi_i,\G_i,\Ld_i)$ $(i=1,2)$ are two uniformly subsonic solutions to Problem \ref{pb2}, then $\Ld_1=\Ld_2$.
\end{proposition}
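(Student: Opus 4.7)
The plan is a shifting argument based on the comparison principle that the structural conditions \eqref{det_0} make available for the quasilinear equation \eqref{eq_limiting_sol}. Suppose for contradiction that $\Lambda_1\neq \Lambda_2$, and without loss of generality assume $\Lambda_1<\Lambda_2$.

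\emph{Step 1 (Ordering of downstream asymptotic heights).} First I would show that the map $\Lambda\mapsto\ubar H(\Lambda)$ is strictly monotone, so that $\ubar H_1\neq \ubar H_2$, and more precisely $\ubar H_1>\ubar H_2$. Indeed, by Proposition~\ref{pro_asymptotic_upstream} the downstream density $\ubar\rho_i=1/g(\Lambda_i^2,Q)$ and the downstream velocity $\ubar u_i$ are determined by $\Lambda_i$ through the Bernoulli law and the streamline correspondence \eqref{theta}, \eqref{ubar_u_theta}. Combined with mass flux conservation $Q=\int_0^{\ubar H_i} y\ubar\rho_i\ubar u_i(y)\,dy$, a larger boundary momentum produces a faster and denser downstream jet, hence a smaller cross-section. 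The monotonicity of the map $\ubar\rho\mapsto\Theta(\ubar\rho)$ used in Proposition~\ref{pro_asymptotic_upstream} makes this quantitative.

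\emph{Step 2 (Setting up the shift).} For $\tau\in\R$, let $\psi_1^\tau(x,y):=\psi_1(x-\tau,y)$, which solves the same equation \eqref{eq_limiting_sol} on the translated domain $\MO_1^\tau=\MO_1+(\tau,0)$, with free boundary $\G_1^\tau=\G_1+(\tau,0)$ and the same boundary gradient $\Lambda_1$. Since both $\psi_1$ and $\psi_2$ have the same upstream asymptotic $\bar\psi$, as $\tau\to+\infty$ the shifted solution $\psi_1^\tau$ converges locally to $\bar\psi(y)$ on any bounded set, while $\psi_2$ retains its free boundary near the orifice. On the other hand, as $\tau\to-\infty$, $\G_1^\tau$ is pushed far to the left while $\G_2$ stays in place, and since $\ubar H_1>\ubar H_2$, the flow region $\MO_1^\tau$ strictly contains $\MO_2$ on any strip $\{x>M\}$ for $M$ large. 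I would use these two regimes to show that the set
\[
T:=\{\tau\in\R:\psi_1^\tau\leq \psi_2 \text{ in }\MO_1^\tau\cap\MO_2\}
\]
is nonempty (take $\tau$ very negative, so $\psi_1^\tau\leq Q=\psi_2$ on $\pt\MO_2\cap\MO_1^\tau$ and $\MO_1^\tau\cap\{\psi_2<Q\}\subset\{\psi_1^\tau<Q\}$) and bounded above (for $\tau$ very large, the comparison is violated near the nozzle).

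\emph{Step 3 (The critical shift).} Let $\tau^\ast:=\sup T$. By the continuous dependence of $\psi_1^\tau$, $\G_1^\tau$ on $\tau$ (Proposition~\ref{psi determine}(ii)--(iii)), at $\tau^\ast$ one has $\psi_1^{\tau^\ast}\leq \psi_2$ in $\MO_1^{\tau^\ast}\cap\MO_2$, and equality is attained at some point $X_0\in\overline{\MO_1^{\tau^\ast}\cap\MO_2}$. The classification of $X_0$ splits into: (a) an interior touching point in $\MO_1^{\tau^\ast}\cap\MO_2$; (b) a touching point on the axis $\N_0$ or on a common portion of the nozzle boundary; (c) a touching point where the two free boundaries $\G_1^{\tau^\ast}$ and $\G_2$ meet.

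\emph{Step 4 (Deriving the contradiction).} Here the key tool is the comparison principle for \eqref{eq_limiting_sol}: writing $w=\psi_2-\psi_1^{\tau^\ast}\geq 0$ and subtracting the two equations, the structural conditions \eqref{det_0} (convexity of $\MG$ in $\bp$, non-negativity of $\pt_{zz}\MG$, and the determinant condition relating $\pt_{\bp z}\MG$ to the other Hessian entries) imply that $w$ satisfies a linear elliptic inequality of the form $L w\leq 0$ with coefficients controlled in $\mathcal O_1^{\tau^*}\cap \mathcal O_2$. In case (a), the strong maximum principle forces $w\equiv 0$ in the connected component containing $X_0$, hence $\psi_1^{\tau^\ast}\equiv\psi_2$ there, and by unique continuation across the entire overlap; but this forces $\Lambda_1=\Lambda_2$ via the free boundary condition $|\n\psi_i/y|=\Lambda_i$ on the common portion of the free boundaries, contradicting $\Lambda_1<\Lambda_2$. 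Case (b) is handled by the Hopf lemma, giving the same propagation of equality. In case (c), at the touching point of the free boundaries the functions $\psi_1^{\tau^\ast}$ and $\psi_2$ both equal $Q$, and the Hopf lemma applied to the non-negative solution $w$ of the linear inequality yields
\[
|\n\psi_2(X_0)|\geq |\n\psi_1^{\tau^\ast}(X_0)|,
\]
which after dividing by $y(X_0)$ gives $\Lambda_2\leq \Lambda_1$, again contradicting $\Lambda_1<\Lambda_2$.

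\emph{Main obstacle.} The principal difficulty is executing Step 4 rigorously: the equation is quasilinear and the inhomogeneous term depends on $\psi$, so the linear inequality satisfied by $w$ has coefficients involving integrated Hessians of $\MG$ along the segment from $\psi_1^{\tau^\ast}$ to $\psi_2$, and the usual maximum principle requires knowing these coefficients are bounded and the zeroth-order coefficient has the correct sign. This is exactly what the monotonicity $\pt_{zz}\MG\geq 0$ and the determinant condition in \eqref{det_0} are designed to provide, but in the touching case at the free boundary one must also verify that $X_0$ lies on a $C^1$ portion of both $\G_i$ so the Hopf lemma can be applied; this regularity comes from Proposition~\ref{psi determine}(iii) and the smooth-fit property, together with the fact that both free boundaries become asymptotically flat (Proposition~\ref{psi determine}(v)), ensuring the touching point is in the interior of the free boundaries rather than at infinity.
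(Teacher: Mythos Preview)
Your overall strategy---shift one solution, compare via the maximum principle enabled by \eqref{det_0}, and apply Hopf at a free-boundary touching point---is exactly the paper's approach. However, the direction of the shift in Step~2 is reversed, and this is not cosmetic. For the inequality $\psi_1^\tau\leq\psi_2$ to have any chance on $\MO_1^\tau\cap\MO_2$, every boundary point of this intersection off the axis must lie on $\pt\MO_2$, where $\psi_2=Q\geq\psi_1^\tau$; if instead a portion of $\G_1^\tau\cup\N^\tau$ meets the interior of $\MO_2$, then $\psi_1^\tau=Q>\psi_2$ there and the comparison fails outright. When $\tau\to-\infty$ the shifted free boundary $\G_1^\tau$ sits inside the nozzle region of $\MO_2$, so $\tau\notin T$; it is for $\tau$ large \emph{positive} that $\MO_1^\tau\supset\MO_2$ (using $\ubar H_1>\ubar H_2$ together with the $y$-graph property of $\N\cup\G_i$), and the critical shift should then be $\inf T$, not $\sup T$.

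The paper bypasses the set $T$ entirely: it defines $\tau_\ast$ directly as the smallest $\tau\geq 0$ with $\MO_1^{\tau}\supset\MO_2$, a purely geometric condition read off from the $y$-graphs. This guarantees the touching point lies on $\G_2\cap\G_1^{\tau_\ast}$, collapsing your Step~4 case analysis to case~(c) only. The inequality $\psi_1^{\tau_\ast}\leq\psi_2$ in $\MO_2$ is then argued separately: the two solutions share the upstream limit $\bar\psi$, their downstream one-dimensional limits satisfy $\ubar\psi_1\leq\ubar\psi_2$ on $[0,\ubar H_2]$ by comparison, and any interior positive maximum of $\psi_1^{\tau_\ast}-\psi_2$ would violate the strong maximum principle coming from \eqref{det_0}. (One minor slip in your case~(c): Hopf applied to $w=\psi_2-\psi_1^{\tau_\ast}\geq 0$ at its boundary minimum gives $\pt_\nu\psi_1^{\tau_\ast}>\pt_\nu\psi_2$, i.e.\ $\Lambda_1>\Lambda_2$; your displayed inequality has the wrong direction though your conclusion is the intended one.)
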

\begin{proof}
Let $(\ubar\rho_i, \ubar u_i, \ubar H_i)$, $i=1,2$, be the associated downstream asymptotic states. 
Without loss of generality, assume that $\Lambda_1 <\Lambda_2$. By Bernoulli's law, cf. \eqref{eq:psi}, along the free boundaries $\Gamma_1$ and $\Gamma_2$, one has 
	$$\frac{\Ld_1^2}{2\ul\r_1^2}+h(\ubar\rho_1)=\B(Q)
	=\frac{\Ld_2^2}{2\ul\r_2^2}+h(\ubar\rho_2).$$
	Since for subsonic solutions,  $\ubar\rho_i=\varrho(\Ld_i^2,Q)   (i=1,2)$ is monotone decreasing with respect to $\Ld_i$, where $\varrho$ is defined in \eqref{varrho_def}, one gets $\ul\r_1>\ul\r_2$. Let $\theta_i(y)$ ($i=1$, $2$) be defined in \eqref{theta} associated with $(\ubar\rho_i, \ubar u_i)$ ($i=1$, $2$). 
	It follows from \eqref{psi gradient} and \eqref{eq:psi} that
	$$\frac{(\ul\r_1\ul u_1)^2(\th_1(y))}{2\ul\r_1^2(\th_1(y))}+h(\ul\r_1(\th_1(y)))=\B(\bar\psi(y))
	=\frac{(\ul\r_2\ul u_2)^2(\th_2(y))}{2\ul\r_2^2(\th_2(y))}+h(\ul\r_2(\th_1(y))).$$
	Thus $\ubar\rho_i=\varrho((\ubar\rho_i\ubar u_i)^2(\theta_i(y)), \bar\psi(y)) (i=1,2)$.
	Using $\ubar \rho_1>\ubar\rho_2$ and the monotone decreasing property of $\varrho(t,z)$ with respect to $t$ again yields
	\begin{equation*}
		\ubar\rho_1\ubar u_1(\theta_1(y))<\ubar \rho_2\ubar u_2(\theta_2(y)).
	\end{equation*}
	In view of \eqref{theta} this implies
	\begin{equation}\label{theta ineq1}
	(\th_1^2(y))'>(\th_2^2(y))'.
\end{equation}
	Note that $\theta_1(0)=\theta_2(0)=0$. Integrating \eqref{theta ineq1} on {$[0,\bar H]$} one has 
	$\ubar H_1=\theta_1(\bar H)>\theta_2(\bar H)=\ubar H_2$.
	
Let $\MO_i$ be the domain bounded by $\N_0$, $\N$ and $\G_i$. Since $\ul H_1>\ul H_2$ and that $\N\cup \G_i$ is a $y$-graph, one can find a $\tau\geq0$ such that the domain $\MO_1^{\tau}=\{(x,y):(x-\tau,y)\in\MO_1\}$ contains $\MO_2$. Let $\tau_*$ be the smallest number such that $\MO^{\tau_*}_1$ contains $\MO_2$ and they touch at some point $(x_*,y_*)\in\Gamma_2$. Define  $\psi_1^{\tau_*}(x,y):=\psi_1(x-\tau_*,y)$. 
Now we prove $\psi_1^{\tau_*}\leq \psi_2$  in $\mathcal O_2$. Suppose that there exists a point $(\bar x,\bar y)\in\mathcal O_2$ such that $\psi_1^{\tau_*}(\bar x,\bar y)>\psi_2(\bar x,\bar y)$. Since $\psi_1^{\tau_*}$ and $\psi_2$ have the same asymptotic behavior as $x\to-\infty$, and $\psi_1^{\tau_*}\leq\psi_2$ as $x\to\infty$ by the comparison principle (cf. \cite[Theorem 10.7]{GT_book} and \eqref{det_0}), one can find a domain $\mathcal O_2'\subset\mathcal O_2$ containing $(\bar x,\bar y)$ such that $\psi_1^{\tau_*}-\psi_2$ obtains its maximum in the interior of $\mathcal O_2'$. This contradicts the strong maximum principle. Hence $\psi_1^{\tau_*}\leq\psi_2$ in $\MO_2$. Noting that at the touching point one has $\psi_1^{\tau_*}(x_*,y_*)=\psi_2(x_*,y_*)$, then by the Hopf lemma it holds
$$\Ld_1=\frac1{y_*}\frac{\pt\psi_1^{\tau_*}}{\pt\nu}(x_*,y_*)>\frac1{y_*}\frac{\pt\psi_2}{\pt\nu}(x_*,y_*)=\Ld_2.$$
This leads to a contradiction. Hence one has $\Ld_1=\Ld_2$. 
This completes the proof of the lemma.
\end{proof}

Combining all the results in previous sections, Theorem \ref{result} is proved.

\bibliographystyle{plain}

\end{document}